\newtheorem{thm}{Theorem}[section]
\newtheorem{Con}[thm]{Conjecture}
\newtheorem{Cor}[thm]{Corollary}
\newtheorem{Pro}[thm]{Proposition}
\newtheorem{cor}[thm]{Corollary}
\newtheorem{lem}[thm]{Lemma}
\newtheorem{pro}[thm]{Proposition}
\theoremstyle{definition}
\newtheorem{rem}[thm]{Remark}
\numberwithin{equation}{section}
\newcommand{\T}{\mathbf{t}}
\newcommand{\X}{\mathbb{X}}
\newcommand{\U}{\mathbf{u}}
\newcommand{\V}{\mathbf{v}}
\newcommand{\s}{\mathbf{s}}
\newcommand{\pr}{\mathbb{P}}
\newcommand{\ex}{\mathbb{E}}
\newcommand{\F}{\mathcal{F}(x)}
\newcommand{\G}{\mathcal{G}}
\newcommand{\J}{\mathcal{J}}
\newcommand{\ep}{\varepsilon}
\begin{document}

\baselineskip=16pt

\title[Sign changes of character sums and zeros of Fekete polynomials]{Sign changes of short character sums and real zeros of Fekete polynomials}

\author{Oleksiy Klurman}
\address{School of Mathematics, University of Bristol, UK}

\email{oleksiy.klurman@bristol.ac.uk}

\author{Youness Lamzouri}

\address{
Universit\'e de Lorraine, CNRS, IECL,  and  Institut Universitaire de France,
F-54000 Nancy, France}

\email{youness.lamzouri@univ-lorraine.fr}

\author{Marc Munsch}
\address{Université Jean Monnet, Institut Camille Jordan, 42 100 Saint-Etienne, France. }
\email{marc.munsch@univ-st-etienne.fr}

\date{}

\subjclass[2010]{Primary 11L40, 11M06, 26C10,  30C15; Secondary 11M20.}

\keywords{Dirichlet characters, $L$-functions, random model, discrepancy, multiplicative functions, Littlewood polynomials.}

\maketitle

\begin{abstract}
We discuss a general approach producing quantitative bounds on the number of sign changes of the weighted sums 
\[\sum_{n\le x}f(n)w_n\]
where $f:\mathbb{N}\to \mathbb{R}$ is a family of multiplicative functions and $w_n\in\mathbb{R}$ are certain weights.\\
As a consequence, we show that for a typical fundamental discriminant $D,$ the partial sums of the real character $\chi_D$ change sign $\gg (\log\log D)/\log\log\log \log D$ times on a {\it very short} initial interval (which goes beyond the range in Vinogradov's conjecture).\\ 
We also prove that the number of real zeros (localized away from $1$) of the Fekete polynomial associated to a typical fundamental discriminant $D$ is $\gg \frac{\log\log D}{\log\log\log\log D}.$ This comes close to establishing a conjecture of Baker and Montgomery which predicts $\asymp \log \log  D$ real zeros.\\ 
Finally, the same approach shows that almost surely for large $x\ge 1$, the partial sums 
$\sum_{n\le y}f(n)$ of a (Rademacher) random  multiplicative function exhibit $\gg \log \log x/\log \log\log\log x$ sign changes on the interval $[1,x].$\\
These results rely crucially on uniform quantitative estimates for the joint distribution of $-\frac{L'}{L}(s, \chi_D)$ at several points $s$ in the vicinity of the central point $s=1/2$, as well as concentration results for $\log L(s,\chi_D)$ in the same range, which we establish.\\
In the second part of the paper, we obtain, for large families of discriminants, ``non-trivial" upper bounds on the number of real zeros of Fekete polynomials, breaking the square root bound. Finally, we construct families of discriminants with associated Fekete polynomials having no zeros away from $1.$

\end{abstract}

\tableofcontents
\section{Introduction}
The main objective of the present paper is to illustrate a general approach producing quantitative bounds on the number of sign changes of the weighted sums 
\[\sum_{n\le x}f(n)w_n\]
where $f:\mathbb{N}\to \mathbb{R}$ is a family of multiplicative functions and $w_n\in\mathbb{R}$ are certain weights. In what follows, we shall focus on the case $f(n)=\chi_D(n)$ is a real Dirichlet character.\\ 
Our first application is related to a relatively ``thin'' family of polynomials, namely the so-called Fekete polynomials.
To this end, we recall that the Fekete polynomial of degree $\vert D\vert-1$ is defined by
$$F_D(z):=\sum_{n=1}^{\vert D\vert-1} \chi_D(n)z^n,$$ where $D$ is any fundamental discriminant\footnote{Recall that $D$ is a fundamental discriminant if $D$ is square-free and $D\equiv 1\pmod 4$, or $D=4m$ where $m\equiv 2\text{ or } 3 \pmod 4$ and $m$ is square-free.} and $\chi_D$ the associated real primitive Dirichlet character.
Erd\H{o}s and Littlewood \cite{littlewood1968some} raised various extremal problems concerning polynomials with coefficients $\pm 1$. The family of Fekete polynomials appears frequently in this context  (see for example \cite{BC}, \cite{borwein2001extremal}, \cite{M-N-T-1}, \cite{FeketePolya}, \cite{Hoholdtmerit}, \cite{JKS}, \cite{jedwab2013littlewood}, \cite{JHH}, \cite{K-L-M}, \cite{M-N-T}, \cite{MR4490459}, \cite{Mont-large}) and has been extensively studied for over a century \cite{FeketePolya}.
Dirichlet was the first to discover the link between a Fekete polynomial and its associated Dirichlet $L$-function:
\begin{equation}\label{Dirichlet-identity}L(s,\chi_D)\Gamma(s)=\int_0^1 \frac{(-\log x)^{s-1}}{x}\frac{F_D(x)}{1-x^D}dx, \text{ for } \Re(s)>0.\end{equation}
Hence if $F_D$ has no zeros for $0<x<1,$ then $L(s,\chi_D)>0$ for all $0<s<1,$ which in turn refutes the existence of a putative Siegel zero (note that the positivity of $L(s,\chi_D)$ on $(0,1)$ is known only  for a positive proportion of discriminants by a result of Conrey and Soundararajan \cite{ConreySound}).  

Here we recall an amusing story: a quick computation shows that there is only a few small discriminants $D$ for which $F_D$ has a zero in $(0,1)$ (see \cite{Lounumerics} for a recent numerical study).  This led Fekete to conjecture that $F_D$ does not have  zeros in $(0,1)$ 
for sufficiently large $D.$ Such a statement has been shortly disproved by P\'olya~\cite{Pol}. Twenty years later, unaware of Fekete's conjecture and its disproof, Chowla~\cite{Chow} made the same conjecture which was again disproved by Heilbronn~\cite{Heilbr} a year later.

\subsection{Lower bounds on the number of real zeros of Fekete polynomials}

Let $N_D$ be the number of zeros of $F_D$ in $(0,1)$. Baker and Montgomery~\cite{BaMo} showed that, in fact, Fekete's conjecture fails in a strong form. More precisely, they proved that for every fixed integer $K\geq 1$, we have $N_D\geq K$ for almost all fundamental discriminants $D$.
Moreover they formulated the following guiding conjecture.
\begin{Con}\label{conjBM}\cite[Baker-Montgomery]{BaMo}
For almost all fundamental discriminants $D$, 
$$N_D \asymp \log \log |D|.$$
\end{Con}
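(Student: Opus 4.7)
The plan is to connect zeros of $F_D$ in $(0,1)$ to sign changes of the partial sums $S_D(y) := \sum_{n \leq y} \chi_D(n)$, and then to produce $\asymp \log\log|D|$ sign changes of $S_D$ on a short initial interval by exploiting fine joint distribution data for $L(\cdot,\chi_D)$ near the central point. Abel summation gives
$$F_D(x) = (1-x)\sum_{n \leq |D|-1} S_D(n)\, x^n + S_D(|D|-1)\, x^{|D|-1},$$
and, heuristically, for $x_j = 1 - 1/y_j$ the right-hand side is dominated by the block of indices $n \asymp y_j$. Hence, if $S_D$ changes sign across well-separated scales $y_1 < y_2 < \cdots < y_K$ with $y_{j+1}/y_j$ bounded, then $F_D(x_j)$ inherits sign changes as $j$ varies, producing $\gtrsim K$ zeros. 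The task therefore reduces to showing that $S_D$ has $\gg \log\log|D|$ sign changes on an initial interval of length $|D|^{o(1)}$ for almost all fundamental discriminants $D$.

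To produce these sign changes I would use a truncated Mellin--Perron representation: pushing the contour close to $\Re s = 1/2$ gives
$$S_D(e^\eta) \approx \sqrt{e^\eta}\cdot \Phi_D(\eta),$$
where $\Phi_D(\eta)$ is a functional built from values of $\log L(1/2+it,\chi_D)$ (equivalently from $-L'/L$) integrated against an explicit kernel. I would then choose $K \asymp \log\log|D|$ widely spaced points $\eta_1<\cdots<\eta_K$ and show, as $D$ ranges over fundamental discriminants up to $X$, that the signs of the $\Phi_D(\eta_j)$ behave like asymptotically independent Bernoulli$(1/2)$ random variables. The two required inputs are:
\begin{enumerate}[(i)]
\item a uniform quantitative bound comparing the joint distribution of $\bigl(-L'/L(s_j, \chi_D)\bigr)_{j\le K}$ at points $s_j$ tending to $1/2$ with that of an independent Gaussian model, valid for $K$ growing with $|D|$;
\item a concentration inequality for $\log L(s,\chi_D)$ in the same region, which certifies that a typical $\chi_D$ is not anomalously close to a low-lying zero (so that the Mellin approximation above is valid and $\Phi_D$ is well-defined).
\end{enumerate}
With (i)+(ii) in hand, a second-moment computation for the indicator ``$\Phi_D$ changes sign on $[\eta_j,\eta_{j+1}]$'' summed over $j$ yields $\gg K$ sign changes with probability $1-o(1)$, which transfers to the desired lower bound on $N_D$ via the Abel summation argument above.

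The main obstacle is driving $K$ all the way up to $\asymp\log\log|D|$. Both the joint distribution bound in (i) and the concentration bound in (ii) currently lose a factor of order $\log\log\log\log|D|$ when pushed to this range, which is precisely what separates the almost-sharp lower bound $N_D\gg \log\log|D|/\log\log\log\log|D|$ announced in the abstract from the conjectured $\asymp \log\log|D|$. Removing this loss would require a sharp functional CLT for $s\mapsto \log L(s,\chi_D)$ on a mesoscopic interval around $s=1/2$, uniformly across $D$, which seems to lie at the edge of current techniques. The matching \emph{upper} bound $N_D\ll\log\log|D|$, also part of the conjecture, appears to require an orthogonal input: a total-variation estimate for $F_D$ on $(0,1)$ derived from $L(s,\chi_D)$ on a vertical segment close to the critical line, perhaps together with a sieve controlling low-lying zeros of $L(\cdot,\chi_D)$.
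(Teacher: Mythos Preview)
This statement is a \emph{conjecture}; the paper does not prove it, and your proposal does not either --- as you yourself acknowledge when you identify the $\log_4|D|$ loss in the lower bound and note that the upper bound requires a separate, unspecified input. So there is no proof to compare against.

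On the lower-bound side, your strategy and the paper's share the same two key ingredients --- a uniform joint-distribution estimate for $-L'/L(s_j,\chi_D)$ at several $s_j$ near $1/2$ (the paper's Theorem~\ref{Discrepancy}) and a concentration bound for $\log L(s,\chi_D)$ (Proposition~\ref{CLTLogL}) --- and your diagnosis of where the $\log_4|D|$ loss enters is essentially accurate. However, your proposed \emph{transfer} from sign changes of $S_D$ to zeros of $F_D$ via Abel summation is not rigorous as written: evaluating $F_D(x)=(1-x)\sum_n S_D(n)x^n+\cdots$ at $x=1-1/y$ yields a weighted average of $S_D(n)$ over a full range $n\asymp y$, and there is no reason its sign should track $S_D(y)$ at a single scale, especially with no control on $S_D$ between the widely separated $y_j$. (Descartes' rule goes the wrong way here: it bounds the number of zeros \emph{above} by the number of sign changes of $S_D$, not below.) The paper avoids this step entirely: it works directly with the identity \eqref{Eq.IdentityLaplaceFekete} expressing $L\Gamma\cdot(L'/L+\Gamma'/\Gamma)$ as a Laplace-type integral against $F_D(e^{-t})$, produces sign changes of the left-hand side from (i)+(ii), truncates the integral (Proposition~\ref{pro.TruncationLaplaceFek}), and then applies the variation-diminishing property of the Laplace transform (Lemma~\ref{lem:Karlin}) to force $F_D$ itself to change sign on a short interval near $1$. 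This yields $N_D\gg(\log_2|D|)/\log_4|D|$ (Theorem~\ref{MainFek}), still short of the conjecture.
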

 A similar conjecture for prime discriminants has also been mentioned by Conrey, Granville, Poonen and Soundararajan in~\cite{CGPS}. 
 Our first result comes close to establishing the implicit lower bound in Conjecture \ref{conjBM} in a stronger form, where we localize the number of zeros in a short
 interval close to $1$. Throughout, we let $\log_k(x)$ denote the $k$-th iterate of the natural logarithm.
 \begin{thm}
\label{MainFek}
Let $0<\alpha<1/20$ and $A\geq 1$ be  fixed constants. For all except for a set of size $O\big(x\exp(-(\log_3 x)^A)\big)$ fundamental discriminants $0<D\leq x$,  the corresponding Fekete polynomial $F_D$ has $$\gg_{\alpha,A} \frac{\log_2 D}{\log_4 D}$$ zeros in the interval  $\big(1-e^{-(\log D)^{\alpha/100}},1-e^{-(\log D)^{\alpha}}\big)$.
\end{thm}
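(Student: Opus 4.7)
The plan is to convert the real-zeros count for $F_D$ into a sign-change count for a smoothed character sum, and then exploit the joint distribution of $L(s,\chi_D)$ at several points near $s=1/2$ advertised in the abstract.

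First, substituting $x=e^{-u}$ in Dirichlet's identity \eqref{Dirichlet-identity} expresses $L(s,\chi_D)\Gamma(s)$ as the Mellin transform of $F_D(e^{-u})/(1-e^{-Du})$; for $x$ in the target window, $u\asymp 1-x\in(e^{-(\log D)^{\alpha}},e^{-(\log D)^{\alpha/100}})$ is so small that $Du\gg D\cdot e^{-(\log D)^{\alpha}}$ is enormous, whence $(1-e^{-Du})^{-1}=1+O(e^{-Du})$. Inverting the Mellin transform along $\mathrm{Re}(s)=1/2$ yields
$$F_D(e^{-u})=u^{-1/2}\mathcal{G}_D(\log(1/u))+O(e^{-Du/2}),$$
where $\mathcal{G}_D(v):=\frac{1}{2\pi}\int_{-\infty}^{\infty}L(1/2+it,\chi_D)\Gamma(1/2+it)e^{itv}\,dt$ is real. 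Hence sign changes of $F_D(e^{-u})$ in the target interval correspond (up to the positive factor $u^{-1/2}$) to sign changes of $\mathcal{G}_D(v)$ for $v=\log(1/u)\in\bigl((\log D)^{\alpha/100},(\log D)^{\alpha}\bigr)$.

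Next, pick $K=\lfloor c\log_2 D/\log_4 D\rfloor$ sample points $v_1<\dots<v_K$ spaced by roughly $(\log D)^{\alpha}/K$ in that interval; it suffices to show that, off an exceptional set of $D$'s of size $O(x\exp(-(\log_3 x)^A))$, the sign sequence $(\mathrm{sgn}\,\mathcal{G}_D(v_k))_k$ has $\gg K$ flips, since any flip forces a zero of the continuous $\mathcal{G}_D$ by the intermediate value theorem. The rapid Gamma decay $|\Gamma(1/2+it)|\ll e^{-\pi|t|/2}$ truncates the defining integral to a small window $|t|\le T_0:=(\log_3 D)^{2A}$, and inside that window the concentration estimates for $\log L(s,\chi_D)$ established earlier in the paper let us replace $L(1/2+it,\chi_D)$ by a short random-like Euler-product proxy, off an exceptional set of the right size.

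The main analytic step is then to invoke the paper's quantitative joint distribution theorem for $-L'/L(s,\chi_D)$, equivalently (after integrating in the imaginary direction) for $\log L(s,\chi_D)$, at several points in this window, to conclude that the rescaled vector $(\mathcal{G}_D(v_k)/\sigma)_{k\le K}$ is close in distribution to a centered Gaussian vector. With the $v_k$'s well-spaced, the oscillatory factors $e^{itv_k}$ make the off-diagonal covariances $\int|L(1/2+it)|^2|\Gamma(1/2+it)|^2e^{it(v_k-v_{k'})}\,dt$ strictly smaller than the diagonal, so the covariance matrix is close to $I_K$ and the $K$ signs behave like independent symmetric $\pm1$ variables. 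A Chernoff bound then produces $\gg K$ sign changes except with probability $\ll\exp(-cK)\le\exp(-(\log_3 D)^A)$, matching the exceptional-set budget; undoing the change of variable localizes the corresponding zeros of $F_D$ in the target interval.

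The main obstacle is this last step: producing a \emph{uniform} joint near-Gaussian approximation for $\log L(s,\chi_D)$ at $K$ points simultaneously with exceptional-set mass $\le x\exp(-(\log_3 x)^A)$. The admissible $K$ is capped at $\asymp\log_2 D/\log_4 D$ because the tails of $\log L(1/2,\chi_D)$ are truly Gaussian only up to scale $(\log_2 D)^{1/2}$, and the $K$-point joint distribution forces control of joint moments of order growing with $K$; this loss is also the source of the gap between the theorem proved here and the Baker--Montgomery prediction of $\asymp\log_2 D$ zeros.
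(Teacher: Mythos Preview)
Your approach diverges substantially from the paper's, and as written it contains genuine gaps.

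\textbf{What the paper actually does.} The paper does not invert Mellin on the critical line. Instead it \emph{differentiates} Dirichlet's identity in $s$ to obtain \eqref{Eq.IdentityLaplaceFekete}, which expresses the Laplace-type integral $\int_0^\infty F_D(e^{-t})(1-e^{-Dt})^{-1}t^{s-1}\log t\,dt$ as $L(s,\chi_D)\Gamma(s)\bigl(L'/L+\Gamma'/\Gamma\bigr)(s)$. It then samples this at \emph{real} points $s_r=1/2+M^{-3r}$ with $M=(\log_3 x)^A$. The logarithmic derivative is the whole point: by Proposition~4.1, $-L'/L(s_r,\chi_D)$ is, for almost all $D$, within $1/(s_r-1/2)$ of a short prime sum $\sum_{u_r<p<v_r}\chi_D(p)(\log p)p^{-s_r}$, and the intervals $(u_r,v_r)$ are chosen \emph{disjoint}. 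This disjointness is the source of independence in the random model and is what drives the Gaussian approximation (Theorem~5.1, Theorem~5.3). Proposition~4.5 ensures $L(s_r,\chi_D)$ is of size $(s_r-1/2)^{-1/2+o(1)}$, so the sign of the right side of \eqref{Eq.IdentityLaplaceFekete} is dictated by $L'/L$. Proposition~7.2 truncates the $t$-integral to $[e^{-Y_2},e^{-Y_1}]$ at an acceptable cost. Finally, Karlin's sign-change lemma (Lemma~6.2) transfers the $\gg R$ sign changes of the Laplace transform in $s$ to $\gg R$ sign changes of the integrand in $t$, hence of $F_D$ in the target interval.

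\textbf{Where your argument breaks.} First, the paper's inputs on $\log L$ and $-L'/L$ (Propositions~4.1 and~4.5, and the discrepancy Theorem~5.1) are stated and proved only for \emph{real} $s$ with $s-1/2\in[(\log x)^{-1/5},(\log\log x)^{-2}]$; they rely on Lemma~4.2, which uses zero-density to produce a Dirichlet polynomial approximation valid in that real range. You invoke them at $s=1/2+it$ on the critical line, where they are not available. Second, your truncation to $|t|\le T_0=(\log_3 D)^{2A}$ is not justified: the convexity bound gives $|L(1/2+it,\chi_D)|\ll D^{1/4}(1+|t|)^{1/4}$, so the tail $\int_{|t|>T_0}|L\Gamma|\,dt$ is of size $D^{1/4}e^{-cT_0}$, which swamps any signal since $T_0$ is only a power of $\log_3 D$. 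Third, the step ``$(\mathcal{G}_D(v_k)/\sigma)_k$ is close to a centered Gaussian with near-identity covariance'' is the heart of the matter and is not supplied by anything in the paper: your $\mathcal{G}_D(v)$ is an integral of $L$-values along the critical line, not a short Dirichlet polynomial over primes, so the paper's moment-matching (Lemma~5.3, Proposition~5.4) does not apply, and the oscillatory-covariance heuristic is not a proof. The paper sidesteps all of this by working with $L'/L$ at real $s$, where the prime-sum approximation with disjoint supports manufactures the needed independence directly.
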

\begin{rem} \
\begin{itemize}
\item Using the proof of the second part of Theorem \ref{Main} below, we could obtain a smaller exceptional set in Theorem \ref{MainFek}, at the cost of replacing the number of zeros by $(\log_2 D)/\log_3 D$ (see the second part of Theorem \ref{ThmPartialSumsPositive} below for an analogous statement).
\item It is natural to compare Conjecture \ref{conjBM} with the well-studied case of 
random polynomials $P_{n,\xi}(t):=\sum_{k=0}^{n} \xi_k t^k$ where $\xi_k$ are independent identically distributed copies of a real random variable $\xi$ with mean zero. 
Erd\H{o}s and Offord~\cite{EO} famously proved that in the Bernoulli case, i. e. with coefficients given by independent random variables taking the values $\pm 1,$ the expected number of real zeros is 
\begin{equation}\label{Kac}
 N(P)=\frac{2}{\pi}\log n + o (\log^{2/3} (n) \log \log n)
 \end{equation}
 with probability  $1+o_{n\to\infty}(1).$
 On the other hand, it is a rather challenging task to give upper and lower bounds for the number of real zeros for an individual deterministic Littlewood-type polynomial. We mention here a well-known result of Borwein, Erd\'elyi and K\'{o}s  \cite{ErdBor}, showing a sharp bound $O(\sqrt{n})$ for the number of zeros in $(-1,1)$ of 
any polynomial
$$p(z)=\sum_{j=0}^{n}a_j z^j, \hspace{2mm} \text{ such that }\vert a_j\vert \leq 1, \vert a_0\vert =  \vert a_n\vert =1, a_j \in \mathbb{C}.$$
\item  
It is widely believed 
that a 
Dirichlet character modulo $\vert D\vert$ is ``almost" determined by its values at the first $(\log |D|)^{1+o(1)}$ primes, 
and thus it seems conceivable, in line with Conjecture \ref{conjBM}, that the number of real zeros in this case could be substantially smaller than in the random case (compared with \eqref{Kac}).
\end{itemize}
\end{rem}

\subsection{Sign changes of character sums}

We now let $\F$ be the set of fundamental discriminants $|D|\leq x$. 
An important problem in analytic number theory is to gain an understanding of the statistical behaviour of the character sums
  $$ S_{\chi_D}(N)=  \sum_{1 \leq n \leq N} \chi_D(n).$$ For example, Granville and Soundararajan \cite{G-S-max-character}, Bober, Goldmakher, Granville and
Koukoulopoulos \cite{BGGK} and Lamzouri \cite{Lamzouri} investigated the behaviour of large character sums.
Harper \cite{Harper-short} recently showed that $S_{\chi}(N)$ typically exhibits below square root cancellations $o(\sqrt{N})$ in the family of complex characters $\chi$ modulo a fixed large prime.
An interesting recent paper by Hussain and Lamzouri \cite{AyeshaLamzouri} deals with the distribution of $S_{\chi_p}(tp)$ for a randomly selected $t\in [0,1]$, when $p$ varies over the primes in a dyadic interval $[x, 2x]$ and $x$ is large.\\

A well known conjecture of Vinogradov asserts that the first sign change in the sequence $\chi(1),\chi(2),\dots,\chi(|D|-1)$ should occur among the first $N\le |D|^{\varepsilon}$ terms. Our objective here is a more refined classical question which can be traced back to Chowla, Fekete and others.
\begin{center}
\textbf{\underline{Question}}: What can be said about sign changes in the sequence of partial sums $$\left\{S_{\chi_D}(1),S_{\chi_D}(2),\dots,S_{\chi_D}(|D|-1)\right\}?$$
\end{center}
Baker and Montgomery \cite{BaMo} proved that for $100\%$ of fundamental discriminants $|D|\le x,$ $S_{\chi_D}(N)< 0$ for at least one $0<N\le |D|-1.$
Exploring the connection to sign changes of partial sums $\sum_{n\leq x} f(n)$ of a Rademacher random multiplicative function, Kalmynin \cite{Kalmynin} showed that the relative proportion of prime discriminants $p \le x$ for which $S_{\chi_p}(N)\ge 0$ for {\it all } $0<N\le p-1$ is $\ll 1/(\log \log x)^{c}$ for $c\approx 0.03$ and conjectured that this proportion should be of size $\approx 1/(\log \log x)^{1-o(1)}$. See also \cite{Angelo-Xu}, \cite{Aymone} and \cite{Kerr-Klurman}
for recent related works on sign changes of partial sums of random multiplicative functions. Here we prove the following quantitative result. 
  \begin{thm}\label{ThmPartialSumsPositive}
  Let $0<\alpha<1/20$ and $A\geq 1$ be  fixed constants.
  \begin{itemize}
 \item[1.] For all except for a set of size $O\big(x\exp(-(\log_3 x)^A)\big)$ fundamental discriminants $|D|\leq x$,  $S_{\chi_D}(N)$ has at least $\gg_{\alpha, A} (\log_2
|D|)/\log_4|D|$ sign changes in the interval $e^{(\log |D|)^{\alpha/100}}\leq N\leq e^{(\log |D|)^{\alpha}}$. 

\item[2.] There exists a positive constant $c$ such that for all except for a set of size $O\big(x\exp(-c\log_2(x)/\log_3(x))\big)$ fundamental discriminants $|D|\leq x$,  $S_{\chi_D}(N)$ has at least $\gg_{\alpha, A} (\log_2
|D|)/\log_3|D|$ sign changes in the interval $e^{(\log |D|)^{\alpha/100}}\leq N\leq e^{(\log |D|)^{\alpha}}$.
\end{itemize}
\end{thm}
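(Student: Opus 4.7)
The plan is to prove Part 1; Part 2 will follow by loosening the sampling grid and invoking the joint distribution estimate in a sharper quantitative form.

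\textbf{Step 1 (Sampling grid).} Fix $\alpha\in(0,1/20)$ and $A\ge 1$, and set $K=\lfloor c(\alpha,A)(\log_2 D)/\log_4 D\rfloor$. I would choose $y_1<y_2<\cdots<y_K$ inside $[(\log D)^{\alpha/100},(\log D)^{\alpha}]$ with multiplicative spacing $y_{k+1}/y_k\ge(\log_3 D)^{A_0}$ for a sufficiently large constant $A_0=A_0(A)$; such a grid fits because $\log(y_K/y_1)\asymp\log_2 D$, whereas $K\log\log_3 D\ll\log_2 D$. Set $N_k:=e^{y_k}$ and $\sigma_k:=1/y_k$, so that $\sigma_k\in[(\log D)^{-\alpha},(\log D)^{-\alpha/100}]$ lies above the Montgomery scale $1/\log D$ but is still well inside the critical strip.

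\textbf{Step 2 (Sign-determining functional).} Applying Perron's formula to a smooth version of $S_{\chi_D}(N_k)$ and shifting the contour towards $\operatorname{Re}(s)=1/2$ expresses $S_{\chi_D}(N_k)/\sqrt{N_k}$ as a short-window linear functional of $L(1/2+it,\chi_D)$ in the spectral variable $t$. An integration-by-parts/stationary-phase analysis, combined with the concentration estimate for $\log L(1/2+\sigma_k,\chi_D)$ announced in the abstract, shows that for typical $D$ the sign of this functional agrees with the sign of
\[
R_k(\chi_D)\;:=\;\operatorname{Re}\!\Bigl(\!-\tfrac{L'}{L}(1/2+\sigma_k,\chi_D)\Bigr),
\]
up to an error term that is negligible outside a thin exceptional set of discriminants.

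\textbf{Step 3 (Joint distribution and sign alternations).} Since the $\sigma_k$'s are well separated on a multiplicative scale, one then invokes the quantitative joint-distribution estimate for $\bigl(-\tfrac{L'}{L}(1/2+\sigma_k,\chi_D)\bigr)_{k=1}^{K}$: up to an exceptional set of size $O(x\exp(-(\log_3 x)^{A}))$, this vector is close to a tuple of independent centred real Gaussians of variance of order $1/\sigma_k$. A Chernoff-type bound on the sign pattern of such a Gaussian vector produces $\gg K=(\log_2 D)/\log_4 D$ alternations in $(\operatorname{sign} R_k(\chi_D))_{k=1}^{K}$ outside an exceptional set well within the claimed budget; the intermediate sign changes of $S_{\chi_D}(N)$ between consecutive $N_k$'s are then recovered from Step 2, finishing Part 1. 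Part 2 is proved along the same lines, decreasing $K$ to $(\log_2 D)/\log_3 D$ (which widens the $\sigma_k$-spacing) and appealing to the joint distribution estimate in its form yielding the smaller exceptional set $\exp(-c\log_2 x/\log_3 x)$, at the cost of a slightly weaker lower bound on sign changes.

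\textbf{Main obstacle.} The delicate step is Step 2: producing a single scalar $R_k(\chi_D)$ whose sign genuinely controls $\operatorname{sign} S_{\chi_D}(N_k)$ for almost every $D$, with an error small enough that Step 3 can be applied. The depths $\sigma_k$ go to zero with $D$ although they remain above $1/\log D$, placing us in the transition regime where $\log L(1/2+\sigma_k,\chi_D)$ is Gaussian of variance $\asymp\log(1/\sigma_k)\asymp\log_2 D$ and the associated oscillatory $t$-integral does not reduce to a single saddle point by inspection. It is precisely the combination of the concentration of $\log L(1/2+\sigma_k,\chi_D)$ with the joint Gaussian approximation for $-L'/L$ at the $\sigma_k$ that allows one to isolate a clean sign-determining functional with controllable error, and that is where the novel inputs promised in the abstract enter the proof.
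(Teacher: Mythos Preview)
Your Step 2 is a genuine gap, and it is not the mechanism the paper uses. You propose to show, via Perron and a stationary-phase reduction, that for typical $D$ the \emph{sign} of $S_{\chi_D}(N_k)$ agrees with the sign of $-\tfrac{L'}{L}(\tfrac12+\sigma_k,\chi_D)$ at the matched scale $\sigma_k=1/\log N_k$. There is no reason to expect such a pointwise correspondence: the Perron integral for $S_{\chi_D}(N_k)$ is a genuine oscillatory integral of $L(s,\chi_D)N_k^s/s$ along a vertical line, and you yourself note it does not collapse to a single saddle. The concentration of $\log L(\tfrac12+\sigma_k,\chi_D)$ controls the size of $L$ at one real point, not the phase cancellation along the whole contour, so it cannot rescue this step.

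The paper bypasses any pointwise sign matching. It starts from the elementary Laplace identity
\[
\frac{L(s,\chi_D)}{s}\Bigl(-\frac{L'}{L}(s,\chi_D)+\frac{1}{s}\Bigr)=\int_0^\infty tS_{\chi_D}(e^t)e^{-st}\,dt,
\]
and uses the variation-diminishing property of the Laplace transform (a lemma of Karlin, Lemma~\ref{lem:Karlin}): the number of sign changes of $t\mapsto tS_{\chi_D}(e^t)$ is at least the number of sign changes of its Laplace transform in $s$. Thus one never compares $S_{\chi_D}(N_k)$ with $R_k(\chi_D)$ one $k$ at a time; instead, many sign changes of $-\tfrac{L'}{L}(s_r,\chi_D)$ (produced exactly as in your Step~3, via Theorem~\ref{Thm.SignChangesL'L}), combined with the lower bound $L(s_r,\chi_D)\gg(2s_r-1)^{-1/4}$ from Proposition~\ref{CLTLogL}, force many sign changes of the \emph{left-hand side} as a function of $s$. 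To localize these sign changes to the desired $t$-interval, the paper truncates the Laplace integral to $[Y_1,Y_2]=[(\log x)^{\alpha/20},(\log x)^{4\alpha}]$ via a second-moment argument (Proposition~\ref{pro.TruncationLaplaceChar}), and then applies Karlin's lemma to the truncated transform. Your Steps~1 and~3 are essentially right in spirit; what you are missing is the Laplace/Karlin bridge in place of the unworkable Perron--saddle-point step.
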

As an immediate consequence, we disprove the analogue of Kalmynin's conjecture \cite{Kalmynin} for fundamental discriminants.
\begin{cor}
There exists a positive constant $c$ such that the relative proportion of fundamental discriminants $|D| \le x$ for which $S_{\chi_D}(N)\ge 0$ for {\it all } $0<N\le |D|-1$ is 
$$ \ll \exp(-c\log_2(x)/\log_3(x)). $$
\end{cor}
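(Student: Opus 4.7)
The plan is to invoke part 2 of Theorem \ref{ThmPartialSumsPositive} directly. Fix any admissible $\alpha$, say $\alpha=1/40$, and let $\mathcal{E}_x$ denote the corresponding exceptional set of fundamental discriminants $|D|\le x$ supplied by that result. Then $|\mathcal{E}_x|\ll x\exp(-c\log_2(x)/\log_3(x))$ for some absolute constant $c>0$, and for every $D\notin \mathcal{E}_x$ with $|D|\le x$ sufficiently large, the partial sum $S_{\chi_D}(N)$ exhibits at least $\gg (\log_2|D|)/\log_3|D|\ge 1$ sign changes in the window $e^{(\log|D|)^{\alpha/100}}\le N\le e^{(\log|D|)^{\alpha}}$.

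The key observation is that $S_{\chi_D}(1)=\chi_D(1)=1>0$, so the mere existence of one sign change in the sequence $(S_{\chi_D}(N))_{1\le N\le |D|-1}$ forces $S_{\chi_D}(N)<0$ for at least one value of $N$ in that range. Consequently, every fundamental discriminant $D$ with $|D|\le x$ and $S_{\chi_D}(N)\ge 0$ throughout $0<N\le |D|-1$ must lie in $\mathcal{E}_x$, up to a bounded set of small $|D|$ that is harmlessly absorbed into the implied constant. Dividing $|\mathcal{E}_x|$ by the total number of fundamental discriminants $|D|\le x$, which is $\asymp x$ by standard density estimates, yields the asserted bound $\ll \exp(-c\log_2(x)/\log_3(x))$ on the proportion.

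There is essentially no obstacle here: the corollary is an immediate deduction from part 2 of Theorem \ref{ThmPartialSumsPositive}, where all of the genuine difficulty lies. The only minor cautions concern ensuring that sufficiently small $|D|$ do not disrupt the bound (handled by the implied constants), and that the count of fundamental discriminants in $[1,x]$ is bounded below by a positive multiple of $x$, which is classical. Nothing beyond the theorem itself is required to conclude.
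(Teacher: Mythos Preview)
Your proposal is correct and matches the paper's approach: the corollary is stated as ``an immediate consequence'' of part 2 of Theorem \ref{ThmPartialSumsPositive}, with no separate proof given, and your deduction via $S_{\chi_D}(1)=1>0$ is exactly the intended one-line argument.
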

An improved upper bound for this proportion, of the form $(\log x)^{-c_0}$ for some small positive constant $c_0$, has been announced to appear in a forthcoming work of Angelo, Soundararajan and Xu. We also note that by adapting our methods, one can obtain similar results in the case of prime discriminants (and hence disprove the original conjecture of Kalmynin in this case), assuming the non-existence of Landau–Siegel zeros.
\begin{rem}
\begin{itemize}
\item Existing results in the literature produce a bounded number of sign changes in the full range $1\le N\le |D|.$ The most important feature of our Theorem \ref{ThmPartialSumsPositive} compared to the previous works is the localization of  {\it many} sign changes in a {\it very short} interval which also goes beyond the range of Vinogradov's conjecture. The proof we give here is flexible and could produce sign changes on a shorter interval at the cost of reducing the number of such sign changes. Finally, we mention a result of Banks, Garaev, Heath-Brown and Shparlinski \cite{BGHS} who showed that there is a positive proportion of quadratic non-residues in the Burgess' range $(N\geq D^{\frac{1}{4\sqrt{e}}+\varepsilon})$.
\item In a closely related random multiplicative case (the Rademacher model), Aymone, Heap and Zhao \cite{Aymone} exhibit almost surely an infinite number of sign changes. Building on \cite{Aymone}, very recently Geis and Hiary \cite{G-H} showed that almost surely the number of sign changes of $\sum_{n\leq t}f(n)$ for $t\in[0,x]$ is at least $(\log\log\log x)^{1/c}$ for some constant $c>2,$ where $f(n)$ is a Rademacher random multiplicative function. Following our proof of Theorem \ref{ThmPartialSumsPositive} and replacing averaging over the characters by the expectations one can improve these bounds to get the following corollary.
\begin{cor}
Let $f:\mathbb{N}\to \{-1,0,1\}$ be a Rademacher random multiplicative function. Then almost surely for large $x\ge 1$, the partial sums 
$\sum_{n\le y}f(n)$ exhibit $\gg\log_2 x/\log_4 x$ sign changes on the interval $[1,x].$
\end{cor}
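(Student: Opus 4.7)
The plan is to run the argument underlying Theorem \ref{ThmPartialSumsPositive} with averages over fundamental discriminants $|D|\leq x$ replaced by expectations with respect to the random $f$, and then upgrade the resulting high-probability statement at each fixed scale to an almost sure statement valid for all large $x$ via a Borel--Cantelli argument.

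First, I would establish the random analogue of the two main probabilistic inputs that drive Theorem \ref{ThmPartialSumsPositive}: the joint distribution of $-L'/L$ at several points in the vicinity of $s=1/2$, and concentration of $\log L$ in the same range. Here the random Dirichlet series
\[
L(s,f) = \prod_p \bigl(1 - f(p)p^{-s}\bigr)^{-1}
\]
converges for $\Re(s) > 1/2$, and its logarithmic derivative $-L'/L(s,f) = \sum_p f(p)\log p / (p^s - f(p))$, after truncating the Euler product at a convenient height, is a sum of genuinely independent random variables. For a collection of $K = \lfloor c\,\log_2 x/\log_4 x\rfloor$ suitably chosen test points $s_1,\dots,s_K$ with $\Re(s_j) = 1/2 + 1/(\log x)^{\alpha}$ and imaginary parts spread across an appropriate short window, one verifies by a multivariate CLT that the joint law of $\bigl(-L'/L(s_j,f)\bigr)_{j\leq K}$ is close to a Gaussian vector with near-diagonal covariance, and similarly obtains sub-Gaussian concentration for $\log L(s_j,f)$. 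The Rademacher setting is, if anything, cleaner than the character setting, since the $f(p)$ are truly independent and neither a large sieve nor a zero-density input is needed to secure the quantitative CLT.

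With these inputs in hand, the deterministic mechanism behind Theorem \ref{ThmPartialSumsPositive} applies essentially verbatim: a smoothed Perron formula expresses $\sum_n f(n)\Phi_j(n)$ (for a kernel $\Phi_j$ localized at $n \asymp e^{\im(s_j)}$) in terms of $-L'/L(s_j,f)$, the near-independence across $j$ of these quantities forces many sign changes of the smoothed sums at the evaluation scales, and concentration of $\log L$ controls the oscillation of the unsmoothed partial sums between them. This yields: for every fixed $A \geq 1$ and every sufficiently large $x$,
\[
\pr\!\left( \sum_{n\leq y} f(n) \text{ has at least } c \frac{\log_2 x}{\log_4 x} \text{ sign changes for } y \in [1,x]\right) \geq 1 - \exp\bigl(-(\log_3 x)^A\bigr).
\]

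Finally, I would promote this fixed-scale statement to an almost sure one by applying Borel--Cantelli along the rapidly growing sequence $x_k = \exp\exp(k)$. The exceptional probabilities $\exp\bigl(-(\log_3 x_k)^A\bigr) = \exp(-(\log k)^A)$ are summable for any $A > 1$, so almost surely the conclusion holds at every sufficiently large $x_k$. For $x \in [x_k, x_{k+1}]$, the sign changes guaranteed on $[1, x_k] \subset [1, x]$ already number $\gg \log_2 x_k / \log_4 x_k \asymp \log_2 x/\log_4 x$, since $\log_2 x_{k+1} - \log_2 x_k = O(1)$. The main technical obstacle is to make the quantitative joint Gaussian approximation uniform enough in the number of test points $K$ for the failure probability to be summable across the Borel--Cantelli scales; this requires a careful balance between $K$, the spacing of the $s_j$, and the truncation height of the Euler product.
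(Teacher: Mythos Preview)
Your overall plan --- adapt the proof of Theorem \ref{ThmPartialSumsPositive} to the Rademacher setting by replacing averages over $D\in\F$ with expectations, and then upgrade via Borel--Cantelli along $x_k=\exp\exp(k)$ --- is correct and matches the paper's indication. The Borel--Cantelli step is a genuine addition the paper glosses over, and your argument there is fine.

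However, your description of the \emph{mechanism} inside Theorem \ref{ThmPartialSumsPositive} is wrong in two places, and as stated it would not produce sign changes of the real partial sums.

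First, the test points $s_r$ in the paper are \emph{real}, namely $s_r=\tfrac12+M^{-3r}$ for $R/5\le r\le R$ with $R\asymp (\log_2 x)/\log M$; they do not sit at a fixed real part with varying imaginary parts. This is essential: the objects $-\frac{L'}{L}(s_r,f)$, $L(s_r,f)$ and the Laplace transform $\int_0^\infty t S_f(e^t)e^{-s_r t}\,dt$ are then real, so it makes sense to talk about their signs. With complex $s_j$ the quantities are complex and the sign-change argument collapses. Independence among the approximating Dirichlet polynomials does not come from imaginary separation but from the fact that $-\frac{L'}{L}(s_r,f)$ is approximated by $\sum_{u_r<p<v_r} f(p)(\log p)p^{-s_r}$ with \emph{disjoint} prime ranges $(u_r,v_r)=(\exp(M^{3r-1}),\exp(M^{3r+1}))$; in the Rademacher model this yields genuine independence of the $r$-indexed sums.

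Second, the deterministic bridge from sign changes of $-\frac{L'}{L}$ to sign changes of $S_f$ is not a smoothed Perron formula with kernels $\Phi_j$ localised at $n\asymp e^{\im(s_j)}$. It is the Laplace identity
\[
\frac{L(s,f)}{s}\Bigl(-\frac{L'}{L}(s,f)+\frac{1}{s}\Bigr)=\int_0^\infty t\,S_f(e^t)\,e^{-st}\,dt,
\]
combined with Karlin's lemma (Lemma \ref{lem:Karlin}): the number of sign changes of a real function is at least the number of sign changes of its Laplace transform. The truncation of the integral to $[Y_1,Y_2]$ (Proposition \ref{pro.TruncationLaplaceChar}) is what localises the sign changes; the concentration of $\log L(s_r,f)$ (Proposition \ref{CLTLogL}) ensures the factor $L(s_r,f)/s_r$ does not kill the sign. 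Once you replace your Perron/imaginary-shift picture with this Laplace/Karlin mechanism and real $s_r$, the rest of your outline (including the Borel--Cantelli conclusion) goes through.
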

We remark that these sign changes might not be detected as locally as in the work of Geis and Hiary.
\end{itemize}
 \end{rem}

The proofs of Theorems \ref{MainFek} and \ref{ThmPartialSumsPositive}, in turn, rely on appropriate variants of the following quantitative estimate concerning oscillations of $L'(s,\chi_D)$, coupled with a concentration result for the distribution of $L(s,\chi_D)$ in the vicinity of $1/2$ (see Proposition \ref{CLTLogL} below).
\begin{thm}\label{Main} 
Let $A\geq 1$ be a fixed constant.
\begin{itemize}
\item[1.]  For all except for a set of size $O\big(x\exp(-(\log_3 x)^A)\big)$ fundamental discriminants $|D|\leq x$, $L'(s, \chi_D)$ has $\gg_A (\log_2|D|)/\log_4|D|$ real zeros in the interval $(0, 1)$.

\item[2.] There exists a positive constant $c$ such that for all except for a set of size $O\big(x\exp(-c\log_2(x)/\log_3(x))\big)$ fundamental discriminants $|D|\leq x$,  $L'(s, \chi_D)$ has $\gg (\log_2|D|)/\log_3|D|$ real zeros in the interval $(0, 1)$. \end{itemize}
\end{thm}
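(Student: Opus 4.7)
The strategy is to find a collection of test points $0 < s_1 < s_2 < \cdots < s_k < 1$ at which the sign of $L'(s_i,\chi_D)$ alternates many times, and then extract zeros via the intermediate value theorem. Since the logarithmic derivative $-L'/L(s,\chi_D)=\sum_n \Lambda(n)\chi_D(n) n^{-s}$ has a clean Dirichlet series representation amenable to moment computations, I would study the sign changes of
\[
V_i(D):=-\frac{L'}{L}(s_i,\chi_D), \qquad 1\leq i\leq k,
\]
and convert them into sign changes of $L'(\cdot,\chi_D)$ itself using the concentration estimate (Proposition~\ref{CLTLogL}) to guarantee that $L(s_i,\chi_D)>0$ simultaneously for all $i$ outside a small exceptional set.

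Concretely, I would fix a small $\delta>0$ and take $s_1<\cdots<s_k$ inside a short window $(1/2+\delta, 1/2+2\delta)$ with carefully chosen spacings. The main analytic input is a quantitative joint central limit theorem for the random vector $(V_1(D),\ldots,V_k(D))$, with $D$ drawn uniformly from $\F$. After a standard smooth truncation reducing $-L'/L(s,\chi_D)$ to a short prime sum $\sum_{p\leq Y} \chi_D(p)\log p/p^{s}$, the mixed moments
\[
\frac{1}{|\F|}\sum_{D\in\F} V_1(D)^{a_1}\cdots V_k(D)^{a_k}
\]
can be computed by orthogonality of quadratic characters at integer arguments, and shown to agree (up to negligible error) with those of independent centered Gaussians of variance $\sigma_i^2=\sum_{p\leq Y}(\log p)^2/p^{2s_i}$.

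Given this joint approximate independence, the signs of the $V_i(D)$ behave like i.i.d.\ fair coin flips, so a Chernoff/Markov-type estimate shows that for all but an exceptional set of $D$ the number of sign changes in $(V_1(D),\ldots,V_k(D))$ is $\gg k$. Choosing $k\asymp \log_2 x/\log_3 x$ and matching the Gaussian moments up to order $\asymp k$ yields the exponential exceptional bound $\exp(-c\log_2 x/\log_3 x)$ of part~2 directly from Chernoff. Part~1 instead takes $k\asymp \log_2 x/\log_4 x$ (producing more sign changes), but uses only $M\asymp (\log_3 x)^A$ moments in the joint moment comparison; a Markov-type tail bound then yields the larger exceptional set $\exp(-(\log_3 x)^A)$. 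Combining with Proposition~\ref{CLTLogL} to ensure $L(s_i,\chi_D)>0$ on a set of $D$ of the same (or larger) density, the $\gg k$ sign changes of $V_i(D)$ transfer to sign changes of $L'(s_i,\chi_D)$, producing $\gg k$ real zeros in $(s_1,s_k)\subset(0,1)$ by continuity.

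The main obstacle is the quantitative joint moment comparison, in particular controlling the ``diagonal-like'' contributions in the mixed moments arising from tuples of primes $(p_1,\ldots,p_r)$ whose product is a perfect square (so that orthogonality fails). The closer $s_i$ is to $1/2$, the larger the variances $\sigma_i^2$ and hence the stronger the Gaussian behaviour per point, but also the longer the truncation length $Y$ required to represent $-L'/L(s_i,\chi_D)$ faithfully, which in turn restricts the order of moments controllable through quadratic character sum estimates over $\F$. This trade-off between the number of test points, the admissible order of moments, and the localization window to $1/2$ is precisely what governs the contrasting statements of parts~1 and~2.
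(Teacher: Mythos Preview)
Your overall architecture is right: sample $-L'/L(s,\chi_D)$ at many points, compare to a random model via moments, extract many sign changes, and convert these into zeros of $L'$. But the execution you describe has a genuine gap at the independence step.

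\textbf{The correlation problem.} If you place all the test points $s_i$ in a \emph{fixed} window $(1/2+\delta,1/2+2\delta)$ and use a common truncation $\sum_{p\le Y}\chi_D(p)\log p/p^{s_i}$, then the $V_i(D)$ are \emph{not} approximately independent: for $s_i=1/2+a_i$, $s_j=1/2+a_j$ one has
\[
\mathrm{Cov}(V_i,V_j)\sim\frac{1}{(a_i+a_j)^2},\qquad \mathrm{Var}(V_i)\sim\frac{1}{4a_i^2},
\]
so the correlation is $4a_ia_j/(a_i+a_j)^2\ge 8/9$ throughout your window. Thus the mixed moments do \emph{not} match those of independent Gaussians, and your ``i.i.d.\ fair coin flip'' heuristic for the signs collapses. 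The paper's key device is to let $s_r\to 1/2$ at geometrically separated rates, $s_r=1/2+M^{-3r}$, and to truncate each $V_r$ to a \emph{different} prime window $(u_r,v_r)=(e^{M^{3r-1}},e^{M^{3r+1}})$ centred around the scale $e^{1/(s_r-1/2)}$ where $(\log p)^2/p^{2s_r}$ concentrates. These prime windows are disjoint, so in the random model the truncated sums are \emph{exactly} independent. Proposition~\ref{TruncationPrimes} is what makes this truncation legitimate for almost all $D$, with an error governed by the parameter $M$.

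\textbf{The source of the trade-off.} You locate the difference between parts 1 and 2 in the number of moments used. In fact the discrepancy bound (Theorem~\ref{Discrepancy}) is always extremely strong, $O(J/(\log x)^{1/10})$; the bottleneck is entirely in the truncation step. The exceptional set in Theorem~\ref{Thm.SignChangesL'L} has two pieces,
\[
x\exp(-\delta R/100)\quad\text{and}\quad xR\exp(-M^2/60),
\]
with $R\asymp(\log_2 x)/\log M$. Choosing $M=(\log_3 x)^A$ maximises $R$ (part~1), while $M=\log_2 x$ minimises the first term (part~2). Your description of ``using only $M\asymp(\log_3 x)^A$ moments'' does not reflect the actual mechanism.

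\textbf{A minor point.} You invoke Proposition~\ref{CLTLogL} to guarantee $L(s_i,\chi_D)>0$ before converting sign changes of $L'/L$ into zeros of $L'$. This is unnecessary here (the paper does not use it for Theorem~\ref{Main}): on any interval where the sequence $\big(L'/L(s_r,\chi_D)\big)_r$ changes sign, either $L'$ vanishes there, or $L$ does---and by Rolle, zeros of $L$ produce zeros of $L'$ as well, so in all cases one obtains $\gg R$ real zeros of $L'$. Proposition~\ref{CLTLogL} is needed only in the proofs of Theorems~\ref{MainFek} and~\ref{ThmPartialSumsPositive}, where one must control the \emph{size} of $L(s_r,\chi_D)$, not just its sign.
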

We mention yet another application of Theorem \ref{Main}.
Let $D$ be a positive fundamental discriminant and let
\begin{equation}\label{theta} \theta(t,\chi_D):=\sum_{n=1}^{\infty} \chi_D(n)e^{-\frac{\pi n^2t}{D}} \,\,\,(t>0) \end{equation} be the theta function associated to $L(s,\chi_D),$ (see \cite[Chapter 9]{Dav}). The study of real zeros of $\theta(\cdot,\chi_D)$ was initiated in \cite{Bengo}, \cite{BMT}, \cite{Debrecen}, \cite{jnttheta} and is also mentioned in \cite{Harper-short}, \cite{HarperMaks}, \cite{Harperhigh} and \cite{SarBach}. Theorem \ref{Main} immediately implies the following.\footnote{We did not aim to localize the zeros of $\theta(\cdot,\chi_D)$ as in Theorem \ref{MainFek} or to quantify the size of the exceptional set, but this should follow using the same methods.}
  \begin{cor}\label{zerostheta} 
  For almost all 
  fundamental discriminants $0< D\leq x$, $\theta(\cdot,\chi_D)$  has $\gg (\log_2D)/\log_4D$ real zeros in the interval $(0, \infty)$.
  \end{cor}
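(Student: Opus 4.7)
My plan is to exploit the Mellin identity
\[
G(s)\;:=\;\Gamma(s/2)\,(D/\pi)^{s/2}\,L(s,\chi_D)\;=\;\int_{0}^{\infty}\theta(t,\chi_D)\,t^{s/2-1}\,dt,
\]
which realizes the completed $L$-function as the Mellin transform of $\theta(\cdot,\chi_D)$. The first step is the classical variation-diminishing property of the totally positive kernel $(s,t)\mapsto t^{s/2-1}$: for any real-valued $\phi$ on $(0,\infty)$, the number of sign changes of $s\mapsto\int_0^\infty\phi(t)t^{s/2-1}dt$ on the real axis is bounded above by the number of sign changes of $\phi$. Combined with the strict positivity of $\Gamma(s/2)(D/\pi)^{s/2}$ on $(0,1)$, this yields
\[
\#\{\text{sign changes of }\theta(\cdot,\chi_D)\text{ on }(0,\infty)\}\;\ge\;\#\{\text{sign changes of }L(s,\chi_D)\text{ on }(0,1)\},
\]
and each sign change of the continuous function $\theta(\cdot,\chi_D)$ produces a real zero. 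The task therefore reduces to exhibiting, for almost every fundamental discriminant $0<D\le x$, $\gg(\log_2 D)/\log_4 D$ sign changes of $L(s,\chi_D)$ inside $(0,1)$.

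The second step is to extract these sign changes from the machinery underlying Theorem~\ref{Main}. Although the statement of Theorem~\ref{Main} only records zeros of $L'(s,\chi_D)$, its proof is based on joint quantitative estimates for $-\tfrac{L'}{L}(s,\chi_D)$ at several points near $s=1/2$, combined with the concentration result for $\log L(s,\chi_D)$ in the same range (Proposition~\ref{CLTLogL}). These inputs allow one to select a mesh $s_1<s_2<\cdots<s_k$ inside $(0,1)$ with $k\gg(\log_2 D)/\log_4 D$ such that, outside the exceptional set appearing in Theorem~\ref{Main}, the values $L(s_j,\chi_D)$ are simultaneously bounded away from $0$ and alternate in sign. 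Between consecutive $s_j$ the function $L(s,\chi_D)$ must then cross zero, producing $\gg(\log_2 D)/\log_4 D$ sign changes of $L$; the zeros of $L'$ stated in Theorem~\ref{Main} follow from Rolle's theorem between these sign changes. Feeding this into the Mellin inequality above finishes the proof.

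The main obstacle is precisely the upgrade from ``zeros of $L'$'' to ``alternating signs of $L$ along the mesh'': this cannot be deduced from the statement of Theorem~\ref{Main} alone, since Rolle's theorem only runs in the other direction. One must therefore reopen the proof of Theorem~\ref{Main} and check that the probabilistic model used there actually pins down the \emph{sign} (and not merely the oscillation) of $L(s_j,\chi_D)$; this should be transparent once the concentration result for $\log L$ is in force, as it controls the value of $L$ itself rather than only its logarithmic derivative. The footnote's comment that the zeros of $\theta$ can be localized and the exceptional set shrunk ``using the same methods'' refers to re-running the analogue of the proof of Theorem~\ref{MainFek} with $\theta$ in place of $F_D$: the change of variable $t=e^{u}$ recasts $\theta(e^u,\chi_D)$ as a smoothly weighted character sum amenable to the same distributional analysis as in the proof of Theorem~\ref{ThmPartialSumsPositive}.
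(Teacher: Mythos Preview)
Your first step (the Mellin identity and the variation-diminishing property of the kernel $t^{s/2-1}$) is correct and is essentially the same as the paper's. The genuine gap is in your second step: you assert that from the proof of Theorem~\ref{Main} one can extract a mesh $s_1<\cdots<s_k$ along which the \emph{values} $L(s_j,\chi_D)$ alternate in sign. This is false, and is in fact contradicted by the very concentration result you invoke. Proposition~\ref{CLTLogL} says that for typical $D$ and for $s$ in the relevant range,
\[
\log L(s,\chi_D)\;=\;\tfrac12\log\frac{1}{s-1/2}+O\!\left(\sqrt{\log\tfrac{1}{s-1/2}}\right),
\]
which forces $L(s_j,\chi_D)$ to be \emph{positive} (indeed of size roughly $(s_j-1/2)^{-1/2}$) at every sample point. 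What Theorem~\ref{Thm.SignChangesL'L} produces are sign changes of $-\tfrac{L'}{L}(s,\chi_D)$, and combined with $L>0$ these are sign changes of $L'$, not of $L$. So you cannot feed sign changes of $L$ into the undifferentiated Mellin identity.

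The paper's fix is to differentiate the Mellin identity first, obtaining
\[
\Gamma(s/2)\,L(s,\chi_D)\left(\frac{L'}{L}(s,\chi_D)+\tfrac12\frac{\Gamma'}{\Gamma}(s/2)\right)
=\int_0^\infty \theta\!\left(\frac{tD}{\pi},\chi_D\right)t^{s/2-1}(\log t)\,dt.
\]
Here $\Gamma(s/2)L(s,\chi_D)>0$ at the sample points (by Proposition~\ref{CLTLogL}) and $\tfrac12\Gamma'/\Gamma(s/2)=O(1)$, so the left side inherits the $\gg(\log_2 D)/\log_4 D$ sign changes of $L'/L$ coming from Theorem~\ref{Thm.SignChangesL'L}. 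Applying Lemma~\ref{lem:Karlin} to the right side (after the substitution $t=e^{-u}$) gives at least as many sign changes of $u\mapsto u\,\theta(e^{-u}D/\pi,\chi_D)$, and since the extra factor $u$ has a single sign change, one loses at most one and concludes for $\theta$.

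In short: your Mellin/variation-diminishing framework is right, but you must apply it to the \emph{differentiated} identity so that the input is sign changes of $L'/L$ (which the paper actually supplies), not sign changes of $L$ (which it does not).
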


\subsection{Upper bounds for the number of real zeros of Fekete polynomials.}
As was already remarked, it is a rather challenging task to give upper bounds for the number of real zeros of an individual Littlewood-type polynomial. 
In a recent survey, Erd\'elyi~\cite{ErdSurv} suggests that the only known upper bound for the number of zeros of $F_D$ is $O(\sqrt{|D|}),$ which follows from a general upper bound valid for any Littlewood polynomial proved by Borwein, Erd\'elyi and K\'{o}s  \cite{ErdBor}.
 In Section \ref{upperbounds}, we provide a ``non-trivial" upper bound for the number of real zeros of Fekete polynomials on average over discriminants. Our result in this direction is the following.
\begin{thm}\label{Thmupperbnd}
 For at least $\gg x^{1-o(1)}$ fundamental discriminants $0<D\leq x$, the associated Fekete polynomial $F_D$ has at most $O(x^{1/4+o(1)})$ real zeros. 
\end{thm}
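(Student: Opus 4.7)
The plan is to use Burgess's bound to obtain $\|F_D\|_{L^\infty[-1,1]}\ll D^{1/4+o(1)}$, and then convert this $L^\infty$-control into a bound on real zeros by a Jensen-type argument, handling the bulk of $(-1,1)$ and the narrow neighborhoods of $\pm 1$ separately.

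By Burgess's inequality, $|S_N(\chi_D)|=\bigl|\sum_{n\leq N}\chi_D(n)\bigr|\ll D^{1/4+o(1)}$ uniformly in $N\leq D$, for every fundamental discriminant $D\leq x$. Abel summation together with $F_D(1)=0$ gives
\[
F_D(y)=(1-y)\sum_{n<|D|}S_n(\chi_D)\,y^n,
\]
whence, since $(1-y)\sum_{n<|D|}|y|^n\leq 1$ for $y\in[-1,1]$, we obtain $\|F_D\|_{L^\infty[-1,1]}\ll D^{1/4+o(1)}$.

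For zeros in the bulk, I would apply Jensen's formula to $F_D(z)/z$ (which has constant term $\chi_D(1)=1$) on the disk $|z|\leq R$ with $R=1-D^{-1/4}$. On $|z|=R$ the crude bound $|F_D(z)|\leq\sum_{n<|D|}|z|^n\leq 1/(1-R)\ll D^{1/4}$ holds, so Jensen's formula yields
\[
\sum_{|\alpha|\leq R}\log(R/|\alpha|)\ll \log D,
\]
summed over zeros $\alpha$ of $F_D/z$. Each $\alpha$ with $|\alpha|\leq 1-2D^{-1/4}$ contributes $\gg D^{-1/4}$ to the left-hand side, so the number of such zeros — in particular the number of real zeros of $F_D$ in $(-1+2D^{-1/4},\,1-2D^{-1/4})$ — is $\ll D^{1/4}\log D = D^{1/4+o(1)}$.

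It remains to handle real zeros in the thin neighborhoods $(1-2D^{-1/4},\,1)$ and $(-1,\,-1+2D^{-1/4})$. For $D>0$, the functional equation $F_D(z)=z^{|D|}F_D(1/z)$ (a consequence of the evenness of $\chi_D$) pairs each real zero $\alpha\in(0,1)$ of $F_D$ with a real zero $1/\alpha\in(1,\infty)$, and analogously on the negative side. Together with an averaging argument over $D$ that rules out anomalous smallness of the low-order Taylor coefficients $F_D^{(b)}(1)$ and $F_D^{(c)}(-1)$ (where $b,c$ are the orders of vanishing of $F_D$ at $\pm 1$, and these coefficients are explicit weighted character sums of the shape $\sum_n n^k\chi_D(n)$), one obtains the same $D^{1/4+o(1)}$ bound on the remaining zeros for all but $x^{o(1)}$ discriminants. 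The main obstacle is precisely this last step: the Burgess $L^\infty$-control degrades at the endpoints $\pm 1$, and ruling out an accumulation of zeros in shrinking neighborhoods of these singular points requires a finer, on-average lower bound on the magnitudes of $F_D^{(b)}(1)$ and $F_D^{(c)}(-1)$, together with a Jensen argument on small complex disks tangent to the unit circle at $\pm 1$. The $x^{o(1)}$ exceptional set in the statement absorbs the discriminants for which this averaged lower bound fails.
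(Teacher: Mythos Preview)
Your Jensen argument for the bulk is fine and essentially reproduces Lemma~\ref{leftzeros} (Borwein--Erd\'elyi--K\'os): for every $D$ there are $\ll D^{1/4}\log D$ zeros in $(-1+2D^{-1/4},1-2D^{-1/4})$. But two issues remain.

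The Burgess claim is false. Burgess does not give $|S_N(\chi_D)|\ll D^{1/4+o(1)}$ uniformly in $N\leq D$; the best uniform bound is P\'olya--Vinogradov, $\ll\sqrt{D}\log D$, and partial sums of size $\gg\sqrt{D}$ genuinely occur for typical $D$. Burgess only yields $S_N=o(N)$ once $N>D^{1/4+\varepsilon}$, which is far weaker. Fortunately you never actually invoke this $L^\infty$ bound in Steps~2 or~3, so the error is cosmetic --- but the opening paragraph should be dropped.

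The real gap is Step~3, which you yourself flag. The functional equation $F_D(z)=z^{|D|}F_D(1/z)$ only pairs zeros across the unit circle and says nothing about accumulation near $z=1$; and ``an averaging argument on $F_D^{(b)}(1)$'' is not a proof. Controlling zeros in $(1-2D^{-1/4},1)$ is precisely the hard part of the theorem, and the paper's entire work goes here. Rather than expanding at $z=1$, the paper samples $F_D$ at three real points $z_1=e^{-x^{-1/4+\varepsilon}}$, $z_2=e^{-x^{-1/2}}$, $z_3=e^{-x^{1/4}/D}$ (the last at distance $\asymp x^{1/4}/D$ from $1$), computes the first two moments of the product $\prod_i F_D(z_i)$ over $D$, and deduces via Cauchy--Schwarz that $|F_D(z_i)|\gg x^{-100}$ simultaneously for $\gg x^{1-o(1)}$ discriminants (Propositions~\ref{thirdmoment}--\ref{thirdmoment-2} and Corollary~\ref{cor3circ}). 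Jensen's formula on three circles centered at the $z_i$ then covers $[z_1,1]$; the crucial disk around $z_3$ has $\max|F_D|\ll e^{cx^{1/4}}$ on its outer boundary, which against the lower bound $x^{-100}$ at the center yields $\ll x^{1/4}$ zeros there. A Poisson-summation identity (Lemma~\ref{truncatedl}) is needed to convert $F_D(z_3)$ into a dual character sum of length $\asymp x^{1/4}$, making the moment computation tractable. This machinery is what your Step~3 is missing.
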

\subsection{Atypical behavior: Fekete's hypothesis and a localized version}
After the disproof (twice!) of Fekete's conjecture, a more plausible statement has been put forward:\\

\underline{\textbf{Fekete's hypothesis}}: There exists infinitely many fundamental discriminants $D$ such that $F_D$ has no zeros in $(0,1)$. \\

This condition of ``positive definiteness'' is a $GL_1$ manifestation of a more general phenomenon in the context of $GL_n$ automorphic representations (see  \cite{Jung} and \cite{SarBach}). In \cite{SarBach}, the heuristic in support of this hypothesis is based on an application of the Kac-Rice formula and uses the result of Dembo, Poonen and Zeitouni \cite{Poonen} (showing that 
the probability that a random polynomial of large even degree $n$ has no real zeros is $n^{-b+o(1)}$ for some constant $0.4 \leq b \le 2 $). However such heuristic works for general random coefficients and does not take into account the multiplicativity of $\chi_D.$ \\

An alternative way is to note that Descartes' rule of signs implies that if the partial character sums $S_{\chi_D}(N)$ remain positive for all $0<N \leq \vert D\vert $ then $F_D$ has no root in $(0,1)$ (see for instance the figure above). Of course, it follows from the result of Kalmynin \cite{Kalmynin} mentioned above and our Theorem \ref{ThmPartialSumsPositive} that this is a rare event, though numerical computations furnish evidence that it occurs with reasonably high probability. Roughly speaking, if one arranges $\chi_D(p)=1$ for many small primes $p\le y=\exp(\log^{1/100} D),$ say (in which case $\chi$ becomes ``$1$-pretentious" in the sense of Granville and Soundararajan \cite{G-S-max-character}), then one can expect the character sums $S_{\chi_D}(x)$
to be dominated by the contribution of the $y-$smooth part $\sum_{n\le x,   n \text{ is 
$y$-smooth}}1,$ for all $y<x<D,$ provided there is some ``randomness" for the values of $\chi_D(p)$ for $p>y.$ Establishing such a randomness rigorously seems hard.
\begin{figure}
\centering
\includegraphics[scale=0.5]{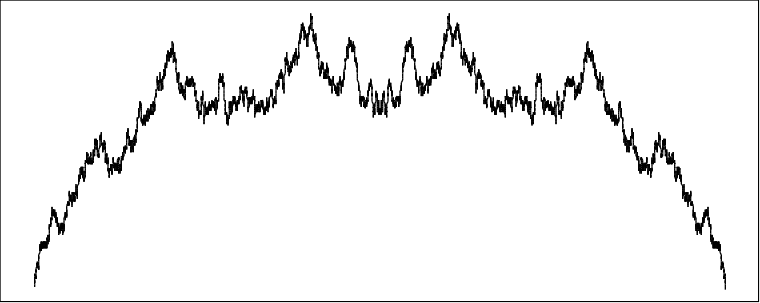}
\caption{Partial sums $S_{\chi_{D}}(N), N=1\dots D-1$ for $D=7727$.} 
\label{figpos}
\end{figure}
Following the latter heuristic and combining with Vinogradov's trick, we exhibit many fundamental discriminants $D$ such that the associated Fekete polynomial does not have real zeros away from $z=1$.
\begin{thm}\label{NoZerosThm}
Let $\varepsilon>0$ be a fixed small number, and $x$ be large. There exists at least $x^{1-1/\log\log x}$ fundamental discriminants $0<D\leq x,$ such that $F_D(z)$ has no zeros in the interval $\left(0, 1-(\log x)^{-\sqrt{e}+\varepsilon}\right)$.
\end{thm}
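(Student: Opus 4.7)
Following the heuristic outlined by the authors, the plan is to construct a large family of fundamental discriminants $D$ at which $\chi_D$ is ``$1$-pretentious'' at all primes $p\le\log x$, and then to exploit this pretentiousness together with Dickman's estimate for the density of smooth numbers to force $F_D(z)>0$ on the claimed interval. Set $y:=\log x$ and let $\mathcal D$ denote the set of positive fundamental discriminants $D\le x$ with $\chi_D(p)=1$ for every prime $p\le y$. By quadratic reciprocity and the Chinese Remainder Theorem, $\mathcal D$ is defined by residue conditions modulo $8\prod_{p\le y,\,p\text{ odd}} p$; each odd prime $p\le y$ contributes density $(p-1)/(2p)$, and Mertens' theorem yields $|\mathcal D|\gg x\cdot 2^{-\pi(y)}/\log_2 x$. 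Since $\pi(\log x)\sim\log x/\log_2 x$, this exceeds $x^{1-\log 2/\log_2 x}\ge x^{1-1/\log_2 x}$ for $x$ large enough.

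Fix $D\in\mathcal D$ and $z\in(0,1-(\log x)^{-\sqrt e+\varepsilon}]$, and write $z=e^{-\delta'}$, so that $\delta'\ge(\log x)^{-\sqrt e+\varepsilon}$. The case $z\le 1/2$ is trivial because $F_D(z)=z+O(z^2/(1-z))>0$, so one may assume $\delta'\le\log 2$. Since $\chi_D$ is completely multiplicative with $\chi_D(p)=1$ for every $p\le y$, every $y$-smooth $n$ satisfies $\chi_D(n)=1$. I therefore decompose
$$F_D(z)=A(z)+B(z),\qquad A(z):=\sum_{\substack{1\le n<|D|\\ P^+(n)\le y}} z^n,$$
where $B(z)$ collects the non-$y$-smooth contribution. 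Using $|\chi_D|\le 1$ and bounding the tail $n\ge|D|$ by $|D|\,z^{|D|}$ (super-polynomially small in our regime since $\delta'|D|\to\infty$), one obtains
$$F_D(z)\ge 2A(z)-\frac{z}{1-z}+O\big(|D|\,z^{|D|}\big).$$

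The decisive analytic step is an asymptotic for $A(z)$. Summation by parts gives $A(z)=(1-z)\sum_{n\ge 1}\Psi(n,y)\,z^n$ up to the same negligible tail, and combining the de Bruijn saddle-point form of Dickman's asymptotic $\Psi(t,y)\sim t\,\rho(\log t/\log y)$ with a Laplace-type evaluation yields
$$A(z)=\frac{\rho(u)+o(1)}{\delta'},\qquad u:=\frac{\log(1/\delta')}{\log y}.$$
Together with $z/(1-z)=1/\delta'+O(1)$, this gives $F_D(z)\ge(2\rho(u)-1+o(1))/\delta'$. Since $\rho$ is continuous and strictly decreasing, $\rho(u)=1-\log u$ on $[1,2]$, and $\rho(\sqrt e)=1/2$, the hypothesis $\delta'\ge(\log x)^{-\sqrt e+\varepsilon}$ forces $u\le\sqrt e-\varepsilon$, so $\rho(u)\ge 1/2+c(\varepsilon)$ for some $c(\varepsilon)>0$; when $\delta'\ge 1/\log x$ one has $u\le 1$ and $\rho(u)=1$ trivially (all $n\le 1/\delta'$ are $y$-smooth). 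In every case $F_D(z)>0$, which combined with the lower bound on $|\mathcal D|$ proves the theorem.

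The main technical hurdle will be establishing the $A(z)$-asymptotic with a genuine $o(1/\delta')$ error, since $u$ is only bounded (not small) and $y=\log x$ is only polylogarithmic in $|D|$. This will require uniform saddle-point estimates for $\Psi(t,y)$ in the de Bruijn range rather than just the qualitative Dickman theorem, plus careful handling of the boundary contributions in the partial summation; all the necessary inputs are classical and can be found, for instance, in Tenenbaum's monograph.
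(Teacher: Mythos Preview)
Your overall strategy matches the paper's: construct many discriminants $D$ with $\chi_D(p)=1$ for all primes $p\le y\approx\log x$, then show that this forces $F_D(z)>0$ on $(0,1-y^{-\sqrt e+\varepsilon})$. Your Step~2 (the positivity argument) is essentially equivalent to the paper's Lemma~9.1: the paper uses Vinogradov's trick to show $\sum_{n\le t}\chi_D(n)\gg\varepsilon t$ for $t\le y^{\sqrt e-\varepsilon/2}$ via the elementary lower bound $\Psi(t,y)\ge t-\sum_{y<p\le t}\lfloor t/p\rfloor$ and Mertens, whereas you invoke the Dickman asymptotic $\Psi(t,y)\sim t\rho(u)$ and the identity $\rho(\sqrt e)=1/2$ directly. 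Both arguments are correct and yield the same threshold; the paper's route is more self-contained, yours is conceptually cleaner but imports more machinery.

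The gap is in Step~1. You assert that $|\mathcal D|\gg x\cdot 2^{-\pi(y)}/\log_2 x$ ``by quadratic reciprocity and the Chinese Remainder Theorem'', treating the conditions $\chi_D(p)=1$ as independent residue conditions. The problem is that the modulus $M=8\prod_{2<p\le y}p=e^{\theta(y)}$ is of size $x^{1+o(1)}$ when $y=\log x$, so there is at most one integer $\le x$ in each residue class modulo $M$, and no equidistribution statement for fundamental discriminants can possibly apply. Expanding $\prod_{p\le y}\mathbf{1}[\chi_D(p)=1]$ via characters and appealing to Lemma~3.1 does not save you either: the non-square terms involve $n$ of size up to $P(y)^2\approx x^2$, and the resulting error swamps the main term. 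The paper circumvents this entirely (Lemma~9.2) by an explicit construction: it takes $D=q_1q_2$ with $q_1,q_2$ primes in $(x^{1/3},x^{1/2})$, $q_i\equiv 1\pmod 8$, and with \emph{matching} Legendre symbol patterns $(\frac{p}{q_1})=(\frac{p}{q_2})$ for $p\le y$; then $\chi_D(p)=(\frac{p}{q_1})(\frac{p}{q_2})=1$ automatically. The count of such pairs is obtained by partitioning the $\asymp\sqrt x/\log x$ primes $q$ into $2^{\pi(y)-1}$ classes and applying Cauchy--Schwarz, which gives $\gg x/(2^{\pi(y)}(\log x)^2)$ pairs with no large-modulus equidistribution needed. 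This pigeonhole idea is the missing ingredient in your argument; alternatively, you could salvage your approach by shrinking $y$ to, say, $\tfrac{1}{3}\log x$ so that $M\le x^{1/2}$, at which point counting squarefree integers in the admissible residue classes becomes routine and still yields the theorem after absorbing constants into $\varepsilon$.
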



\section{Outline of the proofs}
We shall briefly outline the key flexible idea to produce sign changes of the partial sums $S_f(x)=\sum_{n\le x}f(n)$ of multiplicative functions. This ``soft" approach would work equally well if $f=\chi_D$ is replaced by other families of multiplicative functions, 
 for example by a Rademacher random multiplicative function, or by the normalized Fourier coefficients of automorphic forms, producing similar quantitative results.\\
By partial summation we can write 
$$ F(x,s)=\sum_{n\le x}\frac{f(n)}{n^s}= s\int_1^{x} \frac{S_{f}(u)}{u^{1+s}}du.$$
Now dividing by $s$ and taking the derivatives of both sides with respect to $s$ we obtain
$$ \frac{sF'(x,s)-F(x,s)}{s^2}= -\int_1^{x} \frac{S_{f}(u)\log u}{u^{1+s}}du.$$
Making the change of variables $t=\log u$ in the above integral gives the identity
\begin{equation}\label{Eq.LaplaceCharactersintro} \frac{F(x,s)}{s} \left(-\frac{F'}{F}(x,s)+\frac{1}{s}\right)
= \int_0^{\log x} t S_{f}(e^t) e^{-st}dt.
\end{equation}
A version of a general result of Karlin (see Lemma \ref{lem:Karlin} below) forces the quantity $tS_{f}(e^t)$ to have at least as many sign changes on the interval $[0,\log x)$ as the LHS of \eqref{Eq.LaplaceCharactersintro} (its Laplace transform) for $1/2<s<1.$ We are thus naturally lead to understand the sign changes of $-\frac{F'}{F}(x,s).$ The key observation now is that this logarithmic derivative is typically well approximated by a sum over primes, which allows us to use probabilistic methods and compare it to a suitable random model, constructed as a weighted sum of independent random variables.
\subsection{Proof ideas of Theorems \ref{MainFek}, \ref{ThmPartialSumsPositive}  and  \ref{Main}.} 
First, for the proof of Theorem \ref{ThmPartialSumsPositive}, the preceding strategy takes a particularly elegant form.  We recall that for $\Re(s)>0$ we have by partial summation
$$ L(s, \chi_D)= s\int_1^{\infty} \frac{S_{\chi_D}(u)}{u^{1+s}}du.$$
Following the discussion above we arrive at
\begin{equation}\label{Eq.LaplaceCharactersintro2} \frac{L(s, \chi_D)}{s} \left(-\frac{L'}{L}(s, \chi_D)+\frac{1}{s}\right)
= \int_0^{\infty} t S_{\chi_D}(e^t) e^{-st}dt.
\end{equation}
We are thus naturally lead to understand the distribution of $-\frac{L'}{L}(s, \chi_D).$\\
Our proof of Theorem \ref{Main} is inspired by a beautiful work of Baker and Montgomery \cite{BaMo}.
In order to establish sign changes of $-\frac{L'}{L}(s, \chi_D)$ 
we first show that for a fixed point $s=s(x),$
 $s=1/2+ 1/g(x)$ where $g(x)\to \infty$ slower than a small power of $\log x$, for almost all $D\in \F$, we can approximate $-\frac{L'}{L}(s, \chi_D)$ by short Dirichlet polynomials $\mathcal{P}_D(s)$ over primes that are around $e^{g(x)}$. For a fixed $s,$ with appropriately chosen parameters, such short Dirichlet polynomials are well approximated by Gaussian random variables $X_s$ with mean zero and variance of size $\frac{1}{2s-1}.$ We shall choose many points $s_1,s_2\dots, s_R,$ where $R=R(x)$ in such a way that the supports of the  polynomials $\mathcal{P}_D(s_i)$ for $1\le i\le R$ are disjoint, which forces the corresponding Gaussian random variables to be independent.
It is crucial for our applications to have strong uniformity in all the parameters and we use the methods developed by Lamzouri, Lester and Radziwill \cite{LLR2}, \cite{LLR1} to bound the ``discrepancy" between the distribution of the vector of Dirichlet polynomials  $\left(\mathcal{P}_D(s_i)\right)_{i=1, \dots, R}$ and the associated random vector $(X_{s_i})_{i=1, \dots, R}$ which, with high probability, has a positive proportion of sign changes.

The starting point of the proof of Theorem \ref{MainFek} is the identity (derived from \eqref{Dirichlet-identity}),
\begin{equation}\label{Eq.IdentityLaplaceFekete}
L(s,\chi_D)\Gamma(s)\left(\frac{L'(s,\chi_D)}{L(s,\chi_D)}+\frac{\Gamma'(s)}{\Gamma(s)}\right)=
\int_{0}^{\infty} F_D(e^{-t})(1-e^{-|D|t})^{-1}t^{s-1}(\log t)dt.
\end{equation}
We remark that since we are aiming for quantitatively strong results on short intervals, this causes considerable difficulties. In particular, this prevents us from applying Descartes rule of signs to immediately deduce the bounds on the number of sign changes of partial sums $\sum_{n\le x}\chi_D(n)$ from the corresponding bound on the zeros of Fekete polynomials (and so Theorem \ref{ThmPartialSumsPositive} is not a consequence of Theorem \ref{MainFek}).\\
In order to localise the sign changes in \eqref{Eq.LaplaceCharactersintro2} and \eqref{Eq.IdentityLaplaceFekete}
we shall select points $s_1,s_2\dots, s_R$ to guarantee that 
\[ \pm\left(-\frac{L'(s_r,\chi_D)}{L(s_r,\chi_D)}+\frac{1}{s_r}\right)\gg  \frac{1}{2s_r-1}\] and similarly (with slightly different choice of sampled points),
\[ \pm\left(\frac{L'(s_r,\chi_D)}{L(s_r,\chi_D)}+\frac{\Gamma'(s_r)}{\Gamma(s_r)}\right)\gg  \frac{1}{2s_r-1}\]
for every $1\le r\le R$ and additionally require $L(s_r,\chi_D)\gg 1/(2s_r-1)^{1/4}$ for $100\%$ of $D\in\mathcal{F}(x).$ This is accomplished by proving a large deviation result for the distribution of $\log L(s_r,\chi_D),$ $D\in\mathcal{F}(x)$ (see Proposition \ref{CLTLogL} below). We thus guarantee that for ``good'' discriminants the RHS of \eqref{Eq.LaplaceCharactersintro2} and \eqref{Eq.IdentityLaplaceFekete} have size $\gg (2s_r-1)^{-1}$. Our goal then is to show that for almost all $D\in\mathcal{F}(x)$, we can uniformly truncate the integrals on the RHS by showing that the main contribution comes from the localised quantities
\[\int_{Y}^{Z} t S_{\chi_D}(e^t) e^{-st}dt\]
and
$$\int_{e^{-Z}}^{e^{-Y}} F_D(e^{-t})(1-e^{-\vert D\vert t})^{-1}t^{s_r-1}(\log t) dt$$
for appropriately chosen parameters $Y, Z.$ We shall achieve this by a careful analysis of the underlying weighted character sums, using a variety of analytic tools and large sieve inequalities and taking advantage of the averaging over $D\in\mathcal{F}(x).$ As an outcome of this procedure, we produce sign changes 
for many selected points $s_1,s_2\dots s_R,$ which forces the integrands to change sign in this range.\\
\subsection{ Proof ideas of Theorem \ref{Thmupperbnd}.} 
Our approach proceeds by covering the segment $[0,1)$ by several small circles $\mathcal{C}(z_{\alpha},r_{\alpha})$ centered at the points 
$z_{\alpha}:=\exp(-1/x^{\alpha})$ (for different values of $\alpha$) and  bound  the number of (complex) zeros inside each $\mathcal{C}(z_{\alpha},r_{\alpha})$  using Jensen's formula. This simple idea goes back to Littlewood and Offord \cite{LO1,LO2} in their fundamental work on the number of real roots of random polynomials. In doing so, we need to ensure that for many discriminants  $D\in\mathcal{F}(x),$  the associated polynomial $F_D$ is simultaneously not too small at several points $z_{\alpha_i}$, $1\leq i \leq I$, say
$$|F_D(z_{\alpha_i})|\gg \frac{1}{x^{100}}, \qquad \qquad 1\leq i \leq I$$ for some $ I \geq 1.$ To prove such a result, it is sufficient to  give lower and upper bounds on the mixed moments 
$$S_k(\alpha_1,\cdots,\alpha_I)=\sum_{D\in \F} \prod_{i=1}^I F_D(z_{\alpha_i})^k \hspace{2mm} \textup{for  } k\in \mathbb{N}.$$ Working with real characters instead of random coefficients $\pm 1$ is a more subtle task which forces the number of sampled points to be small. 
We pursue this strategy bounding the first two moments for three well-chosen points ($I=3$ and $k=1,2$) by observing that $F_D(z_{\alpha})$ resembles a character sum of length $\approx D^{\alpha}$. Therefore, we can use standard techniques, including the P\'olya-Vinogradov inequality, large sieve inequalities and the Poisson summation formula, in order to bound averaged character sums in various ranges. 

\subsection{Notation and conventions}


For any integer $k\geq 2$, we use $\tau_k(n)$ and $\varphi(n)$ to denote the $k$-th generalized divisor function and 
the Euler function, of an integer $n \ge 1$, respectively. We also use the standard notation $\tau(n)=\tau_2(n).$  It is useful to recall the well-known estimates
\begin{equation*}
\tau_k(n) = n^{o(1)} \ \text{ for fixed } k,\qquad\mbox{and}\qquad  \varphi(n) \gg \frac{n}{\log \log n}
\end{equation*}
as $n\to \infty$, which we use throughout the paper.


\section{Mean values of real character sums }

In this section we record several mean value estimates and large sieve inequalities for real character sums which will be repeatedly used throughout.  
The first is an ``orthogonality relation'' for quadratic characters, which is a simple application of the P\'olya-Vinogradov inequality.
\begin{lem}[Lemma of 4.1 of \cite{GrSo}]\label{Orthogonality}
For all positive integers $n$ we have 
$$ \sum_{D\in \F} \chi_D(n)\ll x^{1/2}n^{1/4}\log n,$$
if $n$ is not a perfect square. On the other hand if $n=m^2,$ then 
$$ \sum_{D\in \F} \chi_D(n)=\frac{6}{\pi^2}x\prod_{p|m}\left(\frac{p}{p+1}\right)+ O\big(x^{1/2} \tau(m)\big).$$
\end{lem}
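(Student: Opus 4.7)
The plan is to split on whether $n$ is a perfect square, and in each case reduce to Pólya--Vinogradov applied to a non-principal character in the variable $D$.

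Consider first the square case $n = m^2$. Since $\chi_D$ is completely multiplicative with values in $\{-1,0,1\}$, we have $\chi_D(m^2) = \chi_D(m)^2 = \mathbf{1}_{(D,m)=1}$, so
\[
\sum_{D \in \F} \chi_D(m^2) \;=\; \#\bigl\{D \in \F : (D,m)=1\bigr\}.
\]
I would split $\F$ into the two defining families (squarefree $d \equiv 1 \pmod 4$, and $4d$ with $d$ squarefree and $d \equiv 2, 3 \pmod 4$, plus the analogous negative versions), detect the squarefree condition via $\mathbf{1}_{\text{sqfree}}(d) = \sum_{k^2 \mid d}\mu(k)$, and detect $(d,m)=1$ by an analogous Möbius identity. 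A standard computation then produces the main term $\frac{6}{\pi^2}x\prod_{p\mid m}\frac{p}{p+1}$: the constant $\frac{6}{\pi^2}$ is the density of squarefree integers (summed over the two types of discriminants), and each local factor arises from $\frac{1-1/p}{1-1/p^2} = \frac{p}{p+1}$ when excluding $p\mid D$. The error $O(\sqrt{x}\tau(m))$ falls out by truncating the Möbius sums at height $\sqrt{x}$ and estimating counts of squarefree integers in residue classes modulo $4m^2$.

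For the non-square case, write $n = n_0 m^2$ with $n_0 > 1$ squarefree, so that $\chi_D(n) = \chi_D(n_0)\mathbf{1}_{(D,m)=1}$, and it suffices to bound $\sum_{D \in \F,\,(D,m)=1} \chi_D(n_0)$. The key observation is that, as a function of $D$, the Kronecker symbol $\chi_D(n_0) = \left(\tfrac{D}{n_0}\right)$ factors as a product of Legendre symbols $\left(\tfrac{D}{p}\right)$ for odd primes $p \mid n_0$ (together with a factor depending only on $D \bmod 8$ when $n_0$ is even), and so, after fixing $D$ in a residue class modulo $8$, it becomes a non-principal Dirichlet character $\psi$ of modulus dividing $8n_0$; non-principality is forced by $n_0 > 1$ being squarefree. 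To restrict to fundamental discriminants, I detect the squarefree condition by Möbius, producing a double sum
\[
\sum_{k \le \sqrt{x},\,(k,n_0)=1} \mu(k) \sum_{e \le x/k^2,\, e \text{ in a residue class}} \psi(k^2 e).
\]
For each $k$ the inner sum satisfies two bounds: Pólya--Vinogradov gives $O(\sqrt{n_0}\log n_0)$ uniformly in $k$, and trivially the sum is $O(x/k^2)$. Truncating at $K := \sqrt{x}/n_0^{1/4}$, using Pólya--Vinogradov for $k \le K$ (contributing $\ll K\sqrt{n_0}\log n_0 = \sqrt{x}\,n_0^{1/4}\log n_0$) and the trivial bound for $k > K$ (contributing $\ll x/K = \sqrt{x}\,n_0^{1/4}$), delivers the claimed bound $O(\sqrt{x}\,n^{1/4}\log n)$.

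The main obstacle is bookkeeping rather than any conceptual difficulty: one must carefully track the contributions of the two types of fundamental discriminants, the parity of $n_0$ (which determines whether the modulus of $\psi$ is $n_0$, $4n_0$, or $8n_0$), the sign contributions from quadratic reciprocity as $D$ varies modulo $8$, the vanishing of terms with $(k,n_0) > 1$, and the truncation of Möbius sums. Conceptually everything reduces to Pólya--Vinogradov applied to a non-principal character in the variable $D$, balanced against the trivial bound via the truncation parameter $K$, exactly as in Granville--Soundararajan \cite{GrSo}.
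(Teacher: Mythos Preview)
The paper does not supply its own proof of this lemma; it simply quotes it from Granville--Soundararajan. Your sketch is the standard argument used there (P\'olya--Vinogradov for the non-principal character $D\mapsto\left(\tfrac{D}{n}\right)$, balanced against the trivial bound after M\"obius-sieving for squarefreeness), so your approach is correct and matches the cited source; the only point to tidy is that the coprimality condition $(D,m)=1$ effectively enlarges the modulus of $\psi$ from $8n_0$ to a divisor of $8n_0 m\le 8n$, but this is harmless since the final bound is stated in terms of $n$ anyway.
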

Note that by taking $n=1$ in Lemma \ref{Orthogonality} we get 
\begin{equation}\label{AsympF}
|\F|=\frac{6}{\pi^2}x + O\big(\sqrt{x}\big).
\end{equation}

With Lemma \ref{Orthogonality} at our disposal, we prove the following large sieve type result.

 \begin{lem}\label{LargeSieve}
Let $\{a(p)\}_{p}$ be a sequence of real numbers indexed by the primes. Let $x$ be large and $2\leq y\leq z$ be real numbers. Then for all positive integers $k$ such that $1\leq k\leq \log x/(5\log z)$ we have
\begin{equation}\label{LargeSieve1}
\sum_{D\in \F}\Big|\sum_{y\leq p\leq z}a(p)\chi_D(p)\Big|^{2k}\ll x\left(k\sum_{y\leq p\leq z}a(p)^2\right)^k+x^{5/8}\left(\sum_{y\leq p\leq z}|a(p)|\right)^{2k}.
\end{equation}
In particular, uniformly for $\sigma\geq 1/2$ we have 
\begin{equation}\label{LargeSieve2}
\sum_{D\in \F}\left|\sum_{y\leq p\leq z}\frac{\chi_D(p)\log p}{p^{\sigma}}\right|^{2k}\ll x\left(C_1 k (\log z)^2\right)^k,
\end{equation} 
for some absolute constant $C_1>0$. 

\end{lem}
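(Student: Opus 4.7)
The plan is to expand
\begin{equation*}
\sum_{D\in\F}\Bigl|\sum_{y\le p\le z}a(p)\chi_D(p)\Bigr|^{2k}=\sum_{y\le p_1,\dots,p_{2k}\le z}a(p_1)\cdots a(p_{2k})\sum_{D\in\F}\chi_D(p_1\cdots p_{2k}),
\end{equation*}
and split the outer sum according to whether the product $n:=p_1\cdots p_{2k}$ is a perfect square. Lemma~\ref{Orthogonality} tells us that square tuples contribute a main term of size $\asymp x$ (with density bounded by $6/\pi^2$), while non-square tuples contribute an error controlled by $x^{1/2}n^{1/4}\log n$.

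For the main term, the square condition forces every prime in the multiset $\{p_1,\dots,p_{2k}\}$ to appear with even multiplicity. Grouping such configurations by their underlying multiset, the dominant contribution comes from pairings (each prime appearing exactly twice) and is bounded by
\begin{equation*}
x\cdot(2k-1)!!\Bigl(\sum_{y\le p\le z}a(p)^2\Bigr)^k\ll x\Bigl(Ck\sum_{y\le p\le z}a(p)^2\Bigr)^k
\end{equation*}
for an absolute constant $C>0$, using $(2k-1)!!\le(2k)^k$. Configurations in which some prime appears with multiplicity $\ge 4$ are lower order and absorbed into the same bound by standard multinomial bookkeeping. For the non-square contribution, bound trivially $n^{1/4}\le z^{k/2}$ and $\log n\le 2k\log z$; the hypothesis $k\le\log x/(5\log z)$ then yields $z^{k/2}\le x^{1/10}$ and $2k\log z\ll x^{1/40}$, so this part is
\begin{equation*}
\ll x^{1/2}\cdot x^{1/10}\cdot 2k\log z\Bigl(\sum_{y\le p\le z}|a(p)|\Bigr)^{2k}\ll x^{5/8}\Bigl(\sum_{y\le p\le z}|a(p)|\Bigr)^{2k},
\end{equation*}
which completes the proof of~\eqref{LargeSieve1}.

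For~\eqref{LargeSieve2}, I apply~\eqref{LargeSieve1} with $a(p)=\log p/p^\sigma$. Mertens' theorem (uniformly in $\sigma\ge 1/2$) gives $\sum_{y\le p\le z}(\log p)^2/p^{2\sigma}\le\sum_{p\le z}(\log p)^2/p\ll(\log z)^2$, while the prime number theorem together with partial summation gives $\sum_{y\le p\le z}(\log p)/p^\sigma\ll\sqrt{z}$. The error term in~\eqref{LargeSieve1} then becomes at most $x^{5/8}\cdot z^k\le x^{5/8+1/5}<x$ under the constraint on $k$, so it is absorbed into the main term, yielding the stated bound.

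The main obstacle is the careful multinomial bookkeeping in the main term when primes appear with multiplicity greater than two; this is standard for moment-method large sieves (in the spirit of~\cite{BaMo} and~\cite{GrSo}), and ultimately reflects the Gaussian moment identity $\ex[X^{2k}]=(2k-1)!!\sigma^{2k}$ for $X\sim\mathcal{N}(0,\sigma^2)$, which governs the model for $\sum_p a(p)\chi_D(p)$ under averaging over $D$.
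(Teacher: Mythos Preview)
Your proof is correct and follows essentially the same approach as the paper: expand the $2k$-th moment, split into square and non-square products, apply Lemma~\ref{Orthogonality} to each piece, and then specialize to $a(p)=(\log p)/p^\sigma$ for~\eqref{LargeSieve2}.

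Two small remarks. First, your bound $(2k-1)!!\le (2k)^k$ gives $x(Ck\sum a(p)^2)^k$ with $C=2$, which carries an extra factor $2^k$ and so does not literally yield~\eqref{LargeSieve1} as stated (the $\ll$ there has an absolute implied constant, not one growing with $k$). This is fixed by the sharper inequality $(2k-1)!!=1\cdot 3\cdots(2k-1)\le k^k$, which follows from AM--GM since the arithmetic mean of $1,3,\dots,2k-1$ is $k$; the paper uses exactly this. Second, your separate discussion of ``configurations with multiplicity $\ge 4$'' is unnecessary: the overcounting argument via pairings already bounds \emph{all} square tuples at once, since any tuple with every prime in even multiplicity is compatible with at least one pairing. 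The paper simply writes the diagonal contribution as $\ll x\frac{(2k)!}{2^k k!}(\sum a(p)^2)^k$ without further case analysis.
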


\begin{proof}
We start by establishing \eqref{LargeSieve1}. We have 
$$ \sum_{D\in \F}\Big|\sum_{y\leq p\leq z}a(p)\chi_D(p) \Big|^{2k}= \sum_{D\in \F}\sum_{y\leq p_1,...,p_{2k}\leq z}a(p_1) \cdots a(p_{2k})\chi_d(p_1...p_{2k}).$$
The diagonal terms $p_1...p_{2k}=\square$ contribute
$$ \ll x\frac{(2k)!}{2^k k!}\left(\sum_{y\leq p\leq z}a(p)^2\right)^k\leq x\left(k\sum_{y\leq p\leq z}a(p)^2\right)^k.$$
To handle the off-diagonal terms we use Lemma \ref{Orthogonality}. Thus if $p_1p_2...p_{2k}\neq \square$ and $p_i\leq z$ then Lemma \ref{Orthogonality} gives
$$
\sum_{D\in \F}\chi_d(p_1p_2...p_{2k})\ll x^{1/2}z^{k/2}\log x\ll x^{5/8},
$$
which implies that the contribution of these terms is
$$ \ll x^{5/8}\left(\sum_{y\leq p\leq z}|a(p)|\right)^{2k}.$$
This completes the proof of \eqref{LargeSieve1}. Now, to prove \eqref{LargeSieve2}, we choose $a_p= (\log p)/p^{\sigma}$ in \eqref{LargeSieve1}. This gives 
\begin{align*}
\sum_{D\in \F}\left|\sum_{y\leq p\leq z}\frac{\chi_D(p)\log p}{p^{\sigma}}\right|^{2k} 
& \ll x\left(k\sum_{y\leq p\leq z}\frac{(\log p)^2}{p}\right)^k+x^{5/8}\left(\sum_{y\leq p\leq z}\frac{\log p}{\sqrt{p}}\right)^{2k}\\
& \ll x\left(C_1 k (\log z)^2\right)^k,
\end{align*}
for some positive constant $C_1$, since
$\sum_{y\leq p\leq z} (\log p)/\sqrt{p} \ll \sqrt{z}\ll  x^{1/(10k)}.$
\end{proof}

We will need the following results on the mean-square value of real character sums, which are due to Jutila \cite{Jut1}.

\begin{lem}[Theorem 1 and the Corollary of \cite{Jut1}]\label{lemmaJutila}
Let $x\geq 3$ and $N\geq 1$. Then 
\begin{equation}\label{lem.Jutila}
  \sum_{D\in \F} \Big|\sum_{n\leq N}  \chi_D(n)\Big|^2 \ll xN(\log x)^8. 
\end{equation}
Moreover, we have 
\begin{equation}\label{Jutila}
    \sum_{\substack{n\leq N \\ n\neq \square} } \Big|\sum_{D\in \F}\chi_D(n)\Big|^2 \ll xN (\log N)^{10},
\end{equation} where the outer summation runs over integers which are not perfect squares.
\end{lem}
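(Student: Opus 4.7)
The plan is to reduce both estimates to averages of quadratic characters by opening the squares, and then extract cancellation via Poisson summation, since the pointwise bound of Lemma \ref{Orthogonality} alone is insufficient to reach the target.

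For \eqref{lem.Jutila}, I would expand
\begin{equation*}
\sum_{D \in \F} \Big|\sum_{n \leq N} \chi_D(n)\Big|^2 = \sum_{n_1, n_2 \leq N} \sum_{D \in \F} \chi_D(n_1 n_2)
\end{equation*}
and split according to whether $n_1 n_2$ is a square. The diagonal contains $O(N \log N)$ pairs --- parametrised as $n_1 = a b^2$, $n_2 = a c^2$ with $a$ squarefree --- each contributing $O(x)$ by Lemma \ref{Orthogonality}, so the diagonal is $O(x N \log N)$. For the off-diagonal terms $n_1 n_2 \neq \square$ the pointwise estimate from Lemma \ref{Orthogonality} gives only $O(x^{1/2} N^{5/2} \log N)$, which is much weaker than the target once $N$ grows. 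To recover the saving I would rewrite $\chi_D(n_1 n_2)$ as a Kronecker symbol $\left(\frac{n_1 n_2}{|D|}\right)$, use M\"obius inversion to restrict to fundamental discriminants, and apply Poisson summation modulo $4 n_1 n_2$ to the $D$-sum. Uniform bounds on the resulting incomplete Gauss sums produce the missing square-root cancellation on average, and summing over $n_1, n_2$ yields the bound $x N (\log x)^8$.

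For \eqref{Jutila} I would proceed in a parallel fashion. The pointwise bound from Lemma \ref{Orthogonality} on a single non-square $n$ gives only $\sum_{n\le N,\, n\ne\square} |\sum_D \chi_D(n)|^2 \ll x N^{3/2}(\log N)^2$, which is too weak. I would expand the square, swap the order of summation, and at each pair $(n_1, n_2)$ apply Poisson summation on the $D$-variable in arithmetic progressions modulo $4 n_1 n_2$; the oscillation of the associated Gauss sums yields the additional $N^{-1/2}$ factor needed to reach $x N (\log N)^{10}$. Alternatively one can deduce \eqref{Jutila} from \eqref{lem.Jutila} by a duality/large sieve argument, provided one handles separately the perfect square values of $n$ via Lemma \ref{Orthogonality}, which is the reason for the exclusion $n \neq \square$ in the statement.

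The hard part in both statements is the same: extracting average square-root cancellation from Kronecker symbols $\left(\frac{\cdot}{|D|}\right)$ as $D$ varies over fundamental discriminants, which is precisely the gain over Lemma \ref{Orthogonality} applied pointwise that is required. This is the technical heart of Jutila's argument and cannot be avoided by elementary manipulations of the orthogonality relation.
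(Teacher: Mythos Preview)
The paper does not prove this lemma at all: it is stated with the attribution ``Theorem~1 and the Corollary of \cite{Jut1}'' and simply quoted from Jutila's paper, with no argument given. So there is no proof in the paper to compare your proposal against.

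Your sketch is a reasonable outline of the kind of argument Jutila uses (opening the square, separating diagonal from off-diagonal, and extracting cancellation on average in the $D$-sum via Poisson summation in residue classes), and you correctly identify that the pointwise P\'olya--Vinogradov bound of Lemma~\ref{Orthogonality} is too weak by itself once $N$ is large. But as written this is a plan rather than a proof: the actual execution requires careful treatment of the squarefree support of fundamental discriminants, the completion of the Gauss sums, and a non-trivial summation over $n_1,n_2$ to recover the stated logarithmic powers. If you intend to supply a self-contained proof rather than cite Jutila, those details are where the work lies.
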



We end this section by recording several large sieve inequalities for real characters. We start with the two standard estimates which are straightforward applications of the P\'olya-Vinogradov inequality.

\begin{lem}\label{BabyLargeSieve}
Let $x, N\geq 2$. Then for arbitrary complex numbers $a_n$ we have 
\begin{equation}\label{Eq.BabyLargeSieve}
\sum_{D\in \F} \Big|\sum_{n\leq N} a_n \chi_D(n)\Big|^2 \ll (x+ N^2 \log N) \sum_{\substack{m, n \leq N\\ mn= \square}} |a_ma_n|,
\end{equation}
and
\begin{equation}\label{Eq.BabyLargeSieve2}
\sum_{D\in \F} \Big|\sum_{n\leq N} a_n \chi_D(n)\Big|^2 \ll x \sum_{\substack{m, n \leq N\\ mn= \square}} |a_ma_n| + \log N \left(\sum_{n\leq N} |a_n|n^{1/2}\right)^2.
\end{equation}
\end{lem}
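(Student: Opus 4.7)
The plan is to expand the square and bound the resulting ``diagonal'' and ``off-diagonal'' contributions separately. Writing $S := \sum_{D\in\F} |\sum_{n\leq N} a_n \chi_D(n)|^2$ and expanding gives
\[
S = \sum_{m,n \leq N} a_m \overline{a_n} \sum_{D \in \F} \chi_D(mn).
\]
The terms with $mn=\square$ contribute $\ll x\sum_{mn=\square}|a_m a_n|$ because for such $mn$ the inner sum is trivially $O(|\F|)=O(x)$. So both inequalities reduce to bounding the off-diagonal contribution with $mn\neq\square$.

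For the off-diagonal, the key input is a Pólya--Vinogradov type bound on the character sum $D\mapsto \chi_D(mn)$. For fixed non-square $mn\leq N^2$, the map $D\mapsto \chi_D(mn)$ is (essentially, up to the parity conditions defining fundamental discriminants) a non-principal Dirichlet character in $D$ of modulus $\ll mn$, so the Pólya--Vinogradov inequality yields
\[
\Bigl|\sum_{D\in\F} \chi_D(mn)\Bigr| \ll \sqrt{mn}\,\log(mn) \ll \sqrt{mn}\,\log N.
\]
Plugging this into the off-diagonal sum gives
\[
\sum_{\substack{m,n\leq N\\ mn\neq\square}} |a_m a_n|\sqrt{mn}\,\log N = \log N\,\Bigl(\sum_{n\leq N}|a_n|\sqrt{n}\Bigr)^2,
\]
and combining with the diagonal bound yields \eqref{Eq.BabyLargeSieve2}.

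Finally, \eqref{Eq.BabyLargeSieve} will follow from \eqref{Eq.BabyLargeSieve2} by a routine Cauchy--Schwarz: since $n\leq N$,
\[
\Bigl(\sum_{n\leq N}|a_n|n^{1/2}\Bigr)^2 \leq \Bigl(\sum_{n\leq N} n\Bigr)\Bigl(\sum_{n\leq N}|a_n|^2\Bigr) \leq N^2 \sum_{n\leq N}|a_n|^2 \leq N^2 \sum_{mn=\square}|a_m a_n|,
\]
where in the last step one simply retains the diagonal $m=n$ terms. Multiplying by $\log N$ absorbs the second term of \eqref{Eq.BabyLargeSieve2} into $N^2 \log N\sum_{mn=\square}|a_m a_n|$, and combining with the $x\sum_{mn=\square}|a_m a_n|$ term produces \eqref{Eq.BabyLargeSieve}. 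The only step requiring any care is the reduction of $D\mapsto\chi_D(mn)$ to a genuine Dirichlet character modulo $O(mn)$ so that Pólya--Vinogradov applies cleanly; this amounts to handling the parity/squarefreeness conditions defining $\F$, which can be dealt with by standard Möbius sieving and introduces at most constants and $\log N$ factors, already absorbed in the bound.
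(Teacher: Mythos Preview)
Your overall strategy is the standard one and is correct: expand the square, bound the diagonal trivially by $x$, and handle the off-diagonal via a P\'olya--Vinogradov bound for the character $D\mapsto\chi_D(mn)$. Your deduction of \eqref{Eq.BabyLargeSieve} from \eqref{Eq.BabyLargeSieve2} by Cauchy--Schwarz is clean and correct. (The paper itself does not give a proof but simply cites the references, so you are actually filling in what the paper omits.)

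There is, however, a genuine gap in your off-diagonal bound. You assert that restricting from all $|D|\le x$ to $D\in\F$ via ``standard M\"obius sieving'' costs only constants and $\log N$ factors. This is not so: detecting squarefreeness by M\"obius introduces a sum $\sum_{d\le\sqrt{x}}\mu(d)(\cdots)$, and bounding the inner character sum by P\'olya--Vinogradov and then summing trivially over $d$ loses a factor of $\sqrt{x}$. In fact, the best unconditional pointwise bound is precisely Lemma~\ref{Orthogonality}, namely $\sum_{D\in\F}\chi_D(n)\ll x^{1/2}n^{1/4}\log n$, and one cannot hope for $O(\sqrt{n}\log n)$ uniformly in $x$: already for a fixed small non-square such as $n=5$, the Dirichlet series for $\sum_{D\text{ sqfree}}\chi_D(5)D^{-s}$ equals $L(s,\psi)/\zeta(2s)$ times a finite Euler factor, so by Perron the partial sum is governed by the zeros of $\zeta(2s)$ and is only $O(x^{1/2+\varepsilon})$ unconditionally.

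The fix is simple and is what the cited references actually do: use positivity rather than M\"obius. Since $\chi_D(n)=\bigl(\tfrac{D}{n}\bigr)$ (Kronecker symbol) and every summand is nonnegative,
\[
\sum_{D\in\F}\Big|\sum_{n\le N}a_n\chi_D(n)\Big|^2\ \le\ \sum_{|D|\le x}\Big|\sum_{n\le N}a_n\Big(\tfrac{D}{n}\Big)\Big|^2,
\]
where on the right $D$ runs over \emph{all} integers. After expanding, for $mn\neq\square$ the map $D\mapsto\bigl(\tfrac{D}{mn}\bigr)$ is a genuine non-principal Dirichlet character of modulus at most $4mn$, and P\'olya--Vinogradov now applies directly with no sieving to give $\sum_{|D|\le x}\bigl(\tfrac{D}{mn}\bigr)\ll\sqrt{mn}\log(mn)$. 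With this one change, the remainder of your argument (including the Cauchy--Schwarz step) goes through verbatim.
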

\begin{proof} The first inequality corresponds to Lemma 1 of \cite{BaMo}. The second is a variant which can be found in Lemma 4 of \cite{Armon}.
\end{proof}

Next we state Heath-Brown's famous large sieve inequality for real characters. 
\begin{lem}\cite[Corollary $2$]{HBsieve}\label{HB-largesieve}
Let $x,N \geq 2$. Then for arbitrary complex numbers $a_n$ and any $\varepsilon>0$ we have 
$$  \sum_{D \in \F} \Big\vert \sum_{n \leq  N} a_n \chi_D(n)\Big\vert^2 \ll_{\varepsilon} (xN)^{\varepsilon}(x+N)\sum_{\substack{m,n \leq N \\ mn=\square}} \vert a_m a_n\vert. 
$$

\end{lem}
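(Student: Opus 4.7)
The plan is to follow Heath-Brown's original approach, which is based on Poisson summation over the discriminants rather than on a term-by-term application of the Pólya-Vinogradov inequality (the latter would only recover Lemma \ref{BabyLargeSieve}). First I would reduce to the case where $D$ ranges over odd positive squarefree integers, since fundamental discriminants differ from such integers only by an explicit sign and a controllable factor related to powers of $2$, which perturbs only the implied constants.

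Next, I would expand the square and exchange the order of summation:
\begin{equation*}
\sum_{D \in \F} \Big\vert \sum_{n \leq N} a_n \chi_D(n)\Big\vert^2 = \sum_{m,n \leq N} a_m \overline{a_n} \sum_{D \in \F} \chi_D(mn),
\end{equation*}
and split the right-hand side into a diagonal piece ($mn=\square$) and an off-diagonal piece ($mn\neq \square$). The diagonal contribution, by the main term of Lemma \ref{Orthogonality}, is bounded above by $\ll x \sum_{mn=\square}|a_m a_n|$, which is acceptable. For the off-diagonal, the bound from Lemma \ref{Orthogonality} gives only $\sum_D \chi_D(mn) \ll x^{1/2}(mn)^{1/4}\log(mn)$, which is not strong enough after summing over $m,n$ to recover the $(x+N)^{1+\varepsilon}$ factor.

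The key improvement comes from applying quadratic reciprocity to swap $\chi_D(mn)=\left(\tfrac{mn}{D}\right)$ (up to a sign depending on $D$ and $mn$ modulo $8$) and then attacking the sum $\sum_{D\leq x, \ \text{squarefree}} \left(\tfrac{k}{D}\right)$, where $k$ denotes the squarefree kernel of $mn$. Detecting the squarefree condition by Möbius inversion $\sum_{d^2\mid D}\mu(d)$ reduces the task to estimating complete character sums of the shape $\sum_{D\leq x/d^2}\left(\tfrac{k}{D}\right)$. Poisson summation modulo $k$ (or $4k$, to accommodate the sign factors inherited from reciprocity) expresses this as a sum of Gauss sums weighted by Fourier coefficients of the characteristic function of a short interval, and the standard evaluation $|\tau(\chi)|\leq k^{1/2}$ yields an estimate of strength $(xk)^\varepsilon(x/d^2+k)$ in place of the weaker Pólya-Vinogradov bound.

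Feeding this refined estimate back into the off-diagonal and then summing dyadically over the squarefree part $k$ produces a total contribution of size $(xN)^\varepsilon(x+N)\sum_{mn=\square}|a_m a_n|$, as desired. The main obstacle is the careful treatment of the \emph{near-diagonal} terms in which $mn=k^2\ell$ with $\ell$ small but non-trivial: such terms straddle the diagonal/off-diagonal split and must be absorbed into the main term rather than the error, which requires a stratified summation over the squarefree kernel together with precise tracking of the $2$-adic valuations of $D$ and $n$ throughout the reciprocity flip. These sign and parity complications are what make the argument technically intricate, even though the underlying strategy (Poisson summation plus the trivial Gauss sum bound) is conceptually transparent.
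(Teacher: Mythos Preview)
The paper does not prove this lemma at all: it is simply quoted as Corollary~2 of Heath-Brown's paper \cite{HBsieve}, with no argument supplied. So there is no ``paper's own proof'' to compare against; what can be assessed is whether your sketch is an accurate outline of Heath-Brown's argument.

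Your sketch contains the right ingredients (reduction to odd squarefree moduli, quadratic reciprocity, Poisson summation) but the step you describe would not prove the lemma. You propose to apply Poisson summation to each individual inner sum $\sum_{D\le x/d^2}\left(\tfrac{k}{D}\right)$ and claim this yields a bound of shape $(xk)^{\varepsilon}(x/d^2+k)$. But for a single such sum, Poisson summation recovers nothing beyond P\'olya--Vinogradov (the dual sum has $\asymp k/(x/d^2)$ terms, each of size $\sqrt{k}$), and feeding P\'olya--Vinogradov back in simply reproduces Lemma~\ref{BabyLargeSieve}, not the stronger $(x+N)$ factor. Moreover, the squarefree kernel $k$ of $mn$ ranges up to $N^2$, not $N$, so even granting your claimed pointwise bound the summation over $m,n$ would lose a factor of $N$.

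What is missing is the central idea of Heath-Brown's proof: Poisson summation is applied not to each sum over $D$ separately, but to the entire bilinear form, and the output is \emph{another} bilinear form of the same type with the roles of the two variables swapped and the ranges rescaled. Combined with the duality $B(M,N)=B(N,M)$ of the optimal constant, this yields a recursive inequality relating $B(M,N)$ to values of $B$ at smaller parameters, and it is the iteration of this recursion that produces the factor $(M+N)^{1+\varepsilon}$. Your outline of ``near-diagonal'' difficulties is a red herring: the genuine obstacle is that a one-shot Poisson summation is insufficient, and the self-referential structure of the estimate must be exploited.
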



\section{Approximating $\log L(s, \chi_D)$ and $L'/L(s, \chi_D)$ in the vicinity of the central point}
\subsection{Short Dirichlet approximations of $-L'/L(s, \chi_D)$ close to the central point}
In this section we prove the following proposition, which shows that if $s=1/2+ 1/g(x)$ where $g(x)\to \infty$ in a certain range in terms of $x$, 
then for almost all $D\in \F$, we can approximate $-L'/L(s, \chi_D)$ by short Dirichlet polynomials over primes that are around $e^{g(x)}$. This will be one of the key ingredients in the proof of Theorem \ref{Main}. 
\begin{pro} \label{TruncationPrimes}
Let $x$ be large, and $s=1/2+ 1/g(x)$ where $(\log\log x)^2\leq g(x) \leq \sqrt{\log x}/(\log\log x)^2$. Let $2\leq M\leq \log\log x$ be a parameter and put 
$$u(s)= \exp\left(\frac{g(x)}{M}\right), \text{ and } v(s)= \exp\big( g(x) M\big).$$ 
The number of fundamental discriminants $|D|\leq x$ such that 
$$ 
\left|\frac{L'}{L}(s,\chi_D)+ \sum_{u(s) < p < v(s)} \frac{\chi_D(p)\log p}{p^s} \right| >  g(x), 
$$
is 
$$
\ll  x \exp\left(-\frac{M^2}{60}\right).
$$
\end{pro}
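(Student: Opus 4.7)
The plan is to bound the $2k$-th moment
$$\sum_{D \in \F} |\Delta_D(s)|^{2k}, \qquad \Delta_D(s) := \frac{L'}{L}(s, \chi_D) + \sum_{u(s) < p < v(s)} \frac{\chi_D(p) \log p}{p^s},$$
with $k$ of order $M^2$, and then invoke Markov's inequality. Starting from the identity $-L'/L(s, \chi_D) = \sum_{n \geq 1} \Lambda(n) \chi_D(n)/n^s$ (interpreted via analytic continuation into the critical strip), I separate off the contribution from prime powers $p^r$ with $r \geq 2$, which converges absolutely and is $O(1)$ uniformly in $D$ for $\sigma := \Re(s) > 1/2$, hence negligible compared to the threshold $g(x) \geq (\log\log x)^2$.

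The core of the argument is a two-stage reduction to Lemma \ref{LargeSieve}. In the first stage, a smoothed Perron-type contour integral with truncation $V$ slightly larger than $v(s)$ yields an approximation of the form
$$-\frac{L'}{L}(s, \chi_D) = \sum_{p \leq V} \frac{\chi_D(p) \log p}{p^s} \phi(p) + O(1) + E_D(s),$$
with $\phi$ a smooth weight equal to $1$ on $[u(s), v(s)]$ and supported in $[u(s)/2, V]$, and $E_D(s)$ coming from shifting the contour past the critical line. For all but $O\bigl(x \exp(-M^2/60)\bigr)$ discriminants $D \in \F$, one has $|E_D(s)| \leq g(x)/3$; this is proved by combining a zero-density estimate for the family $\{L(\cdot, \chi_D)\}_{D \in \F}$ (controlling the $D$ with zeros of $L$ very close to $s$) with a second-moment bound for $L'/L(s,\chi_D)$ on a suitable vertical line, obtained via the functional equation.

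In the second stage, the weighted sum is replaced by the sharp-cutoff sum $\sum_{u(s) < p < v(s)}$. The resulting tail contributions, concentrated on $p \in [u(s)/2, u(s)]$ and $p \in [v(s), V]$, are each attacked by inequality \eqref{LargeSieve2} with $k \asymp M^2$. For the low tail,
$$\sum_{D \in \F} \left|\sum_{p \leq u(s)} \frac{\chi_D(p) \log p}{p^s}\right|^{2k} \ll x \bigl(C_1 k (\log u(s))^2\bigr)^k = x \bigl(C_1 k\, g(x)^2/M^2\bigr)^k,$$
and Markov at threshold $g(x)/6$ produces an exception set of size $\ll x (36 C_1 k/M^2)^k$. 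Taking $k$ a suitable small multiple of $M^2$ renders this $\ll x\exp(-M^2/60)$. The high tail $p \in (v(s), V]$ is handled identically, using $\log V \asymp g(x) M$; the moment condition $k \leq \log x/(5 \log V)$ of Lemma \ref{LargeSieve} is ensured by the hypotheses $g(x) \leq \sqrt{\log x}/(\log\log x)^2$ and $M \leq \log\log x$, which give $k \log V \asymp M^3 g(x) \ll \sqrt{\log x} \log\log x = o(\log x)$.

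The main obstacle is the first stage: approximating $-L'/L(s, \chi_D)$ by a short Dirichlet polynomial at $s$ very close to the critical line, without assuming GRH. This forces the use of zero-density estimates for real Dirichlet $L$-functions in a neighborhood of $\Re w = 1/2$ (in the spirit of Jutila's or Heath-Brown's results), combined with pointwise bounds on the residual contour integral. The interplay of these bounds with the moment exponent $k \asymp M^2$ is presumably what dictates the upper restrictions $g(x) \leq \sqrt{\log x}/(\log\log x)^2$ and $M \leq \log\log x$ in the proposition.
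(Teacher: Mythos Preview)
Your overall architecture (zero-density to approximate $-L'/L$ by a Dirichlet polynomial, then Lemma~\ref{LargeSieve} plus Markov for the tails) matches the paper's, but two of the steps as written do not work.

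\textbf{Prime squares are not $O(1)$.} You claim the contribution of prime powers $p^r$ with $r\ge 2$ ``converges absolutely and is $O(1)$ uniformly in $D$ for $\sigma>1/2$''. Absolute convergence for each fixed $s$ is not the same as a bound uniform in $s$. In fact $\sum_{p\nmid D}(\log p)/p^{2s}=(1/2+o(1))g(x)$ as $s=1/2+1/g(x)\to 1/2^+$, by the pole of $\sum_p(\log p)/p^{\sigma}$ at $\sigma=1$. The paper keeps track of this: it is absorbed into the threshold (this is why the intermediate cutoff in the paper is $\tfrac{5}{8}g(x)$ rather than, say, $g(x)/3$). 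With your budget ($O(1)$ for prime powers, $g(x)/3$ for the contour error, $g(x)/6$ for each tail) the true prime-square term of size $g(x)/2$ already pushes the total past $g(x)$.

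\textbf{The high tail cannot be ``handled identically''.} For the low tail $p\le u(s)$ you invoke \eqref{LargeSieve2}, obtaining $x(C_1 k(\log u(s))^2)^k=x(C_1 k\,g(x)^2/M^2)^k$, and Markov at level $g(x)/6$ gives $x(36C_1k/M^2)^k$; fine. But applying \eqref{LargeSieve2} to the range $v(s)<p\le V$ with $\log V\asymp g(x)M$ yields $x(C_1 k\,g(x)^2M^2)^k$, and after Markov you get $x(36C_1kM^2)^k$, which \emph{grows} with $k$. No choice of $k\asymp M^2$ (or any $k\ge 1$) rescues this. The missing idea is that for $p\ge v(s)=\exp(g(x)M)$ one has $p^{2s-1}=p^{2/g(x)}\ge e^{2M}$, so the actual variance satisfies
\[
\sum_{v(s)\le p\le V}\frac{(\log p)^2}{p^{2s}}\ll g(x)^2 M e^{-2M},
\]
and one must feed this into \eqref{LargeSieve1} (not the crude bound \eqref{LargeSieve2}). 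After Markov one then gets $x(36kMe^{-2M})^k$, and the choice $k=\lfloor M^2/60\rfloor$ works. This exponential gain $e^{-2M}$ is precisely what produces the bound $x\exp(-M^2/60)$; without it the proposition is unreachable by your method. The paper carries this out with $V$ replaced by $y^2=(\log x)^{24g(x)}$ (its smoothing is $e^{-n/y}$ rather than a compactly supported $\phi$), but the point is the same.

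Your first stage is in the right spirit; in the paper it is Lemma~\ref{ZeroDensity}, which directly gives the smoothed approximation $-L'/L(s,\chi_D)=\sum_n\Lambda(n)\chi_D(n)n^{-s}e^{-n/y}+O(1/\log|D|)$ outside an exceptional set of size $O(x^{1-(s-1/2)/5})=O(x\exp(-\log x/(5g(x))))$, which is negligible compared with $x\exp(-M^2/60)$ under the stated hypothesis on $g(x)$.
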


To prove this result, we need the following standard lemma which is established using zero density estimates for the family $\{L(s, \chi_D)\}_{D\in \F}$, and follows for example from the proof of Lemma 2 of \cite{BaMo}.  
 
\begin{lem}\label{ZeroDensity}\label{ApproximationLarge} Let $x$ be large and $1/2+ (\log\log x)^2/\log x \leq s \leq 1$. Put $A=12/(s-1/2)$ and let $(\log |D|)^A\leq y\leq |D|$ be a real number.  Then for all fundamental discriminants $|D|\leq x$ except for a set $\mathcal{E}(x)$ with cardinality 
$$ |\mathcal{E}(x)| \ll x^{1-(s-1/2)/5},$$ we have 
\begin{equation}\label{ApproxLprimeL} -\frac{L'}{L}(s,\chi_D)=\sum_{n=1}^{\infty} \frac{\Lambda(n)}{n^s}\chi_D(n)e^{-n/y}+O\left(\frac{1}{\log |D|}\right),
\end{equation}
and 
\begin{equation}\label{ApproxLogL}\log L(s,\chi_D)=\sum_{n=1}^{\infty} \frac{\Lambda(n)}{(\log n)n^s}\chi_D(n)e^{-n/y}+O\left(\frac{1}{\log |D|}\right).
\end{equation}
\end{lem}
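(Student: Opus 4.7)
Plan for the proof. The two approximations \eqref{ApproxLprimeL} and \eqref{ApproxLogL} have parallel structure, so I would handle the first in detail and obtain the second by the analogous contour argument. The strategy is a Mellin--Barnes contour shift combined with a zero-density estimate for the family $\{L(s,\chi_D)\}_{D\in\F}$. Starting from the Cahen--Mellin identity $e^{-u}=\frac{1}{2\pi i}\int_{(c)}\Gamma(w)u^{-w}\,dw$ valid for $c>0$, termwise summation against the Dirichlet series of $-L'/L$ yields, for any $c>\max(0,\,1-\Re s)$,
$$\sum_{n\geq 1}\frac{\Lambda(n)\chi_D(n)}{n^s}\,e^{-n/y} \;=\; \frac{1}{2\pi i}\int_{(c)}\Gamma(w)\,y^{w}\left(-\frac{L'}{L}(s+w,\chi_D)\right)dw.$$

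Next I would push the contour leftward to the vertical line $\Re w=-\eta$ with $\eta:=(s-\tfrac12)/5$, truncated at height $|\Im w|\leq T$ with $T$ a fixed power of $\log x$. The pole of $\Gamma$ at $w=0$ contributes the residue $-L'/L(s,\chi_D)$, which is precisely the main term. The horizontal segments at heights $\pm T$ are negligible thanks to the super-exponential decay $|\Gamma(\sigma+it)|\ll e^{-\pi|t|/2}$ in the imaginary direction. The remaining contributions are the residues at zeros of $L(\cdot,\chi_D)$ crossed during the shift, namely $\rho$ with $\Re\rho\geq \tfrac12+\eta$ and $|\Im\rho|\leq T$; I need to show that these are absent for almost all $D\in\F$.

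This is where a Jutila-type zero-density estimate for real characters (the exact input invoked by Baker--Montgomery in the proof of their Lemma 2) enters: the number of $D\in\F$ admitting at least one non-trivial zero in the rectangle $\Re\rho\geq \tfrac12+\eta$, $|\Im\rho|\leq T$ is $\ll x^{1-\eta}(\log x)^{O(1)}T^{O(1)}$, which absorbs into the stated exceptional-set bound $|\mathcal{E}(x)|\ll x^{1-(s-1/2)/5}$ after fixing $T$. For every $D\notin\mathcal{E}(x)$, the shifted contour crosses no zeros, so a standard Hadamard-product estimate gives $|L'/L(s+w,\chi_D)|\ll (\log|D|)^{2}$ uniformly on $\Re w=-\eta$, $|\Im w|\leq T$. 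Combined with $|y^{w}|=y^{-\eta}=(\log|D|)^{-A\eta}=(\log|D|)^{-12/5}$ and the decay of $\Gamma$, the shifted integral is $O(1/\log|D|)$, establishing \eqref{ApproxLprimeL}. For \eqref{ApproxLogL}, I would run the same shift on
$$\sum_{n\geq 1}\frac{\Lambda(n)\chi_D(n)}{n^s\log n}\,e^{-n/y} \;=\; \frac{1}{2\pi i}\int_{(c)}\Gamma(w)\,y^{w}\log L(s+w,\chi_D)\,dw,$$
using the bound $|\log L(s+w,\chi_D)|\ll (\log|D|)^{2}$ on the shifted line (which follows by integrating $L'/L$ across the zero-free strip).

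The main obstacle I anticipate is the careful bookkeeping that ties the exponent $A=12/(s-\tfrac12)$ to the strength of the density estimate through the relation $A\eta=12/5$: the power $1/5$ in the exceptional-set bound is exactly what the zero-density inequality delivers on $\Re\rho=\tfrac12+\eta$, and the factor $12$ has to be chosen large enough that it dominates the polynomial losses from the Hadamard-product bound on $L'/L$, the truncation in $T$, and the horizontal segments of the contour. This calibration, already carried out by Baker--Montgomery in the proof of their Lemma 2, transfers to our setting with only cosmetic modifications.
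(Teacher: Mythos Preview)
Your proposal is correct and is precisely the argument underlying Lemma~2 of Baker--Montgomery, which the paper simply cites rather than reproving; the second formula \eqref{ApproxLogL} is, as you say, obtained by the same contour shift applied to $\log L$ in place of $-L'/L$.
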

\begin{proof}
The asymptotic formula \eqref{ApproxLprimeL} corresponds to Lemma 2 of \cite{BaMo}. The formula \eqref{ApproxLogL} is obtained by a slight modification of the same proof. 
\end{proof}

\begin{proof} [Proof of Proposition \ref{TruncationPrimes}]
Let $y= (\log x)^{12 g(x)}$. First,  Lemma \ref{ZeroDensity} implies that for all but at most $O(x^{1-1/(5g(x))})$ fundamental discriminants $|D|\leq x$ we have 
\begin{equation}\label{FirstTrunc}
\left|\frac{L'}{L}(s,\chi_D)+ \sum_{n=1}^{\infty} \frac{\chi_D(n)\Lambda(n)}{n^s}e^{-n/y}\right| \ll \frac{1}{\log |D|}.
\end{equation}
The contribution of the terms $n>y^2$ to the sum above is 
\begin{equation}\label{TailBoundLPrimeL}
\ll \sum_{n>y^2} \frac{\log n}{\sqrt{n}} e^{-n/y}\ll e^{-y/2}\sum_{n>y^2} \frac{\log n}{\sqrt{n}} e^{-n/(2y)} \ll e^{-y/2} \sum_{n=1}^{\infty} \frac{1}{n^2} \ll \frac{1}{\log x},
\end{equation}
since $e^{n/(2y)} \gg n^3$ if $n>y^2$, and $y$ is sufficiently large. In particular, this implies that the contribution of prime powers $p^k$ with $k\geq 3$ to the sum in \eqref{FirstTrunc} is 
\begin{equation}\label{LargePrimePowers}
 \ll  \sum_{k\geq 3} \sum_{p \leq y} \frac{\log p}{p^{k/2}}+ \frac{1}{\log x} \ll 1.
 \end{equation}
Moreover, it follows from the prime number theorem and partial summation that the contribution of the squares of primes to the sum in \eqref{FirstTrunc} is 
\begin{equation}\label{SquaresPrimes}
\sum_{p\nmid D} \frac{(\log p) e^{-p^2/y}}{p^{2s}}\leq \sum_{p} \frac{\log p}{p^{2s}}=(1/2+o(1))g(x).
\end{equation}
Now let $\mathcal{E}_1(x)$ be the set of fundamental discriminants $|D|\leq x$ such that 
$$
\left|\frac{L'}{L}(s,\chi_D)+ \sum_{p\leq y^2} \frac{\chi_D(p)\log p}{p^s}e^{-p/y}\right| > \frac{5}{8} g(x).
$$ 
Then combining the estimates \eqref{LargePrimePowers} and \eqref{SquaresPrimes} with \eqref{FirstTrunc} shows that 
\begin{equation}\label{Noise}
|\mathcal{E}_1(x)| \ll x \exp\left(-\frac{\log x}{5 g(x)}\right).
\end{equation}
Let $\mathcal{E}_2(x)$ be the set of fundamental discriminants $|D|\leq x$ such that 
$$
\left| \sum_{p\leq u(s)} \frac{\chi_D(p)\log p}{p^s}e^{-p/y}\right| > \frac{g(x)}{6}.
$$ 
Then, it follows from Lemma \ref{LargeSieve} that for all positive integers $k$ such that $k\ll  \sqrt{\log x}$ we have 
\begin{equation}\label{SmallPrimes0}
\begin{aligned}
|\mathcal{E}_2(x)| 
& \leq \left(\frac{6}{g(x)}\right)^{2k} \sum_{D\in \F}\left| \sum_{p\leq u(s)} \frac{\chi_D(p)\log p}{p^s}e^{-p/y}\right|^{2k}\\
& \ll x\left( \frac{36k}{g(x)^2} \sum_{p\leq u(s)} \frac{(\log p)^2}{p^{2s}} e^{-2p/y}\right)^k + x^{5/8}\left(\sum_{p\leq u(s)}\frac{\log p}{\sqrt{p}}\right)^{2k}\\
& \ll x \left(\frac{20k(\log u(s))^2}{g(x)^2}\right)^k+ x^{3/4} \ll x \left(\frac{20 k}{M^2}\right)^k\end{aligned}
\end{equation}
since
$$ \sum_{p\leq u(s)} \frac{(\log p)^2}{p^{2s}} e^{-2p/y}\leq \sum_{p\leq u(s)} \frac{(\log p)^2}{p} =(1/2+o(1)) (\log u(s))^2, 
$$ 
by the prime number theorem. Thus, choosing  $k= \lfloor M^2/60\rfloor$, we deduce that 
\begin{equation}\label{SmallPrimes}
|\mathcal{E}_2(x)| \ll x \exp\left(-\frac{M^2}{60}\right).
\end{equation}
Furthermore, let $\mathcal{E}_3(x)$ be the set of fundamental discriminants $|D|\leq x$ such that 
$$
\left| \sum_{ v(s)\leq p\leq y^2} \frac{\chi_D(p)\log p}{p^s}e^{-p/y}\right| > \frac{g(x)}{6}.
$$ 
Similarly to \eqref{SmallPrimes0} we deduce from Lemma \ref{LargeSieve} that for all positive integers $k$ such that $k\ll  \sqrt{\log x}$ we have 
\begin{align*}
|\mathcal{E}_3(x)| 
& \leq \left(\frac{6}{g(x)}\right)^{2k} \sum_{D\in \F}\left| \sum_{v(s)\leq p\leq y^2} \frac{\chi_D(p)\log p}{p^s}e^{-p/y}\right|^{2k}\\
& \ll x\left( \frac{36k}{g(x)^2} \sum_{v(s)\leq p\leq y^2} \frac{(\log p)^2}{p^{2s}} e^{-2p/y}\right)^k + x^{5/8}\left(\sum_{p\leq y^2}\frac{\log p}{\sqrt{p}}\right)^{2k} \\
&
\ll x\left(\frac{36k}{g(x)^2} \sum_{v(s)\leq p\leq y^2} \frac{(\log p)^2}{p^{2s}} \right)^k +x^{3/4}.
\end{align*}
Now by the prime number theorem and partial summation, we have 
$$  \sum_{v(s)\leq p\leq y^2} \frac{(\log p)^2}{p^{2s}} \ll \frac{\log v(s)}{v(s)^{2s-1}(s-1/2)} + \frac{1}{(s-1/2)^2 v(s)^{2s-1}}
\ll  g(x)^2Me^{-2M}.
$$
Thus, making the same choice $k= \lfloor M^2/60\rfloor$ gives 
\begin{equation}\label{LargePrimes}
|\mathcal{E}_3(x)| \ll x \exp\left(-\frac{M^2}{60}\right).
\end{equation}
Finally using that $e^{-p/y}= 1+O(p/y)$ we get 
$$ \sum_{u(s)\leq  p\leq v(s)} \frac{\chi_D(p)\log p}{p^s}e^{-p/y}- \sum_{u(s)\leq p\leq  v(s)} \frac{\chi_D(p)\log p}{p^s} \ll \sum_{p\leq v(s)} \frac{\sqrt{p}\log p}{y} \ll \frac{v(s)^{3/2}}{y} \ll 1.$$
Combining this estimate with \eqref{Noise}, \eqref{SmallPrimes}, and \eqref{LargePrimes} completes the proof.
\end{proof}
\subsection{The typical size of $\log L(s,\chi_D)$ near the central point}
The following important concentration result for $L(s,\chi_D)$ implies in particular that $L(s,\chi_D)$ is typically  not too small in the vicinity of $s=1/2$. This is used in the proofs of Theorems \ref{MainFek} and \ref{ThmPartialSumsPositive} to show that the factor $L(s,\chi_D)$ does not affect the size of $-\frac{L'}{L}(s, \chi_D)$ for almost all discriminants $D\in \F.$
\begin{pro}\label{CLTLogL}
Let $0<\alpha<1$ be fixed. Let $x$ be large and $1/2+ (\log\log x)^2/\log x\le s\leq 1/2+1/(\log x)^{\alpha}$ be a real number. There exists positive constants $C_1$ and $C_2$ such that uniformly for $V$ in the range $1\leq V \leq C_1 \sqrt{ (s-1/2)\log x/\log\log x}$, the number of fundamental discriminants $D\in \mathcal{F}(x)$ such that 
$$\left|\log L(s,\chi_D)- \frac{1}{2}\log\left(\frac{1}{s-1/2}\right)\right|> V \sqrt{\log \left(\frac{1}{s-1/2}\right)}
$$ 
is 
$$ 
\ll x \exp\left(-C_2V^2\right).
$$
\end{pro}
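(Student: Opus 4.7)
The plan is to approximate $\log L(s,\chi_D)$ by a short Dirichlet polynomial over prime powers, extract the deterministic main term $\tfrac{1}{2}\log(1/(s-1/2))$ coming from the squares of primes, and then bound the remaining random-looking prime sum by computing high moments via Lemma \ref{LargeSieve} and applying Chebyshev's inequality.

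First, I would apply Lemma \ref{ZeroDensity} with $y := (\log x)^{12/(s-1/2)}$: outside an exceptional set of size $O(x^{1-(s-1/2)/5})$ (which is easily absorbed by $x\exp(-C_2 V^2)$ throughout the stated range of $V$), one has
\[
\log L(s,\chi_D) = \sum_{n=1}^{\infty} \frac{\Lambda(n)\chi_D(n)}{n^s \log n}\,e^{-n/y} + O\!\left(\frac{1}{\log|D|}\right).
\]
The tail $n > y^2$ contributes $O(1/\log x)$ by a direct analogue of \eqref{TailBoundLPrimeL} (the extra $1/\log n$ factor in $\log L$ only helps). I would then split the truncated sum according to whether $n$ is prime, a prime square, or a higher prime power. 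The terms $p^k$ with $k\geq 3$ are trivially $O(1)$. The $p^2$ terms, using $\chi_D(p)^2 = \mathbf{1}_{p\nmid D}$ together with Mertens' estimate $\sum_p 1/p^{2s} = \log(1/(s-1/2)) + O(1)$, yield
\[
\tfrac{1}{2}\sum_{p\nmid D} \frac{e^{-p^2/y}}{p^{2s}} = \tfrac{1}{2}\log\!\left(\tfrac{1}{s-1/2}\right) + O(\log_3 x),
\]
where the error $\log_3 x$ absorbs both $\sum_{p\mid D} 1/p \ll \log_3 |D|$ (by the primorial inequality, the primes dividing $D$ are bounded by $O(\log |D|)$) and the negligible truncation loss $y^{-(s-1/2)} = (\log x)^{-12}$. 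Writing $L := \log(1/(s-1/2))$, note that $L \asymp \log_2 x$ throughout our range, so the $O(\log_3 x)$ error is comfortably $o(\sqrt{L})$. Thus the deviation $\log L(s,\chi_D) - \tfrac{1}{2}L$ is controlled, up to this acceptable error, by the random-looking prime sum
\[
P(D) := \sum_{p\le y^2} \frac{\chi_D(p)}{p^s}e^{-p/y}.
\]

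To bound the large deviations of $P(D)$, I would apply Lemma \ref{LargeSieve} with $a(p) = e^{-p/y}/p^s$ and $z = y^2$: the constraint $k \le \log x/(5\log z)$ becomes $k \le c(s-1/2)\log x/\log_2 x$, and for $k$ in this range
\[
\sum_{D \in \F} |P(D)|^{2k} \ll x\bigl(C_3 k L\bigr)^k + x^{5/8}\Bigl(\sum_{p\le y^2} p^{-s}\Bigr)^{2k} \ll x\bigl(C_4 k L\bigr)^k,
\]
the secondary $x^{5/8}$ term being absorbed by our upper bound on $k$. Chebyshev's inequality then gives
\[
\#\bigl\{D \in \F : |P(D)| > V\sqrt{L}\,\bigr\} \le \frac{\sum_D |P(D)|^{2k}}{(V\sqrt L)^{2k}} \ll x\left(\frac{C_4 k}{V^2}\right)^k,
\]
and the optimizing choice $k = \lfloor V^2/(eC_4)\rfloor$ yields the claimed bound $x\exp(-C_2 V^2)$. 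This choice of $k$ is admissible precisely when $V^2 \ll (s-1/2)\log x/\log_2 x$, matching the hypothesis on $V$.

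The main obstacle I anticipate is maintaining uniformity of all implicit constants across the full window $1/2 + (\log_2 x)^2/\log x \le s \le 1/2 + 1/(\log x)^\alpha$: at the upper endpoint $y = (\log x)^{12(\log x)^\alpha}$ is enormous, and one must check that Lemma \ref{ZeroDensity} still applies there (it does, since $y = x^{o(1)}$ thanks to $\alpha<1$) and that the large-sieve constraint still leaves room for the optimizing $k \sim V^2$. A secondary subtlety is to ensure that the $O(\log_3 x)$ error from isolating the $p^2$ main term stays $o(V\sqrt L)$ throughout; since $L \ge \alpha \log_2 x$ and $\log_3 x = o(\sqrt{\log_2 x})$, this holds for every $V \ge 1$.
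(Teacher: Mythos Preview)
Your proposal is correct and follows essentially the same approach as the paper's proof: apply Lemma~\ref{ZeroDensity} with $y=(\log x)^{12/(s-1/2)}$, split off the prime-square contribution to extract the main term $\tfrac12\log(1/(s-1/2))$ with error $O(\log_3 x)$, bound the remaining prime sum by high moments via Lemma~\ref{LargeSieve}, and apply Chebyshev with the optimizing choice $k\asymp V^2$. The paper handles the additive $O(\log_3 x)$ error via the inequality $(a+b)^{2k}\le 2^{2k}(|a|^{2k}+|b|^{2k})$ inside the moment, whereas you absorb it beforehand as $o(V\sqrt{L})$; both are fine.
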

\begin{proof}
Let $A= 12/(s-1/2)$ and $y= (\log x)^A$. Let $\mathcal{E}(x)$ be the exceptional set in Lemma \ref{ZeroDensity}. Then, for all fundamental discriminants $|D|\leq x$ with $D\notin \mathcal{E}(x)$ we have 
\begin{equation}\label{FirstApproxLogL}
\log L(s,\chi_D)= \sum_{p} \frac{\chi_D(p)}{p^s}e^{-p/y} + \frac{1}{2}\sum_{p\nmid D} \frac{e^{-p^2/y}}{p^{2s}} +O(1).
\end{equation}
We first estimate the contribution of the terms $p^2$ to $\log L(s, \chi_D)$. Let $p_j$ denote the $j$-th prime, and $\omega(D)\leq \log |D|$ denote the number of prime factors of $D$. Note that 
$$  \sum_{p\mid D} \frac{e^{-p^2/y}}{p^{2s}} \leq \sum_{p\mid D} \frac{1}{p}\leq \sum_{p\leq p_{\omega(D)}}\frac{1}{p}\ll \log\log p_\omega(D)\ll \log\log\left(\frac{1}{s-\frac12}\right), $$
by our assumption on $s$ and since $p_j\ll j\log j$. Furthermore, we have 
$$ 
\sum_{p} \frac{1-e^{-p^2/y}}{p^{2s}}\ll \frac{1}{y}\sum_{p\leq \sqrt{y}} \frac{p^2}{p^{2s}}+ \sum_{p>\sqrt{y}} \frac{1}{p^{2s}} \ll \frac{1}{y}\sum_{p\leq \sqrt{y}} p+ \frac{1}{s-1/2} y^{1/2-s}\ll 1,
$$
by the prime number theorem and our assumption on $y$ and $s$. Combining these estimates we deduce that the contribution of the squares of primes to $\log L(s, \chi_D)$ is 
\begin{equation}\label{ContributionSquaresLogL}
\begin{aligned}
\frac{1}{2}\sum_{p\nmid D} \frac{e^{-p^2/y}}{p^{2s}}
&= \frac{1}{2}\sum_{p} \frac{1}{p^{2s}}+ O\left(\log\log\left(\frac{1}{s-\frac12}\right)\right)\\
&= \frac{1}{2}\log\left(\frac{1}{s-1/2}\right)+O\left(\log\log\left(\frac{1}{s-\frac12}\right)\right),
\end{aligned}
\end{equation}
by the prime number theorem. On the other hand by \eqref{TailBoundLPrimeL} the  contribution of the primes $p>y^2$ to the first sum on the right hand side of \eqref{FirstApproxLogL} is
$$ \ll \sum_{p>y^2} \frac{1}{p^s}e^{-p/y}\ll  \frac{1}{\log x}.$$
Let $k\leq \log x/(10 \log y)$ be a positive integer to be chosen. Combining this last estimate with \eqref{FirstApproxLogL} and \eqref{ContributionSquaresLogL} and using the basic inequality $(a+b)^{2k}\leq 2^{2k} (|a|^{2k}+|b|^{2k})$, which is valid for all real numbers $a$ and $b$, we deduce that 
\begin{equation}\label{MomentsLogLError}
\begin{aligned}
& \sum_{d\in \mathcal{F}(x)\setminus \mathcal{E}(x)}\left|\log L(s,\chi_D)- \frac{1}{2}\log\left(\frac{1}{s-1/2}\right)\right|^{2k} \\
& \leq e^{O(k)} \left(\sum_{d\in \mathcal{F}(x)} \left|\sum_{p\leq y^2} \frac{\chi_D(p)}{p^s}e^{-p/y}\right|^{2k}+ x\log\log\left(\frac{1}{s-\frac12}\right)^{2k}\right).
\end{aligned}
\end{equation}
Now, it follows from Lemma \ref{LargeSieve} that 
\begin{equation}\label{MomentsLogLError2}
\sum_{d\in \mathcal{F}(x)} \left|\sum_{p\leq y^2} \frac{\chi_D(p)}{p^s}e^{-p/y}\right|^{2k} \ll x\left(k\sum_{p\leq y^2} \frac{e^{-2p/y}}{p^{2s}}\right)^k + x^{5/8} \left(\sum_{p\leq y^2} \frac{e^{-p/y}}{p^s}\right)^{2k}.
\end{equation}
The second term is bounded by 
$$  x^{5/8} \left(\sum_{p\leq y^2} \frac{1}{\sqrt{p}}\right)^{2k} \ll x^{5/8} y^{2k} \ll x^{9/10},$$ 
by our assumption on $k$. Furthermore, the first term of \eqref{MomentsLogLError2} is 
$$ \ll x \left(k \sum_{p} \frac{1}{p^{2s}}\right)^k\ll x\left(2k \log \left(\frac{1}{s-1/2}\right)\right)^{k}.$$
Combining these estimates with \eqref{MomentsLogLError} and \eqref{MomentsLogLError2} we deduce that 
$$\sum_{d\in \mathcal{F}(x)\setminus \mathcal{E}(x)}\left|\log L(s,\chi_D)- \frac{1}{2}\log\left(\frac{1}{s-1/2}\right)\right|^{2k} \ll x\left(Ck \log \left(\frac{1}{s-1/2}\right)\right)^{k}, 
$$
for some positive constant $C>0$. Therefore, the number of fundamental discriminants $D\in \mathcal{F}(x)$ such that 
$$\left|\log L(s,\chi_D)- \frac{1}{2}\log\left(\frac{1}{s-1/2}\right)\right|> V \sqrt{\log \left(\frac{1}{s-1/2}\right)}
$$ 
is 
\begin{align*}
&\ll |\mathcal{E}(x)|+ \left(V^2\log \left(\frac{1}{s-1/2}\right)\right)^{-k}\sum_{d\in \mathcal{F}(x)\setminus \mathcal{E}(x)}\left|\log L(s,\chi_D)- \frac{1}{2}\log\left(\frac{1}{s-1/2}\right)\right|^{2k}\\
&\ll x^{1-(s-1/2)/5} + x \left(\frac{Ck}{V^2}\right)^k.
\end{align*}
Choosing $k= \lfloor V^2/Ce\rfloor$ completes the proof. 
\end{proof}

\section{Oscillations of $L'/L(s, \chi_D)$: Proof of Theorem \ref{Main}}

\subsection{The joint distribution of Dirichlet polynomials}

Let $\{\X(p)\}_{p}$ be a sequence of independent random variables, indexed by the primes, and taking the values $ -1, 0, 1$ with probabilities 
\begin{equation}\label{random_def}
\pr(\X(p)=1)=\pr(\X(p)=-1)= \frac{p}{2(p+1)}, \text{ and } \pr(\X(p)=0)= \frac{1}{p+1}.
\end{equation}
We extend the $\X(p)$ multiplicatively, by defining for $n=\prod_p p^\alpha$, $\X(n) =\prod_p \X(p)^\alpha$. For real numbers $2\leq y< z$ and a complex number $s$ we define
$$ L_{y, z}(s, \chi_D):= \sum_{y<p< z}\frac{\chi_D(p)\log p}{p^{s}},
\quad \text{and} \quad 
 L_{y, z}(s, \X):= \sum_{y<p< z}\frac{\X(p)\log p}{p^{s}}.$$ 

Let $J$ be a positive integer and $s_1, \dots, s_J,u_1, \dots, u_J, v_1, \dots, v_J$ be real numbers such that $1/2<s_j\leq 1$ and $2\leq u_j<v_j\leq x$ for all $1\leq j\leq J$. We also put $\U= (u_1, \dots, u_J)$, $\V=(v_1, \dots, v_J)$ and $\s=(s_1, \dots, s_J)$. 

In this section we shall compare the distribution of the following vector of Dirichlet polynomials 
$$ L_{\U, \V}(\s, \chi_D):=\Big(L_{u_1, v_1}(s_1, \chi_D), L_{u_2, v_2}(s_2, \chi_D), \dots, L_{u_J, v_J}(s_J, \chi_D)\Big), $$ to that of the corresponding probabilistic random vector
$$ L_{\U, \V}(\s, \X):= \Big(L_{u_1, v_1}(s_1, \X), L_{u_2, v_2}(s_2, \X), \dots, L_{u_J, v_J}(s_J, \X)\Big).$$ 
Using the methods of Lamzouri, Lester and Radziwill \cite{LLR2} and \cite{LLR1}  we shall bound the ``discrepancy" between the distribution functions of these vectors, which is defined by
$$ D_{\U, \V}(\s)= \sup_{\mathcal{R}}\left|\frac{1}{|\F|}\left|\big\{ D\in \F : L_{\U, \V}(\s, \chi_D) \in \mathcal{R}\big\}\right|- \pr\big(L_{\U, \V}(\s, \X) \in \mathcal{R}\big) \right| $$ 
where the supremum is taken over all rectangular boxes  (possibly unbounded) $\mathcal{R}\subset \mathbb{R}^J$  with sides parallel to coordinates axes.  

\begin{thm}\label{Discrepancy} Let $x$ be large, and $J\leq (\log \log x)^2$ be a positive integer. Let  $s_1, \dots, s_J$ be real numbers such that $(\log x)^{-1/5}\leq s_j -1/2  \leq (\log\log x)^{-2}$ for all $1\leq j\leq J$. Let $\U, \V\in \mathbb{R}^J$ be such that $\log x\leq u_j<v_j<\exp((\log x)^{1/5})$, and 
$$ (\log u_j) (s_j-1/2) \to 0 \quad \text{ and } \quad (\log v_j) (s_j-1/2) \to \infty, $$ 
as $x\to \infty$, for all $1\leq j\leq J$. 
Then we have

$$D_{\U, \V}(\s)\ll \frac{J}{(\log x)^{1/10}}.$$

\end{thm}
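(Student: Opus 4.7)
The plan is to adapt the Beurling-Selberg majorant/minorant technique developed by Lamzouri, Lester and Radziwill in \cite{LLR1, LLR2}, combined with a moment-comparison argument powered by Lemma \ref{LargeSieve}.

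First I would reduce to bounded boxes. Lemma \ref{LargeSieve} yields $\ex_{\chi_D}|L_{u_j, v_j}(s_j, \chi_D)|^{2k}\ll (C k(\log v_j)^2)^k$, with an analogous bound for the random model following from independence of the $\X(p)$'s. Hence both distributions are sub-Gaussian at scale $O(\log v_j) \leq O((\log x)^{1/5})$, and we may truncate each coordinate of the box $\mathcal{R}$ to $[-(\log x)^{1/4}, (\log x)^{1/4}]$ at the cost of $O((\log x)^{-100})$ in the discrepancy. For each bounded coordinate interval $I_j$, let $M_j^\pm$ be Beurling-Selberg majorants/minorants of exponential type $2\pi T$ with $M_j^- \leq \mathbf{1}_{I_j} \leq M_j^+$, $\int(M_j^+ - M_j^-)\, dx \ll 1/T$, and $\widehat{M_j^\pm}$ supported in $[-T, T]$. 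Forming tensor products $M^\pm(\mathbf{x}) = \prod_j M_j^\pm(x_j)$ and handling the smoothing error coordinate-by-coordinate (using the sub-Gaussian marginals as in \cite[Sec.~4]{LLR1}), one checks that the Beurling-Selberg approximation contributes $O(J/T)$ to the discrepancy, uniformly in $\mathcal{R}$.

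By Fourier inversion, the remaining oscillatory contribution is controlled by
\[
\Delta(\xi) := \frac{1}{|\F|}\sum_{D\in \F} e^{2\pi i \langle \xi, L_{\U,\V}(\s,\chi_D)\rangle} - \ex\bigl[e^{2\pi i \langle \xi, L_{\U,\V}(\s, \X)\rangle}\bigr], \qquad \xi \in [-T,T]^J.
\]
Expanding each exponential in a Taylor series up to some order $K$ (controlling tails via Lemma \ref{LargeSieve}) reduces the problem to comparing the joint mixed moments $\ex_{\chi_D}\prod_{j=1}^J L_{u_j, v_j}(s_j, \chi_D)^{k_j}$ with $\ex\prod_{j=1}^J L_{u_j, v_j}(s_j, \X)^{k_j}$ for multi-indices $\mathbf{k}$ with $|\mathbf{k}| = \sum_j k_j \leq K$. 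Expanding each product into sums over tuples of primes, Lemma \ref{Orthogonality} shows that averaging over $D$ selects exactly the perfect-square contribution, which matches the random-model moment structure (since $\ex\X(p)^{2m} = p/(p+1)$ while $\ex\X(p)^{2m+1} = 0$ by \eqref{random_def}), with an off-diagonal error of $\ll x^{-3/8}$ per tuple. The number of such tuples is bounded by $(\max_j v_j)^{|\mathbf{k}|} \leq \exp(K(\log x)^{1/5})$, so the total off-diagonal contribution is $\ll x^{-1/4}$ provided $K \leq (\log x)^{4/5}/5$.

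Finally, I would choose $T = (\log x)^{1/10}$ and $K$ of order $(\log x)^{4/5}/\log\log x$. With these choices, Lemma \ref{LargeSieve} combined with Stirling's formula shows that the Taylor-tail contribution to $\Delta(\xi)$ is dominated by $(e T \log v_{\max}/\sqrt{K})^K \ll (\log x)^{-100}$, so $|\Delta(\xi)| \ll x^{-1/4}$ uniformly on $[-T,T]^J$. Since $\int_{[-T,T]^J}|\widehat{M^\pm}|\, d\xi \ll x^{o(1)}$ by Plancherel, the total Fourier-side contribution is $\ll x^{-1/4 + o(1)}$, negligible compared to the smoothing error $O(J/T) = O(J/(\log x)^{1/10})$, which dominates the discrepancy as claimed. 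The main obstacle is the moment-matching step: one must track the combinatorics of joint prime tuples across $J$ distinct support ranges and verify term-by-term that the perfect-square contributions from $\chi_D$ agree with those of the independent random vector. The upper bound $v_j \leq \exp((\log x)^{1/5})$ is precisely what permits $K$ of order $(\log x)^{4/5}$, which in turn makes $\Delta(\xi)$ small enough relative to the Beurling-Selberg smoothing error at the chosen level $T = (\log x)^{1/10}$, while the hypothesis $J \leq (\log\log x)^2$ ensures the dimension-dependent factors $x^{o(1)}$ remain subdominant.
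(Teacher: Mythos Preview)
Your approach is essentially the paper's: restrict to bounded boxes via moment bounds, apply Beurling--Selberg minorants at level $\Delta=(\log x)^{1/10}$, compare characteristic functions by Taylor expansion plus joint-moment matching through Lemma~\ref{Orthogonality} (the paper packages this as Lemma~\ref{MomentsDirPoly} and Proposition~\ref{characteristic}, truncating the Taylor series at order $N=(\log x)^{2/3}$ rather than your $(\log x)^{4/5}/\log\log x$), and then peel off the smoothing one coordinate at a time for a total error $O(J/\Delta)$. One small correction: your bound $\int_{[-T,T]^J}|\widehat{M^\pm}|\,d\xi\ll x^{o(1)}$ does not follow from Plancherel (which controls $L^2$, not $L^1$); instead use the pointwise bound $|\widehat{M_j^\pm}(\xi)|\le |I_j|+O(1/T)\ll\sqrt{\log x}$ together with $J\le(\log\log x)^2$ to get $(2T\sqrt{\log x})^J=x^{o(1)}$, and note that the coordinate-wise smoothing error $O(1/T)$ requires anticoncentration of each marginal (Gaussian-type decay of its characteristic function, Lemma~\ref{VarianceCharacteristic}), which is precisely where the hypotheses $(\log u_j)(s_j-1/2)\to 0$ and $(\log v_j)(s_j-1/2)\to\infty$ enter.
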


\begin{rem} We did not try to optimize the power of $\log x$ in the above upper bound for the discrepancy $D_{\U, \V}(\s)$. \end{rem}

The first ingredient in the proof of Theorem \ref{Discrepancy} is the following lemma which shows that the moments of $L_{\U, \V}(\s, \chi_D)$ are very close to those of $L_{\U, \V}(\s, \X)$.  


\begin{lem}\label{MomentsDirPoly}
Let $C>0$ be a fixed constant. Let $J$ be a positive integer, and $b_j(n)$ be real numbers such that $|b_j(n)|\leq C $ for all $1\leq j\leq J$ and $ n \geq 1$. 
Let $k_j$ be positive integers for $ j \leq J$  and write $k = \sum_{j\leq J} k_j$.
Then uniformly for $Y, x\geq 2$ we have 
$$
 \frac{1}{|\F|}\sum_{D\in \F}\prod_{ j \leq J}\left(\sum_{n \leq Y} b_j (n) \chi_D(n)\right)^{k_j}  = \ex\bigg[\prod_{ j \leq J}\left(\sum_{n \leq Y} b_j (n)\X(n)\right)^{k_j}\bigg]  + O \bigg(  (CY^{3/2})^{k}x^{-1/2}   \bigg),$$
where the implicit constant in the error term is absolute.
\end{lem}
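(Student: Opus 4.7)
The plan is to expand both products as sums over tuples, apply the orthogonality relation of Lemma \ref{Orthogonality} on the arithmetic side, and observe that the arithmetic main terms coincide exactly with the corresponding probabilistic expectations, leaving a manageable error term.

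Writing $k = \sum_{j} k_j$ and expanding, one has
$$\prod_{j \leq J} \bigg( \sum_{n \leq Y} b_j(n) \chi_D(n) \bigg)^{k_j} = \sum_{(n_{j,i})} \bigg(\prod_{j,i} b_j(n_{j,i})\bigg) \chi_D(N),$$
where the outer sum runs over tuples $(n_{j,i})_{1 \leq j \leq J,\, 1 \leq i \leq k_j}$ with each $n_{j,i} \leq Y$ and $N := \prod_{j,i} n_{j,i} \leq Y^k$. Averaging over $D \in \F$ via Lemma \ref{Orthogonality} and \eqref{AsympF}, one obtains for each tuple
$$\frac{1}{|\F|}\sum_{D \in \F} \chi_D(N) = \mathbf{1}_{N = \square} \prod_{p \mid \sqrt{N}} \frac{p}{p+1} + O\!\left(x^{-1/2} N^{1/4} \log(N+2)\right),$$
where the $O(x^{-1/2}\tau(\sqrt{N}))$ contribution on squares is absorbed into the displayed error since $\tau(m) \ll m^{o(1)} \ll N^{1/4}$. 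On the probabilistic side, multiplicativity of $\X$ together with $\X(p) \in \{-1,0,1\}$ yields $\ex[\X(N)] = \prod_p \ex[\X(p)^{\alpha_p}]$; from \eqref{random_def} one has $\ex[\X(p)^{\alpha}] = 0$ for odd $\alpha$ and $\ex[\X(p)^{\alpha}] = p/(p+1)$ for even $\alpha \geq 2$, so $\ex[\X(N)]$ matches the arithmetic main term above term by term.

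It then remains to bound the total contribution of the error term. Using $|b_j(n)| \leq C$ and $N \leq Y^k$, the total error is
$$\ll C^k x^{-1/2} \sum_{n_{1,1}, \dots, n_{J, k_J} \leq Y} N^{1/4} \log(N+2) \ll C^k Y^{5k/4} \cdot k \log Y \cdot x^{-1/2} \ll (CY^{3/2})^k x^{-1/2},$$
the final step absorbing $k \log Y$ into the slack $Y^{k/4}$ (valid for $Y \geq 2$). The only point requiring genuine care, as opposed to mere bookkeeping, is the verification that the probabilistic main term reproduces exactly the arithmetic factor $\prod_{p \mid m} p/(p+1)$ appearing in Lemma \ref{Orthogonality}; this matching is precisely what motivates the random model \eqref{random_def}, and once verified the rest of the argument reduces to crude absolute-value estimation.
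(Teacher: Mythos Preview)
Your proof is correct and follows essentially the same approach as the paper: expand the product over tuples, apply Lemma \ref{Orthogonality} term by term, identify the main term with $\ex[\X(N)]$, and bound the remaining error crudely. The only cosmetic difference is that the paper separates the diagonal ($N=\square$) and off-diagonal contributions before estimating, while you handle them in a single unified error bound; either way the arithmetic is identical and the final error $(CY^{3/2})^k x^{-1/2}$ comes out the same.
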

\begin{proof}
We have
\begin{align*}
  & \frac{1}{|\F|} \sum_{D\in \F}\prod_{ j \leq J}\left(\sum_{n \leq Y} b_j (n) \chi_D(n)\right)^{k_j}   \\
& = \frac{1}{|\F|}\sum_{D\in \F} \bigg( \sum_{n_{i,j} \leq Y}    \prod_{j=1}^J \prod_{i=1}^{k_j} b_j (n_{i,j})\chi_D(n_{i, j})\bigg) \\
& =\sum_{n_{i,j} \leq Y}   \prod_{j=1}^J  \prod_{i=1}^{k_j} b_j (n_{i,j})  \frac{1}{|\F|}\sum_{D\in \F} \chi_D\bigg(\prod_{j=1}^J \prod_{i=1}^{k_j} n_{i,j}\bigg).
\end{align*}


The diagonal terms correspond to those terms such that the product $\prod_{j=1}^J \prod_{i=1}^{k_j} n_{i,j} $ is a perfect square. By Lemma \ref{Orthogonality}, along with the trivial bound $\tau(m)\ll \sqrt{m}$ we find that the contribution of these terms is
\begin{align*}
\Sigma_1& =\sum_{\substack{ n_{i,j}\leq Y \\   \prod n_{i,j}= \square   } }  \prod_{j=1}^J \prod_{i=1}^{k_j} b_j (n_{i,j})  \prod_{p| \prod n_{i,j} }\left(\frac{p}{p+1}\right) + O\left( x^{-1/2} (CY^{3/2})^{k}\right)\\
 & = \ex \bigg(\prod_{ j \leq J}\left(\sum_{n \leq Y} b_j (n)\X(n)\right)^{k_j}   \bigg) +O\left( x^{-1/2} (CY^{3/2})^{k}\right).
\end{align*}
Furthermore, by Lemma \ref{Orthogonality}, the contribution of the off-diagonal terms is 
\begin{align*}
\Sigma_2   = &  \sum_{\substack{ n_{i,j} \leq Y \\   \prod n_{i,j}\neq \square   } }   \prod_{j=1}^J  \prod_{i=1}^{k_j} b_j (n_{i,j})  \frac{1}{|\F|}\sum_{D\in \F} \chi_D\bigg(\prod_{j=1}^J \prod_{i=1}^{k_j} n_{i,j} \bigg)\\
& \ll x^{-1/2} k(CY^{1/4})^{k}\log Y \sum_{\substack{ n_{i,j} \leq Y \\   \prod n_{i,j}\neq \square   } } 1\\
& \ll x^{-1/2} (CY^{3/2})^{k}.
\end{align*}
This completes the proof.
\end{proof}

Let  $\T = (t_1 , \dots, t_J)\in \mathbb R^{J}$, and define 
$$
\Phi_x( \T) = \frac{1}{|\F|}\sum_{D\in \F} 
\exp \left( 2\pi i  \sum_{ j \leq  J } t_jL_{u_j, v_j}(s_j, \chi_D)\right) 
$$
and
\begin{equation}
\begin{aligned}
\Phi_x^{\mathrm{rand}}(\T) = \ex \left(\exp \left( 2\pi i  \sum_{ j \leq  J }t_jL_{u_j, v_j}(s_j, \X)\right) \right).
\end{aligned}\end{equation}

\begin{pro}\label{characteristic}
Let $x$ be large, and $J\leq (\log \log x)^2$ be a positive integer. Let $\U, \V \in \mathbb{R}^J$ be such that $2\leq u_j< v_j<\exp((\log x)^{1/5})$, and  $s_1, \dots, s_J$ be real numbers such that $1/2 \leq s_j \leq 1$ for all $1\leq j\leq J$.
Then, for all $\T$ with $ ||\T||_{\infty} \leq (\log x)^{1/10}$, we have
$$
\Phi_x( \T)  = \Phi_x^{\mathrm{rand}}(\T) + O \left(\exp\left(-(\log x)^{2/3}\right)\right).
$$
Here, $ || \T ||_{\infty} := \sup_{j \leq J} |t_j | $. \end{pro}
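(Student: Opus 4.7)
The plan is to expand both $e^{2\pi i T(D)}$ and $e^{2\pi i T(\X)}$ (with the shorthand $T(D):=\sum_{j\le J}t_j L_{u_j,v_j}(s_j,\chi_D)$ and analogously for $T(\X)$) as Taylor series, match the low-order moments via Lemma \ref{MomentsDirPoly}, and control the remainder via the moment bound of Lemma \ref{LargeSieve} on the character side and Khintchine's inequality on the random side.

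Concretely, I would fix $K\asymp(\log x)^{4/5}/10$ and split $e^{2\pi i T(D)}=\sum_{k=0}^K(2\pi i T(D))^k/k!+R_K(D)$ with Lagrange remainder $|R_K(D)|\le(2\pi|T(D)|)^{K+1}/(K+1)!$, and similarly on the random side. Expanding $T(D)^k$ by the multinomial theorem produces monomials $\prod_j P_j^{k_j}$ (with $P_j:=L_{u_j,v_j}(s_j,\chi_D)$), each exactly in the form treated by Lemma \ref{MomentsDirPoly} with truncation $Y=e^{(\log x)^{1/5}}$ and coefficient bound $C=(\log x)^{1/5}$. Using the identity $\sum_{k_1+\cdots+k_J=k}\binom{k}{k_1,\ldots,k_J}\prod_j|t_j|^{k_j}=(\sum_j|t_j|)^k\le(J\|\T\|_\infty)^k$ to collect combinatorial weights, the total main-term discrepancy is bounded by
\[
x^{-1/2}\sum_{k=0}^K\frac{(2\pi J\|\T\|_\infty CY^{3/2})^k}{k!},
\]
which by Stirling and the fact that $K$ lies well below the summand's maximizer is dominated by its last term and is $\ll x^{-1/3}$. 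For the character-sum tail, I would combine the trivial inequality $|T(D)|^{K+1}\le(J\|\T\|_\infty)^{K+1}\max_j|P_j|^{K+1}$ with Lemma \ref{LargeSieve} (applying Cauchy--Schwarz if $K+1$ is odd) to bound
\[
\frac{1}{|\F|}\sum_{D\in\F}\frac{|R_K(D)|}{(K+1)!}\ll\left(\frac{C'(\log\log x)^{O(1)}(\log x)^{3/5}}{K}\right)^{(K+1)/2},
\]
which is exponentially small in $(\log x)^{4/5}\log\log x$, in particular much smaller than $\exp(-(\log x)^{2/3})$. The random tail $\ex|T(\X)|^{K+1}/(K+1)!$ satisfies the same bound via the independence of $\{\X(p)\}_p$ and Khintchine's inequality (which produces the same even-moment estimate as Lemma \ref{LargeSieve}).

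The main obstacle is the tight balance between $K$ and the per-monomial Lemma \ref{MomentsDirPoly} error: each monomial contributes a factor $(CY^{3/2})^k=\exp((1.5+o(1))k(\log x)^{1/5})$ which must be offset by the $x^{-1/2}$ from the lemma, so one needs $K(\log x)^{1/5}$ comfortably smaller than $\log x$. The hypothesis $v_j<\exp((\log x)^{1/5})$ is exactly what makes the choice $K\asymp(\log x)^{4/5}/10$ viable: large enough to render the tail super-polynomially small, and small enough to keep the main-term error in the polynomial-decay regime. The combinatorial cost of expanding $T(D)^k$ into up to $J^k$ monomials is harmless because it is absorbed into the factor $(J\|\T\|_\infty)^k$ above.
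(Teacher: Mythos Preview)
Your proposal is correct and follows essentially the same approach as the paper: Taylor-expand the exponential, match moments term-by-term using Lemma \ref{MomentsDirPoly}, and control the tail via the large-sieve moment bound \eqref{LargeSieve2} (and its random analogue). The only differences are cosmetic: the paper truncates at $2N$ with $N=[(\log x)^{2/3}]$ rather than your $K\asymp(\log x)^{4/5}/10$, bounds the tail of $\big(\sum_j|P_j|\big)^{2N}$ by Minkowski's inequality instead of your $\max_j$ trick, and appeals directly to the diagonal-term estimate \eqref{LargeSieve3} on the random side rather than calling it Khintchine's inequality. Also note that the coefficients $b_j(p)=(\log p)/p^{s_j}$ are bounded by an absolute constant (since $(\log t)/\sqrt{t}\le 2/e$), so your choice $C=(\log x)^{1/5}$ is a harmless overestimate.
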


\begin{proof}
Let $N=[(\log x)^{2/3}]$.  Using the Taylor expansion of $\exp(2\pi i z)$ for $z\in \mathbb{R}$ we derive
\begin{equation}\label{TaylorExp}
\Phi_x(\T)=\sum_{n=0}^{2N-1} \frac{(2 \pi i)^n}{n!} \cdot \frac{1}{|\F|}\sum_{D\in \F}\left(\sum_{ j \leq  J } t_jL_{u_j, v_j}(s_j, \chi_D)\right)^n+ E_1,
\end{equation}
where 
$$E_1 
\ll  \frac{ ( 2 \pi)^{2N} ||\T||_{\infty}^{2N}}{(2N)!}\frac{1}{|\F|}\sum_{D\in \F}\left(\sum_{j=1}^J|L_{u_j, v_j}(s_j, \chi_D)|\right)^{2N}. 
$$
By Minkowski's inequality and \eqref{LargeSieve2} we obtain
\begin{align*}
\frac{1}{|\F|}\sum_{D\in \F}\left(\sum_{j=1}^J|L_{u_j, v_j}(s_j, \chi_D)|\right)^{2N}
& \leq \left(\sum_{j=1}^J\left(\frac{1}{|\F|}\sum_{D\in \F}|L_{u_j, v_j}(s_j, \chi_D)|^{2N}\right)^{1/(2N)}\right)^{2N}\\
& \ll \big(C_1NJ^2 (\log x)^{2/5}\big)^N.
\end{align*}
Therefore, by our assumption on $||\T||_{\infty}$ and Stirling's formula we deduce that 
\begin{equation}\label{TaylorChar}
\Phi_x(\T)=\sum_{n=0}^{2N-1} \frac{(2 \pi i)^n}{n!} \cdot \frac{1}{|\F|}\sum_{D\in \F}\left(\sum_{ j \leq  J } t_jL_{u_j, v_j}(s_j, \chi_D)\right)^n+ O\left(e^{-N}\right).
\end{equation}

Next, we handle the main term of \eqref{TaylorChar}. To this end, we use Lemma \ref{MomentsDirPoly}, which implies that for all non-negative integers $k_1, k_2, \dots, k_{J}$ such that $k_1+\cdots  + k_{J} \leq 2N$ we have
$$
\frac{1}{|\F|}\sum_{D\in \F}\prod_{ j \leq J}L_{u_j, v_j}(s_j, \chi_D)^{k_j} 
 = \ex\left(\prod_{j=1}^JL_{u_j, v_j}(s_j, \X)^{k_j} \right) + E_2,
$$
where 
$$ E_2\ll x^{-1/2} (C_2 \exp(2(\log x)^{1/5}))^{2N} \ll x^{-1/4}.$$
We use this asymptotic formula for all $0\leq n\leq 2N-1$ to get
\begin{align*}
&\frac{1}{|\F|}\sum_{D\in \F}\left(\sum_{ j \leq  J } t_jL_{u_j, v_j}(s_j, \chi_D)\right)^n \\
&= \sum_{\substack{k_1,\dots, k_{J}\geq 0\\ k_1+\cdots+k_{J}=n}}{n\choose k_1,  \dots, k_{J}}\prod_{j=1}^J t_j^{k_j} \frac{1}{|\F|}\sum_{D\in \F}\prod_{ j \leq J}L_{u_j, v_j}(s_j, \chi_D)^{k_j} \\
&=  \sum_{\substack{k_1,\dots, k_{J}\geq 0\\ k_1+\cdots+k_{J}=n}}{n\choose k_1,  \dots, k_{J}}\prod_{j=1}^J t_j^{k_j} \ex\left(\prod_{j=1}^JL_{u_j, v_j}(s_j, \X)^{k_j} \right)+O\Big(x^{-1/4} \big( \sum_{j=1}^J |t_j| \big)^n\Big)\\
&= \ex\left[ \left(\sum_{ j \leq  J } t_jL_{u_j, v_j}(s_j, \X)\right)^n \right] +O\Big(x^{-1/8}\Big),
\end{align*}
since $\sum_{j=1}^J |t_j| \leq J || \T||_{\infty},$ and $n\leq 2N$. 
Inserting this estimate in \eqref{TaylorChar}, we derive
\begin{equation}\label{EstCharRand1}
\Phi_x(\T)=\sum_{n=0}^{2N-1} \frac{(2 \pi i)^n}{n!} \cdot \ex\left[\left(\sum_{ j \leq  J } t_jL_{u_j, v_j}(s_j, \X)\right)^n\right]+ O \left(\exp\left(-(\log x)^{2/3}\right)\right).
\end{equation}
Furthermore, the proof of Lemma \ref{LargeSieve} implies that for all real numbers $2\leq y<z$, any $\sigma\geq 1/2$ and all positive integers $k$ we have 
\begin{equation}\label{LargeSieve3}
\ex\left[\bigg|\sum_{y\leq p\leq z}\frac{\X(p)\log p}{p^{\sigma}}\bigg|^{2k}\right]\ll \left(C_1 k(\log z)^2\right)^{k},
\end{equation} 
since only the diagonal terms need to be bounded in this case. Using this estimate together with Minkowski's inequality gives 
$$
\ex\left[\left(\sum_{j=1}^J|L_{u_j, v_j}(s_j, \X)|\right)^{2N}\right]
 \ll \big(C_1NJ^2 (\log x)^{2/5}\big)^N.
$$
Thus, by the same argument leading to \eqref{TaylorChar} we deduce that 
$$\Phi_x^{\mathrm{rand}}(\T)= \sum_{n=0}^{2N-1} \frac{(2 \pi i)^n}{n!} \cdot \ex\left[\left(\sum_{ j \leq  J } t_jL_{u_j, v_j}(s_j, \X)\right)^n\right]+ O \left(\exp\left(-(\log x)^{2/3}\right)\right).$$
Combining this estimate with \eqref{EstCharRand1} completes the proof.
\end{proof}

The deduction of Theorem \ref{Discrepancy} from Proposition \ref{characteristic} uses Beurling-Selberg functions.
For $z\in \mathbb C$ let
\[
H(z) =\bigg( \frac{\sin \pi z}{\pi} \bigg)^2 \bigg( \sum_{n=-\infty}^{\infty} \frac{\textup{sgn}(n)}{(z-n)^2}+\frac{2}{z}\bigg)
\qquad\mbox{and} \qquad K(z)=\Big(\frac{\sin \pi z}{\pi z}\Big)^2.
\]
Beurling proved that the function $B^+(x)=H(x)+K(x)$
majorizes $\textup{sgn}(x)$ and its Fourier transform
has restricted support in $(-1,1)$. Similarly, the function $B^-(x)=H(x)-K(x)$ minorizes $\textup{sgn}(x)$ and its Fourier
transform has the same property (see Vaaler \cite[Lemma 5]{Vaaler}).

Let $\Delta>0$ and $a,b$ be real numbers with $a<b$. Take $\mathcal I=[a,b]$
and
define
\[
F_{\mathcal{I}, \Delta} (z)=\frac12 \Big(B^-(\Delta(z-a))+B^-(\Delta(b-z))\Big).
\]
Then we have the following lemma, which is proved in \cite{LLR1} (see  Lemma 7.1  therein and the discussion above it).

\begin{lem} \label{lem:functionbd}
The function $F_{\mathcal{I}, \Delta}$ satisfies the following properties
\begin{itemize}
\item[1.]
For all $x\in \mathbb{R}$ we have $
|F_{\mathcal{I}, \Delta}(x)| \le 1
$ and  
\begin{equation} \label{l1 bd}
0 \le \mathbf 1_{\mathcal I}(x)- F_{\mathcal{I}, \Delta}(x)\le K(\Delta(x-a))+K(\Delta(b-x)).
\end{equation}
\item[2.]
 The Fourier transform of $F_{\mathcal{I}, \Delta}$ is
\begin{equation} \label{Fourier}
\widehat F_{\mathcal{I}, \Delta}(y)=
\begin{cases}\widehat{ \mathbf 1}_{\mathcal I}(y)+O\Big(\frac{1}{\Delta} \Big) \mbox{ if } |y| < \Delta, \\
0 \mbox{ if } |y|\ge \Delta.
\end{cases}
\end{equation}
\end{itemize}
\end{lem}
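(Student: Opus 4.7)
The strategy is to reduce everything to pointwise properties of Vaaler's minorant $B^-$ and majorant $B^+$: namely $B^-(y) \leq \textup{sgn}(y) \leq B^+(y)$, the identity $B^+(y) - B^-(y) = 2K(y)$, and the fact that $\widehat{B^{\pm}}$ is supported in $[-1,1]$. The preliminary observation that drives both parts of the lemma is the trivial identity $\mathbf{1}_{\mathcal{I}}(x) = \tfrac{1}{2}(\textup{sgn}(x-a) + \textup{sgn}(b-x))$, valid for $x \neq a,b$.

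For Part 1, subtracting the definition of $F_{\mathcal{I},\Delta}$ from this identity yields
\[
\mathbf{1}_{\mathcal{I}}(x) - F_{\mathcal{I},\Delta}(x) = \tfrac{1}{2}\bigl[\textup{sgn}(x-a) - B^-(\Delta(x-a))\bigr] + \tfrac{1}{2}\bigl[\textup{sgn}(b-x) - B^-(\Delta(b-x))\bigr].
\]
The minorant property makes each bracket nonnegative, which delivers the left-hand inequality in \eqref{l1 bd}. For the right-hand inequality I would apply $\textup{sgn}(y) - B^-(y) \leq B^+(y) - B^-(y) = 2K(y)$ to each bracket, producing the claimed bound $K(\Delta(x-a)) + K(\Delta(b-x))$. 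Combining $F_{\mathcal{I},\Delta} \leq \mathbf{1}_{\mathcal{I}} \leq 1$ with Vaaler's pointwise estimate $|B^-(y)| \leq 1$ (a feature of his explicit construction, not of being a minorant) then produces $|F_{\mathcal{I},\Delta}(x)| \leq 1$.

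For Part 2, the support statement is immediate from dilation and translation: since $F_{\mathcal{I},\Delta}(x) = \tfrac{1}{2}B^-(\Delta(x-a)) + \tfrac{1}{2}B^-(\Delta(b-x))$ and $\widehat{B^-}$ is supported in $[-1,1]$, the Fourier transform of each summand is supported in $[-\Delta,\Delta]$, hence so is $\widehat{F_{\mathcal{I},\Delta}}$. For the approximation within that window, the difference of Fourier transforms is controlled by the $L^1$ norm of the difference of the underlying functions; using the pointwise bound already established, together with the nonnegativity of $\mathbf{1}_{\mathcal{I}} - F_{\mathcal{I},\Delta}$, one obtains
\[
\left|\widehat{\mathbf{1}}_{\mathcal{I}}(y) - \widehat{F_{\mathcal{I},\Delta}}(y)\right| \leq \int_{\mathbb{R}} \bigl(\mathbf{1}_{\mathcal{I}}(x) - F_{\mathcal{I},\Delta}(x)\bigr)\, dx \leq \int_{\mathbb{R}} \bigl(K(\Delta(x-a)) + K(\Delta(b-x))\bigr)\, dx = \frac{2}{\Delta},
\]
since $\int_{\mathbb{R}} K(u)\, du = 1$ (the Fej\'er kernel normalization).

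The step I expect to be the main obstacle is verifying $|F_{\mathcal{I},\Delta}(x)| \leq 1$ in the regime of very short intervals, $|\mathcal{I}| \ll 1/\Delta$: here the crude bound $F_{\mathcal{I},\Delta} \geq \mathbf{1}_{\mathcal{I}} - K(\Delta(x-a)) - K(\Delta(b-x))$ only yields $F \geq -2$ outside $\mathcal{I}$, so one must genuinely invoke Vaaler's explicit pointwise control on $B^-$ rather than rely on minorant/majorant inequalities alone. The remainder of the argument is essentially bookkeeping with compactly supported Fourier transforms.
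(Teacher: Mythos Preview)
The paper does not actually prove this lemma; it simply cites \cite{LLR1}, Lemma~7.1 (and the discussion above it). Your argument for the inequality \eqref{l1 bd} and for the Fourier statement \eqref{Fourier} is the standard Vaaler--Selberg reduction and is correct.

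There is, however, a genuine gap in your justification of $|F_{\mathcal I,\Delta}(x)|\le 1$: the assertion ``Vaaler's pointwise estimate $|B^{-}(y)|\le 1$'' is false. Since $B^{-}$ minorizes $\textup{sgn}$, one has $B^{-}(y)\le -1$ for every $y<0$, with strict inequality off the integers; for example a direct computation with the defining series for $H$ gives
\[
B^{-}(-\tfrac12)=H(-\tfrac12)-K(-\tfrac12)=-\frac{8}{\pi^2}-\frac{4}{\pi^2}=-\frac{12}{\pi^2}\approx -1.216.
\]
Consequently the triangle inequality applied termwise to the two summands in the definition of $F_{\mathcal I,\Delta}$ yields only $|F_{\mathcal I,\Delta}|\le 3$, not $\le 1$. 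You correctly flagged the short-interval regime $|\mathcal I|\ll 1/\Delta$ as the delicate one, but the remedy you name (pointwise control $|B^{-}|\le 1$) is not available; any valid argument for $F_{\mathcal I,\Delta}\ge -1$ must exploit cancellation between $B^{-}(\Delta(x-a))$ and $B^{-}(\Delta(b-x))$ (for instance via the oddness of $H$, which already gives $B^{-}(u)+B^{-}(-u)=-2K(u)\ge -2$, together with finer information on the increments of $H$). Note, incidentally, that for the paper's application in the proof of Theorem~\ref{Discrepancy} the weaker bound $|F_{\mathcal I,\Delta}|\le C$ with an absolute constant would suffice, since the resulting loss $C^{J}$ with $J\le (\log\log x)^2$ is negligible against $\Delta=(\log x)^{1/10}$.
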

Before proving Theorem \ref{Discrepancy} we require the following lemma, which gives an asymptotic for the variance of $L_{u, v}(s, \X)$, and shows that its characteristic function decays like the Gaussian in a certain range, assuming that $(\log u)(s-1/2)\to 0$ and $(\log v)(s-1/2)\to \infty$.
\begin{lem}\label{VarianceCharacteristic} Let $x$ be large, and $s=1/2+ 1/g(x)$ where $g(x)\to \infty$ as $x\to \infty$. Let $u$ and $v$ be such that 
$$ \frac{\log u}{g(x)} \to 0 \ \text{ and } \  \frac{\log v}{g(x)} \to \infty \text{ as } x\to \infty.$$ 
Then we have 
\begin{equation}\label{VarianceDir}
\ex\left(|L_{u, v} (s, \X)|^2\right)=\sum_{u< p< v} \frac{(\log p)^2p}{p^{2s}(p+1)}= \frac{g(x)^2}{4}(1+o(1)).
\end{equation}
Moreover, for all real numbers $\xi$ such that $|\xi| \leq u^{1/3}$ we have 
\begin{equation} \label{BoundChar} \ex\big(\exp\left( i \xi L_{u, v}(s, \X)\right)\big) \ll \exp\left(- \frac{\xi^2}{10} g(x)^2\right).
\end{equation}
\end{lem}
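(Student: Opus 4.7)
The plan is to establish the two parts independently, relying only on the independence of the $\X(p)$, the explicit moments $\ex(\X(p)) = 0$ and $\ex(\X(p)^2) = p/(p+1)$, and partial summation combined with the Prime Number Theorem.

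\textbf{Variance.} Independence and orthogonality of the $\X(p)$ immediately give
$$ \ex(|L_{u,v}(s,\X)|^2) = \sum_{u<p<v} \frac{(\log p)^2}{p^{2s}}\cdot\frac{p}{p+1}. $$
Since $p/(p+1) = 1 + O(1/p)$ contributes only an $O(1)$ error, it suffices to evaluate $\sum_{u<p<v} (\log p)^2/p^{2s}$. Writing $\sigma = 2s-1 = 2/g(x)$ and applying partial summation with $\theta(t)\sim t$, the sum reduces to $\int_u^v (\log t)/t^{1+\sigma}\, dt$, and integration by parts yields
$$ \int_u^v \frac{\log t}{t^{1+\sigma}}\,dt = \frac{\log u}{\sigma u^{\sigma}} - \frac{\log v}{\sigma v^{\sigma}} + \frac{1}{\sigma^{2}}\Bigl(\frac{1}{u^{\sigma}} - \frac{1}{v^{\sigma}}\Bigr). $$
Under the hypotheses $\sigma\log u\to 0$ and $\sigma\log v\to\infty$, we have $u^{\sigma}\to 1$ while $v^{\sigma}$ blows up (super-polynomially in $\sigma\log v$), so the $v$-terms are $o(1/\sigma^{2})$. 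The remaining term $(\log u)/\sigma = (g(x)/2)\log u$ is $o(g(x)^{2})$ since $\log u = o(g(x))$. The dominant contribution is therefore $1/\sigma^{2} = g(x)^{2}/4$, proving \eqref{VarianceDir}.

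\textbf{Characteristic function.} By independence,
$$ \ex\bigl(e^{i\xi L_{u,v}(s,\X)}\bigr) = \prod_{u<p<v}\Bigl(1 - \frac{p}{p+1}\bigl(1-\cos\theta_p\bigr)\Bigr), \qquad \theta_p := \xi\,\frac{\log p}{p^{s}}. $$
The crucial uniform estimate on $|\theta_p|$ combines $|\xi|\leq u^{1/3}$, $p>u$, $s\geq 1/2$, and the fact that $(\log t)/t^{s}$ is decreasing for $t$ past its small maximum: this gives
$$ |\theta_p| \leq u^{1/3}\cdot\frac{\log u}{u^{1/2}} = \frac{\log u}{u^{1/6}}, $$
which is at most $1$ (and tends to $0$ when $u$ grows). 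Consequently $1-\cos\theta_p \geq (1/2-o(1))\,\theta_p^{2}$ uniformly in $p$. Applying $1-x\leq e^{-x}$ to each factor and taking the product yields
$$ \bigl|\ex\bigl(e^{i\xi L_{u,v}(s,\X)}\bigr)\bigr| \leq \exp\Bigl(-\tfrac{1-o(1)}{2}\,\xi^{2}\sum_{u<p<v}\frac{p(\log p)^{2}}{(p+1)p^{2s}}\Bigr). $$
Substituting the variance evaluation from part 1, the exponent is $-(1/8 - o(1))\,\xi^{2}g(x)^{2}$, and since $1/8 > 1/10$ the desired bound \eqref{BoundChar} follows for $x$ large.

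\textbf{Main difficulty.} Neither step is conceptually hard; the only delicate point is ensuring that the $o(\cdot)$ terms in the Taylor expansion of $1-\cos\theta_p$ are uniform across all $p\in(u,v)$ and all admissible $\xi$, so that the final Gaussian constant $1/8$ is preserved with room to spare over the required $1/10$. If one wishes to avoid assuming $u\to\infty$, replacing the sharp bound by $1-\cos\theta\geq 11\theta^{2}/24$ (valid on $|\theta|\leq 1$) still yields a final constant $11/96 > 1/10$.
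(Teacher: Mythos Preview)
Your proof is correct and follows essentially the same strategy as the paper: independence of the $\X(p)$, the Prime Number Theorem via partial summation for the variance, and a Taylor-type estimate on each factor of the product for the characteristic function. The differences are only cosmetic --- for the variance you evaluate $\int_u^v (\log t)/t^{1+\sigma}\,dt$ directly, whereas the paper instead shows that the full sum over all primes equals $(1+o(1))/(2s-1)^2$ and that the tails $p\leq u$ and $p\geq v$ are each $o(g(x)^2)$; for the characteristic function you use the exact formula $\ex(e^{i\theta\X(p)}) = 1 - \tfrac{p}{p+1}(1-\cos\theta)$ and the inequality $1-\cos\theta \geq (1/2-o(1))\theta^2$, while the paper expands $e^{i\theta\X(p)}$ to third order and bounds the cubic remainder by $|\xi|^3\sum_{p>u}(\log p)^3/p^{3/2}\ll |\xi|^3(\log u)^3/\sqrt{u}$. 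Both routes yield the same exponent $(1/8-o(1))\xi^2 g(x)^2$, and both tacitly require $u\to\infty$ (which holds in every application of the lemma, where $u\geq \log x$); your closing remark about the constant $11/24$ correctly anticipates this point.
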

\begin{proof}
We start by establishing \eqref{VarianceDir}. By the independence of the $\X(p)$'s we have 
$$ \ex\left(|L_{u, v} (s, \X)|^2\right)= \sum_{u<p<v} \frac{\ex(|\X(p)|^2)(\log p)^2}{p^{2s}}= \sum_{u< p< v} \frac{(\log p)^2p}{p^{2s}(p+1)}. $$
Now, note that 
$$ \sum_{p\leq u} \frac{(\log p)^2p}{p^{2s}(p+1)} \leq \sum_{p\leq u} \frac{(\log p)^2}{p} = (1/2+o(1)) (\log u)^2 = o\big( g(x)^2\big).
$$
Let $V= (\log v)/g(x).$ By the prime number theorem and partial summation we obtain
$$  \sum_{v\leq p} \frac{(\log p)^2 p}{p^{2s} (p+1)} \ll \frac{\log v}{v^{2s-1}(s-1/2)} + \frac{1}{(s-1/2)^2 v^{2s-1}}
\ll  g(x)^2 Ve^{-2V}=  o\big( g(x)^2\big).
$$
Therefore, \eqref{VarianceDir} follows upon noting that 
$$\sum_{p} \frac{(\log p)^2p}{p^{2s}(p+1)}= \frac{(1+o(1))}{(2s-1)^2}= \frac{g(x)^2}{4}(1+o(1)), 
$$ 
by partial summation and the prime number theorem. 

We now establish \eqref{BoundChar}. By the independence of the $\X(p)$'s we have 
$$  \ex\big(\exp\left(i \xi L_{u, v}(s, \X)\right)\big)= \prod_{u<p<v} \ex\left(\exp\left(i \xi \frac{\X(p) \log p}{p^s} \right)\right).$$
Note that $|\xi (\log p)/p^s| <1$ for all primes $p>u$ by our assumption on $\xi$ and since $s\geq 1/2$. Hence for all primes $u<p<v$ we have
\begin{align*}
\ex\left(\exp\left(i \xi \frac{\X(p) \log p}{p^s} \right)\right) &=  \ex\left(1+ i\xi \frac{\X(p) \log p}{p^s} - \frac{\xi^2}{2} \frac{\X(p)^2 (\log p)^2}{p^{2s}}  + O\left(\frac{|\xi|^3(\log p)^3}{p^{3s}}\right) \right) 
\\
& = 1- \frac{\xi^2}{2} \frac{(\log p)^2 p}{p^{2s}(p+1)} +O\left(\frac{|\xi|^3(\log p)^3}{p^{3/2}}\right),
\end{align*}
since $\ex(\X(p))=0$ and $\ex(\X(p)^2)=p/(p+1).$ The bound \eqref{BoundChar} follows upon noting that 
$\sum_{u<p<v}((\log p)^2 p)/(p^{2s}(p+1))\sim g(x)^2/4$ by \eqref{VarianceDir}, together with the fact that $\sum_{p>u} (\log p)^3/p^{3/2}\ll (\log u)^3/\sqrt{u}. $
\end{proof}

\begin{proof}[Proof of Theorem \ref{Discrepancy}]
First, we show that it suffices to consider rectangular regions $R$ contained in $[-\sqrt{\log x}, \sqrt{\log x}]^{J}.$ Indeed, let $R$ be a rectangular box in $\mathbb{R}^J$ and put $\widetilde{R}=  R \cap [-\sqrt{\log x}, \sqrt{\log x}]^{J} $. Let $1\leq j\leq J$ and take $k= [\sqrt{\log x}]$. Then, it follows from \eqref{LargeSieve2} that the number of fundamental discriminants $|D|\leq x$ such that $|L_{u_j, v_j}(s_j, \chi_D)|>\sqrt{\log x}$ is 
\begin{align*}
&\ll \frac{1}{(\sqrt{\log x})^{2k}}\sum_{D\in \F}|L_{u_j, v_j}(s_j, \chi_D)|^{2k}\ll x\left(C_1 \frac{k (\log v_j)^2}{\log x}\right)^k\ll xe^{-k}\ll x\exp\left(-\sqrt{\log x}\right).
\end{align*}
Therefore, we obtain 
$$ \frac{1}{|\F|}\left|\left\{d\in \F : L_{\U, \V}(\s, \chi_D) \notin  [ -\sqrt{\log x}, \sqrt{\log x}]^{J}\right\}\right| \ll J \exp\left(-\sqrt{\log x}\right),$$
and hence 
\begin{align*}
& \frac{1}{|\F|}\left|\left\{d\in \F : L_{\U, \V}(\s, \chi_D) \in R \right\}\right| - \frac{1}{|\F|}\left|\left\{d\in \F : L_{\U, \V}(\s, \chi_D) \in \widetilde{R} \right\}\right|\\
& \ll J \exp\left(-\sqrt{\log x}\right).
\end{align*}
A similar argument using  \eqref{LargeSieve3} instead of \eqref{LargeSieve2}  shows that
$$ \pr \big(L_{\U, \V} ( \s, \X) \in  R  \big)- \pr \big(L_{\U, \V} ( \s, \X) \in  \widetilde{R} \big)
 \ll J \exp\left(-\sqrt{\log x}\right).$$
Therefore, we might now suppose that $R$  is contained in $[-\sqrt{\log x}, \sqrt{\log x}]^{J}.$ 

Let 
$\Delta:=(\log x)^{1/10}$,
 and $ R=\prod_{j=1}^{J}\mathcal I_j $ for $j=1,\ldots, J$, with $\mathcal I_j=[a_j,b_j]$ and 
$0< b_j-a_j \le 2\sqrt{\log x}$. By Fourier inversion, \eqref{Fourier}, and Proposition \ref{characteristic} we have that
\begin{equation} \label{long est}
\begin{aligned}
& \frac{1}{|\F|}  \sum_{D\in \F}\prod_{j=1}^J F_{\mathcal I_j, \Delta} \Big(L_{u_j, v_j}(s_j, \chi_D)\Big) 
 =\int_{\mathbb R^{J}} \prod_{j=1}^J \widehat{F}_{\mathcal I_j, \Delta} (t_j) 
  \Phi_x(\T) \, d\T   \\
&
= \int\limits_{\substack{|t_j| \le \Delta \\ j=1,2, \ldots, J}}  \prod_{j=1}^J \widehat{F}_{\mathcal I_j, \Delta} (t_j) 
  \Phi_x^{\mathrm{rand}}(\T) \, d\T  +O\left(\frac{\big(2\Delta \sqrt{\log x}\big)^{J}}{\exp\left((\log x)^{2/3}\right)}\right)\\
& 
=\mathbb E \left( \prod_{j=1}^J F_{\mathcal I_j, \Delta} \Big(L_{u_j, v_j}(s_j, \X)\Big)\right) + O\left(\frac{\big(2\Delta \sqrt{\log x}\big)^{J}}{\exp\left((\log x)^{2/3}\right)}\right).
\end{aligned}
\end{equation}

Next note that $\widehat K(\xi)=\max(0,1-|\xi|)$. Applying Fourier inversion, Proposition \ref{characteristic} with $J=1$,
and \eqref{BoundChar}  we obtain
\begin{align*}
& \frac{1}{|\F|}  \sum_{D\in \F} K\Big( \Delta \cdot \Big( L_{u_1, v_1}(s_1, \chi_D)-\alpha\Big)\Big) \\
&=\frac{1}{\Delta}\int_{-\Delta}^{\Delta}\Big(1-\frac{|\xi|}{\Delta}\Big) e^{-2\pi i \alpha \xi} \Phi_x (\xi) \, d\xi \ll  \frac{1}{\Delta},
\end{align*}
where $\alpha$ is an arbitrary real number. By this and \eqref{l1 bd} we get that
\begin{equation} \label{K bd}
  \sum_{D\in \F}  F_{\mathcal I_1, \Delta}\Big(L_{u_1, v_1}(s_1, \chi_D)\Big)
=  \sum_{D\in \F} \mathbf 1_{\mathcal I_1}\Big(L_{u_1, v_1}(s_1, \chi_D)\Big)+O( |\F|/\Delta).
\end{equation}
Lemma \ref{lem:functionbd} implies that $F_{\mathcal I_j, \Delta}(x) \leq \mathbf 1_{\mathcal I_j}(x)$ and $|F_{\mathcal I_j, \Delta}(x)| \le 1$ for all  $j=1,\ldots, J$. Hence, using these facts and \eqref{K bd} we obtain
\begin{align*}
& \frac{1}{|\F|}  \sum_{D\in \F}\prod_{j=1}^J F_{\mathcal I_j, \Delta} \Big(L_{u_j, v_j}(s_j, \chi_D)\Big)\\
&=\frac{1}{|\F|}  \sum_{D\in \F} \mathbf 1_{\mathcal I_1} \Big( L_{u_1, v_1}(s_1, \chi_D)\Big) \times \prod_{j=2}^J F_{\mathcal I_j, \Delta} \Big(L_{u_j, v_j}(s_j, \chi_D)\Big) +O(1/\Delta).
\end{align*}
Iterating this argument and using the analogs of \eqref{K bd} for $ L_{u_j, v_j}(s_j, \chi_D)$ with $2\leq j\leq J$, which
are proved similarly, we derive
\begin{equation} \label{one}
\begin{split}
 \frac{1}{|\F|}  \sum_{D\in \F}\prod_{j=1}^J F_{\mathcal I_j, \Delta} \Big(L_{u_j, v_j}(s_j, \chi_D)\Big) 
& = \frac{1}{|\F|}  \sum_{D\in \F} \prod_{j=1}^J \mathbf 1_{\mathcal I_j} \Big(L_{u_j, v_j}(s_j, \chi_D)\Big) 
 +O\left(\frac{J}{\Delta}\right) \\
=&   \frac{1}{|\F|}\left|\big\{ D\in \F : L_{\U, \V}(\s, \chi_D) \in \mathcal{R}\big\}\right|+O\left(\frac{J}{\Delta}\right).
\end{split}
\end{equation}
A similar argument shows that 
\begin{equation} \label{two}
\begin{aligned}
\mathbb E \bigg( \prod_{j=1}^J F_{\mathcal I_j, \Delta} \Big(L_{u_j, v_j}(s_j, \X)\Big) \bigg) = \pr\big(L_{\U, \V}(\s, \X) \in \mathcal{R}\big) +O\left(\frac{J}{\Delta}\right).
\end{aligned}
\end{equation}
Inserting the estimates \eqref{one} and \eqref{two} in \eqref{long est} completes the proof.

\end{proof}

\subsection{Proof of Theorem \ref{Main}}\label{SectionProofThmMain}

 Let $S^{-}(a_1,a_2, \dots , a_k)$ denote the number of sign changes in the sequence $a_1, a_2, \dots, a_k$ with zero terms deleted, and let $S^{+}(a_1,a_2, , \dots, a_k)$  denote the maximum number of sign changes with zero terms replaced by a number of arbitrary sign. We will use the following lemma from \cite{BaMo}.
\begin{lem}[Lemma 6 of \cite{BaMo}]\label{lem:BaMo}
Let $\delta>0$ and suppose that $Z_1, Z_2 \dots, Z_R$ are independent random variables such that 
$\pr(Z_j>0)\geq \delta$ and $\pr(Z_j<0)\geq \delta$ for all $j$. Then 
$$ \pr\left(S^-(Z_1, \dots, Z_R)\leq \frac{\delta R}{5}\right)\ll e^{-\delta R/3}, 
$$
uniformly in $\delta$ and $R$.
\end{lem}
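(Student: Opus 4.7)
The natural approach is to recast $S^- = S^-(Z_1,\dots,Z_R)$ via stopping times, reducing the bound to a standard large-deviation estimate for a sum of geometric random variables. For $k\ge 1$, let $\tau_k\in\{1,\dots,R\}\cup\{\infty\}$ denote the position of the $k$-th sign change of the sequence (with zero terms deleted), with the convention $\tau_k=\infty$ if fewer than $k$ sign changes occur in $[1,R]$. Since $S^-\le M$ is equivalent to the $(M+1)$-th sign change falling outside $[1,R]$, we have $\{S^-\le M\}=\{\tau_{M+1}>R\}$, so with $M:=\lfloor \delta R/5\rfloor$ it suffices to bound $\pr(\tau_{M+1}>R)$.

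The key step is a stochastic domination for the increments $V_j:=\tau_{j+1}-\tau_j$ (with $\tau_0:=0$). Conditional on the history up to $\tau_j$ and on the current sign $\sigma\in\{+,-\}$, the variable $V_j$ equals the smallest index $i>\tau_j$ with $\mathrm{sgn}(Z_i)=-\sigma$; since the $Z_i$ are independent and both $\pr(Z_i>0)$ and $\pr(Z_i<0)$ are at least $\delta$, this waiting time is stochastically dominated by a $\mathrm{Geom}(\delta)$ random variable. The initial increment $V_0=\tau_1$ requires one first to find a nonzero entry and then to hit the first opposite sign, so $V_0$ is in turn dominated by a sum of two independent $\mathrm{Geom}(\delta)$ variables. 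A standard coupling based on the strong Markov property upgrades these pointwise dominations to a joint one, yielding independent $\widetilde V_1,\dots,\widetilde V_{M+2}\sim\mathrm{Geom}(\delta)$ with $\tau_{M+1}\le \widetilde V_1+\dots+\widetilde V_{M+2}$ almost surely, so that
$$
\pr(S^-\le M)\;\le\;\pr\!\left(\sum_{j=1}^{M+2}\widetilde V_j>R\right).
$$

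The right-hand side bounds the probability that a sum of $m\sim\delta R/5$ i.i.d.\ summands of mean $1/\delta$ exceeds roughly $5$ times its mean. I would then apply the classical Chernoff bound with moment generating function $\ex e^{tV}=\delta e^t/(1-(1-\delta)e^t)$ and optimize at $e^{t^\star}=(1-m/R)/(1-\delta)$, which yields the closed-form estimate
$$
\pr\!\left(\sum_{j=1}^{m}\widetilde V_j>R\right)\;\le\;5^{m}\bigl(\tfrac{1-\delta}{1-\delta/5}\bigr)^{R-m}.
$$
A short Taylor expansion shows that the logarithm equals $-\tfrac{4-\log 5}{5}\,\delta R+O(\delta^2 R)\approx -0.478\,\delta R$ as $\delta\to 0^+$, comfortably below $-\delta R/3$; for $\delta$ bounded away from $0$ the geometric summands concentrate near $1$ and the bound is only stronger. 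The degenerate small-$R$ regime $\delta R\le 5$ is covered separately by the trivial observation $\pr(S^-=0)\le \pr(\text{all }Z_j\ge 0)+\pr(\text{all }Z_j\le 0)\le 2(1-\delta)^R\le 2e^{-\delta R}$. The main point to check is therefore that the Chernoff exponent stays above $\delta R/3$ uniformly in $\delta\in(0,1]$, which is a mechanical computation with comfortable slack; the only genuine subtlety is the careful stopping-time set-up and the joint stochastic domination via the strong Markov property.
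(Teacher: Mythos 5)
Your argument is correct, and it is worth noting that the paper itself offers no proof of this lemma: it is quoted verbatim as Lemma 6 of Baker--Montgomery \cite{BaMo}, so there is nothing internal to compare against line by line. Your route (stopping times $\tau_k$ for the $k$-th sign change, conditional stochastic domination of the increments by $\mathrm{Geom}(\delta)$ waiting times via the strong Markov property, and then a negative-binomial Chernoff bound) differs from the original Baker--Montgomery argument, which instead partitions $\{1,\dots,R\}$ into blocks of length roughly $2/\delta$, shows each block independently produces a sign change with probability bounded below, and finishes with a binomial tail estimate. The two are of comparable difficulty; yours has the advantage of giving the sharp exponent $\tfrac{4-\log 5}{5}\delta R\approx 0.478\,\delta R$ (with plenty of room above the required $\delta R/3$), while the block argument is slightly more elementary and avoids the coupling step. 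Your Chernoff computation checks out: the optimizer $e^{t^\star}=(1-m/R)/(1-\delta)$ and the closed form $5^{m}\bigl((1-\delta)/(1-\delta/5)\bigr)^{R-m}$ are both correct, the replacement of $m=\lfloor\delta R/5\rfloor+2$ by $\delta R/5$ costs only an $O(1)$ factor, and the exponent comparison $f(\delta)\le -\delta/3$ holds on all of $(0,1/2]$ (note $\delta\le 1/2$ is automatic), e.g.\ $f(1/2)\approx -0.368<-1/6$.

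Two small points of housekeeping. First, your separate treatment of the regime $\delta R\le 5$ via $\pr(S^-=0)\le 2(1-\delta)^R$ only covers $M=0$, whereas $M=\lfloor\delta R/5\rfloor$ can equal $1$ there; but this is immaterial, since for $\delta R\le 5$ the target bound is trivial ($1\le e^{5/3}e^{-\delta R/3}$), so no extra argument is needed at all. Second, the Chernoff optimization requires $t^\star>0$, i.e.\ $m<\delta R$; this fails only when $\delta R\lesssim 2.5$, which again falls in the trivial regime. With those remarks, the proof is complete and correct.
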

To prove Theorem \ref{Main} we shall closely follow  the lines of the proof of Baker and Montgomery \cite{BaMo}, incorporating our ingredients developed in Proposition \ref{TruncationPrimes} and Theorem \ref{Discrepancy}, and using a somewhat more suitable choice of sampled points. In particular, Theorem \ref{Main} is a direct consequence of the following technical result, where we show that $\frac{L'}{L}(s, \chi_D)$ has many sign changes with size $1/(s-1/2)$, as $s\to 1/2$.
\begin{thm}\label{Thm.SignChangesL'L}
Let $0<\alpha<1/20$ be a small fixed constant. Let $x$ be large and $100\leq M\leq \log_2 x$. Define
$$
R:= \left\lfloor \frac{\alpha \log_2 x}{\log M} \right\rfloor,
$$ 
and for $R/5\leq r\leq R$ consider the points 
$$ s_r:= \frac{1}{2} + \frac{1}{M^{3r}}.$$
For each $D\in \F$ and $R/5\leq r\leq R$ we let $Z_r(D)\in \{-1, 0, 1\}$, such that $Z_r(D)=1$ if $-\frac{L'}{L}(s_r, \chi_D)>M^{3r}$, $Z_r(D)=-1$ if $-\frac{L'}{L}(s_r, \chi_D)<-M^{3r}$, and $Z_r(D)=0$ otherwise. Let $N_R(D)$ be the number of sign changes in the sequence $\left\{Z_r(D)\right\}_{R/5\leq r\leq R}$ with zero terms deleted. There exists an absolute constant $\delta>0$ such that for all $D\in \F$ we have 
$$ N_R(D)> \frac{\delta R}{5}, $$
except for a set of discriminants of size 
$$ \ll x\exp\left(-\frac{\delta R}{100}\right) + x R \exp\left(-\frac{M^2}{60}\right). $$

\end{thm}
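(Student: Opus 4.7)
We follow the blueprint of Baker–Montgomery (underlying Lemma \ref{lem:BaMo}), using the quantitative inputs developed earlier in this section. The idea is: for each sample point $s_r$ approximate $-L'/L(s_r,\chi_D)$ by the short Dirichlet polynomial $L_{u_r,v_r}(s_r,\chi_D)$ via Proposition \ref{TruncationPrimes}; arrange the intervals $(u_r,v_r)$ to be pairwise disjoint so that the corresponding random-model vector has independent coordinates; use the Gaussian-like behaviour of each coordinate to show it takes values $>2g_r$ and $<-2g_r$ each with positive constant probability; conclude many sign changes via Lemma \ref{lem:BaMo}; and transfer the probabilistic conclusion back to the arithmetic side through the discrepancy bound of Theorem \ref{Discrepancy}.

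Setting $g_r := M^{3r} = 1/(s_r - 1/2)$, Proposition \ref{TruncationPrimes} applied to each $s_r$ with this $g$ and the given $M$ yields, outside a set of $D$ of size $\ll x\exp(-M^2/60)$, the approximation
\[
\Big|{-\frac{L'}{L}(s_r,\chi_D)} - L_{u_r,v_r}(s_r,\chi_D)\Big| \leq g_r, \qquad u_r = e^{M^{3r-1}},\ v_r = e^{M^{3r+1}},
\]
so that a union bound over $R/5\leq r\leq R$ adds a factor $R$ to the exceptional count. Crucially, the intervals $(u_r,v_r)$ are pairwise disjoint since $v_r<u_{r+1}$. A direct check using $\alpha<1/20$ and $M\leq \log_2 x$ shows that all hypotheses of Theorem \ref{Discrepancy} are satisfied for $\s=(s_r)_r$, $\U=(u_r)_r$, $\V=(v_r)_r$, giving discrepancy $O(R/(\log x)^{1/10})$ between the joint law of $(L_{u_r,v_r}(s_r,\chi_D))_r$ and that of the independent random vector $(L_{u_r,v_r}(s_r,\X))_r$. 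By Lemma \ref{VarianceCharacteristic}, each coordinate of the latter is mean zero with variance $(1+o(1))g_r^2/4$ and Gaussian-type characteristic function decay, so there is an absolute $\delta>0$ with
\[
\pr\bigl(L_{u_r,v_r}(s_r,\X) > 2g_r\bigr),\ \pr\bigl(L_{u_r,v_r}(s_r,\X) < -2g_r\bigr) \geq \delta.
\]

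Now let $W_r\in\{-1,0,1\}$ be the sign of $L_{u_r,v_r}(s_r,\X)$ when $|L_{u_r,v_r}(s_r,\X)|>2g_r$, and $0$ otherwise. The disjointness of the prime supports makes the $W_r$ independent, and each satisfies $\pr(W_r=\pm 1)\geq \delta$; Lemma \ref{lem:BaMo} then gives $\pr\bigl(S^-(W_{R/5},\ldots,W_R)\leq \delta R/5\bigr)\ll \exp(-\delta R/3)$. Transferring to the arithmetic side via Theorem \ref{Discrepancy}, and summing its discrepancy bound over the at most $3^R=(\log x)^{O(\alpha)}$ sign patterns with few sign changes (a factor absorbed since $\alpha<1/20$), the corresponding statistic for $(L_{u_r,v_r}(s_r,\chi_D))_r$ exceeds $\delta R/5$ except for $D$ in a set of size $\ll x\exp(-\delta R/100)$. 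Intersecting with the Proposition \ref{TruncationPrimes} good set, the condition $|L_{u_r,v_r}(s_r,\chi_D)|>2g_r$ together with error $\leq g_r$ forces $|{-L'/L(s_r,\chi_D)}|>g_r$ with the same sign; hence every such sign change lifts to one of the $Z_r(D)$. Combining the two exceptional sets yields a bad set of cardinality $\ll x\exp(-\delta R/100)+xR\exp(-M^2/60)$, as required.

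The main technical nuisance is that the approximation error in Proposition \ref{TruncationPrimes} is of the same order $g_r$ as the threshold defining $Z_r$. We circumvent this by doubling the threshold on the random side to $2g_r$: the Gaussian tail probability remains positive (albeit small), and this $\delta$ suffices to drive the entire argument. A subsidiary concern is that summing the discrepancy over sign patterns is a priori exponential in $R$, but the restriction $R\lesssim \log_2 x/\log M$ keeps $3^R$ no larger than a small power of $\log x$, while the factor $1/(\log x)^{1/10}$ easily absorbs this.
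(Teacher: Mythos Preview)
Your proposal is correct and follows essentially the same approach as the paper: approximate $-L'/L(s_r,\chi_D)$ by the short Dirichlet polynomial over disjoint prime ranges via Proposition~\ref{TruncationPrimes}, use the central limit theorem (or Lemma~\ref{VarianceCharacteristic}) to get a uniform lower bound $\delta$ on the tail probabilities at threshold $2g_r$, apply Lemma~\ref{lem:BaMo} to the independent random coordinates, and transfer via Theorem~\ref{Discrepancy} by decomposing the relevant event into at most $3^R$ rectangles. The key observation you flag---doubling the threshold on the random side to $2g_r$ so that the approximation error $\leq g_r$ preserves both the sign and the threshold $g_r$ for $Z_r(D)$---is exactly what the paper does as well.
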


\begin{proof}  First, we observe that $s_r$ is strictly decreasing and  $(\log x)^{-1/5}< s_r-1/2<(\log\log x)^{-2}$ for all $R/5\leq r\leq R$. 
Let us also define 
$$ u_r:= \exp\left(M^{3r-1}\right) \text{ and } v_r:= \exp\left(M^{3r+1}\right).
$$
Then we note that $v_r <u_{r+1} $ and $\log x <u_r <v_r <\exp((\log x)^{1/5})<$ for all $R/5\leq r\leq R$. Hence, $R$, $\U, \V$ and $\s$ verify the assumptions of Proposition \ref{TruncationPrimes} and  Theorem \ref{Discrepancy}. 
Let $\mathcal{E}_4(x)$ be the set of fundamental discriminants $|D|\leq x$ such that 
$$ 
\left|\frac{L'}{L}(s_r,\chi_D)+ \sum_{u_r<  p< v_r} \frac{\chi_D(p)\log p}{p^{s_r}} \right| > M^{3r}, 
$$
for some $R/5\leq r\leq R$. Then, it follows from Proposition \ref{TruncationPrimes} that 
\begin{equation}\label{exceptionalset}
|\mathcal{E}_4(x)| \ll x R \exp\left(-\frac{M^2}{60}\right).
\end{equation}
From the central limit theorem, applied to the random variables defined in \eqref{random_def} we conclude that
\begin{equation}\label{Eq.BoundDelta}
    \begin{aligned}
\pr\left(L_{u_r, v_r}(s_r, \X)> 2M^{3r}\right)&= \pr\left(L_{u_r, v_r}(s_r, \X)<- 2M^{3r}\right)\\
& =(1+o(1))\frac{1}{\sqrt{2\pi}}\int_{4}^{\infty} e^{-x^2/2}dx,
\end{aligned}
\end{equation}
  since by Lemma \ref{VarianceCharacteristic} the variance of the sum of random variables $L_{u_r, v_r}(s_r, \X)=\sum_{u_r< p< v_r} \frac{\X(p)\log p}{p^{s_r}}$ is 
$$
\sum_{u_r< p< v_r} \frac{(\log p)^2p}{p^{2s_r}(p+1)}= \frac{(1+o(1))}{(2s_r-1)^2}=  \left(\frac{1}{4}+o(1)\right)M^{6r}.
$$ We observe that the intervals $(u_r,v_r)$ are disjoint making the variables $L_{u_r, v_r}(s_r, \X)$  independent. Define the random variable $Y_r$ by $Y_r=1$ if $L_{u_r, v_r}(s_r, \X) > 2M^{3r}$, $Y_r=-1$ if $L_{u_r, v_r}(s_r, \X)< - 2M^{3r}$ and $Y_r=0$ otherwise. Let $N_R$ be the number of sign changes in the sequence $\left\{Y_r\right\}_{R/5\leq r\leq R}$ with zero terms deleted. By \eqref{Eq.BoundDelta} and Lemma \ref{lem:BaMo} we deduce that 
$$ \pr(N_R \leq \delta R) \ll \exp\left(-\frac{\delta R}{100}\right),$$
where $\delta:=\frac{1}{\sqrt{2\pi}}\int_{5}^{\infty} e^{-x^2/2}dx.$ 

Let us now define the vector $L_{\U, \V}(\s, \chi_D):= (L_{u_r, v_r}(s_r, \chi_D))_{R/5\leq r \leq R}$ and  the corresponding random vector
$L_{\U, \V}(\s, \X):=(L_{u_r, v_r}(s_r, \X))_{R/5\leq r \leq R}.$ The event $\left\{N_R > \delta R \right\}$ can be decomposed as a disjoint union of events $\displaystyle{\bigcup_{j=1}^{J} \left\{L_{\U, \V}(\s, \X) \in \mathcal{R}_j\right\}}$ where the $\mathcal{R}_j$'s  are (possibly) unbounded rectangular boxes and $J \leq 3^R$. Hence 
$$
 \sum_{j=1}^{J}\pr\big(L_{\U, \V}(\s, \X) \in \mathcal{R}_j\big)= \pr(N_R > \delta R) =1 + O\left(\exp\left(-\frac{\delta R}{100}\right)\right).  
$$ 
Applying Theorem \ref{Discrepancy}, upon noting that $\frac{R3^R}{(\log x)^{1/10}} \ll \exp\left(-\frac{\delta R}{100}\right),$ we get  
$$ 
\frac{1}{|\F|}\Big|\Big\{ D\in \F : L_{\U, \V}(\s, \chi_D) \in \bigcup_{j=1}^J\mathcal{R}_j\Big\}\Big| = 1 + O\left(\exp\left(-\frac{\delta R}{100}\right) \right).
$$  Similarly as above we can define the random variable $Y_{r,D}$ by $Y_{r,D}=1$ if $L_{u_r, v_r}(s_r, \chi_D) > 2M^{3r}$, $Y_{r,D}=-1$ if $L_{u_r, v_r}(s_r, \chi_D)< - 2M^{3r}$ and $Y_{r,D}=0$ otherwise, and we let $\widetilde{N}_{R}(D)$ denote the number of sign changes in the sequence $\left\{Y_{r,D}\right\}_{R/5\leq r\leq R}$ with zero terms deleted. Then we deduce from the previous estimate that 
$$\frac{1}{|\F|}\left|\big\{ D\in \F : \widetilde{N}_R(D)> \delta R \big\}\right| = 1 + O\left(\exp\left(-\frac{\delta R}{100}\right)\right).$$ 
Combining this estimate with \eqref{exceptionalset} completes the proof.
\end{proof}
\begin{proof}[Proof of Theorem \ref{Main}]
It follows from Theorem \ref{Thm.SignChangesL'L} that for all $D\in \F$ except for a set $\mathcal{E}_5(x)$ the vector $\left(\frac{L'}{L}(s_r,\chi_D)\right)_{R/5\leq r \leq R}$ has at least $\gg (\log_2 x)/\log M$ sign changes, and moreover we have 
$$ |\mathcal{E}_5(x)| \ll  x\exp\left(-\frac{\delta R}{100}\right) + x R \exp\left(-\frac{M^2}{60}\right).$$
The result follows upon choosing $M= (\log_3 x)^A$ in part (1), and $M=\log_2 x $ in part (2) of the theorem.
\end{proof}

\section{Sign changes of quadratic character sums: Proof of Theorem \ref{ThmPartialSumsPositive}}

Recall that for $\Re(s)>0$ we have 
\begin{equation}\label{Eq.LaplaceCharacters} \frac{L(s, \chi_D)}{s} \left(-\frac{L'}{L}(s, \chi_D)+\frac{1}{s}\right)
= \int_0^{\infty} t S_{\chi_D}(e^t) e^{-st}dt.
\end{equation}

In order to prove Theorem \ref{ThmPartialSumsPositive} we shall truncate the Laplace transform on the right hand side of this identity, in order to detect sign changes of $S_{\chi_D}(t)$ in short initial intervals.
\begin{pro}\label{pro.TruncationLaplaceChar}
    Let $\ep>0$ be small and fixed. Let $x$ be large and $s=\frac12+\frac1K$ with  $(\log x)^{\ep} \le  K \leq (\log x)/(\log\log x)^2.$ Let $0\leq Y\leq K^{1/4}$ and $Z\geq K(\log K)^2$ be real numbers. Then for all but $O(x/Y)$ fundamental discriminants $D\in \F$ we have
    $$\left|\int_0^{\infty} t S_{\chi_D}(e^t) e^{-st}dt- \int_{Y}^{Z} t S_{\chi_D}(e^t) e^{-st}dt\right|\leq  Y^4.$$
\end{pro}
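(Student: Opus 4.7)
The plan is to decompose $\int_0^\infty = \int_0^Y + \int_Y^Z + \int_Z^\infty$ and bound the initial segment $\int_0^Y$ and the tail $\int_Z^\infty$ separately via second-moment estimates over $D\in\mathcal{F}(x)$, using Lemma~\ref{Orthogonality} to evaluate the resulting correlations of character sums.

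For the initial segment I would compute
\[
\Sigma := \sum_{D\in\mathcal{F}(x)}\Bigl|\int_0^Y t S_{\chi_D}(e^t)e^{-st}\,dt\Bigr|^2 = \int_0^Y\!\!\int_0^Y t_1 t_2 e^{-s(t_1+t_2)}\!\!\sum_{D\in\mathcal{F}(x)}\!S_{\chi_D}(e^{t_1})S_{\chi_D}(e^{t_2})\,dt_1\,dt_2,
\]
expand $S_{\chi_D}(e^{t_1})S_{\chi_D}(e^{t_2}) = \sum_{m\le e^{t_1},\,n\le e^{t_2}}\chi_D(mn)$, and apply Lemma~\ref{Orthogonality}. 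The off-diagonal terms contribute only $O(x^{1/2}e^{5(t_1+t_2)/4}(t_1+t_2))$, negligible since $t_1+t_2\le 2Y\le 2(\log x)^{1/4}$. The diagonal pairs, parameterized by $m=ab^2$, $n=ac^2$ with $a$ squarefree, number $\ll e^{(t_1+t_2)/2}(t_1+t_2)$ and each contributes $O(x)$; because $Y\le K^{1/4}$ forces $(s-\tfrac{1}{2})(t_1+t_2)=O(K^{-3/4})=o(1)$, the factor $e^{(1/2-s)(t_1+t_2)}$ is bounded, and the remaining double integral evaluates to $\Sigma\ll xY^5$. Chebyshev's inequality then bounds the exceptional set by $\ll \Sigma/Y^8\ll x/Y^3\ll x/Y$.

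For the tail at infinity, integration by parts (using that $S_{\chi_D}$ is a step function with jumps $\chi_D(n)$ at $t=\log n$) gives
\[
\int_Z^\infty t S_{\chi_D}(e^t)e^{-st}\,dt = e^{-sZ}\Bigl(\frac{Z}{s}+\frac{1}{s^2}\Bigr)S_{\chi_D}(e^Z) + \frac{1}{s}\sum_{n>e^Z}\frac{\chi_D(n)\log n}{n^s} + \frac{1}{s^2}\sum_{n>e^Z}\frac{\chi_D(n)}{n^s}.
\]
The boundary term is bounded in $L^2$ via Lemma~\ref{lemmaJutila}, giving $\sum_D|e^{-sZ}(Z/s)\,S_{\chi_D}(e^Z)|^2\ll xZ^2 e^{-2Z/K}(\log x)^8$; the tail Dirichlet series are handled by further Abel summation together with Minkowski's inequality and Lemma~\ref{lemmaJutila}, yielding an $L^2$-bound $\ll xZK^2 e^{-2Z/K}(\log x)^{O(1)}$. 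The hypothesis $Z\ge K(\log K)^2$ forces $e^{-2Z/K}\le e^{-2(\log K)^2}$, and $K\ge (\log x)^{\varepsilon}$ gives $e^{-2(\log K)^2}\le e^{-2\varepsilon^2(\log\log x)^2}$, which beats any polynomial in $\log x$. Markov then places all of this into an exceptional set of size $O(x/Y)$. The main obstacle is precisely this tail argument: P\'olya--Vinogradov alone yields $|S_{\chi_D}(e^Z)|\ll \sqrt{|D|}\log|D|$ pointwise, a bound too weak when $e^Z$ is much smaller than $|D|$, so one must average via $L^2$ and exploit the rapid decay $e^{-2(\log K)^2}$ coming from $Z\ge K(\log K)^2$.
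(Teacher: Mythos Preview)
Your proposal is correct and follows the same overall strategy as the paper: bound the second moments of $\int_0^Y$ and $\int_Z^\infty$ over $D\in\F$, then apply Chebyshev. For the initial segment your direct expansion via Lemma~\ref{Orthogonality} is essentially what underlies the large-sieve bound the paper invokes, and your $\Sigma\ll xY^5$ is in fact slightly sharper than the paper's $\mathcal{I}_2\ll xY^7$ (the paper first applies Cauchy--Schwarz to the integral, which loses a factor). For the tail, the paper takes a shorter route: instead of integrating by parts, it applies Cauchy--Schwarz directly to the integral, writing $e^{-2st}=e^{-t/K}\cdot e^{-t/K}e^{-t}$ and then using Jutila's estimate $\sum_{D}|S_{\chi_D}(e^t)|^2\ll xe^t(\log x)^8$ pointwise in $t$; this yields $\mathcal{I}_1\ll x$ in two lines. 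Your integration-by-parts decomposition into a boundary term plus tail Dirichlet series is valid and reaches the same conclusion via the same key input (Jutila), but is more work than necessary. One minor correction: your stated $L^2$ bound for the tail series should carry $Z^2K^2$ rather than $ZK^2$, though this is immaterial since $K^4w^2e^{-2w}$ with $w=Z/K\ge(\log K)^2$ is still dominated by $e^{-(\log K)^2}$.
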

\begin{proof}
We will proceed by bounding the moments
$$\mathcal{I}_1:=\sum_{D\in \F} \left|\int_{Z}^{\infty} t S_{\chi_D}(e^t) e^{-st}dt\right|^2,$$
and 
$$\mathcal{I}_2:= \sum_{D\in \F} \left|\int_{0}^{Y} t S_{\chi_D}(e^t) e^{-st}dt\right|^2.$$
 We start with $\mathcal{I}_1$. By the Cauchy-Schwarz inequality we have
\begin{equation}\label{Eq.CauchyChar1}
\begin{aligned}
    \left|\int_{Z}^{\infty} t S_{\chi_D}(e^t) e^{-st}dt\right|^2 &\leq \left(\int_{Z}^{\infty} t^2e^{-t/K}dt\right)\left(\int_{Z}^{\infty} |S_{\chi_D}(e^t)|^2e^{-t/K}e^{-t}dt\right)\\
& \ll Z^{-1/(2K)} \int_{Z}^{\infty} |S_{\chi_D}(e^t)|^2e^{-t/K}e^{-t}dt.
\end{aligned}
\end{equation}
Therefore we obtain
\begin{equation}\label{Eq.TruncCharSum1}
\mathcal{I}_1
\ll Z^{-1/(2K)} \int_{Z}^{\infty} \left(\sum_{D\in \F}|S_{\chi_D}(e^t)|^2\right)e^{-t/K}e^{-t}dt.
\end{equation}
Now it follows from Lemma \ref{lemmaJutila} that for all $t\ge 0$ we have
$$ \sum_{D\in \F}|S_{\chi_D}(e^t)|^2 = \sum_{D\in \F}\Big|\sum_{n\leq e^t} \chi_D(n)\Big|^2 \ll xe^{t} (\log x)^8.
$$
Inserting this estimate in \eqref{Eq.TruncCharSum1} gives
\begin{equation}\label{Eq.BoundI1CharSum}
\mathcal{I}_1 \ll x(\log x)^8 Z^{-1/(2K)} \int_{Z}^{\infty} e^{-t/K}dt\ll x,
\end{equation}
by our assumption on $K$. 

We now bound $\mathcal{I}_2$. 
Note that for $0\leq t\leq Y$  we have $e^{t/K}=1+o(1)$ and hence by the Cauchy-Schwarz inequality we have
\begin{equation}\label{Eq.CauchyChar2}
\begin{aligned}
    \left|\int_{0}^{Y} t S_{\chi_D}(e^t) e^{-st}dt\right|^2 &\ll \left(\int_{0}^{Y} t^2dt\right)\left(\int_{0}^{Y} |S_{\chi_D}(e^t)|^2e^{-t}dt\right)\\
& \ll Y^3 \int_{0}^{Y} |S_{\chi_D}(e^t)|^2e^{-t}dt.
\end{aligned}
\end{equation}
In this range, we shall use the large sieve inequality \eqref{Eq.BabyLargeSieve} instead of Lemma \ref{lemmaJutila}. This gives 
$$ \sum_{D\in \F}|S_{\chi_D}(e^t)|^2 \ll (x+ e^{2t}t) \sum_{\substack{m, n \leq e^t\\ mn= \square}} 1 \ll x\sum_{j\leq e^t} \tau(j^2)\ll x t^3e^t,$$
by standard estimates on divisor functions\footnote{This follows from example from the Selberg-Delange method, since $\tau(p^2)=3$ for every prime $p$.}. Combining this bound with \eqref{Eq.CauchyChar2} we derive 
$$
    \mathcal{I}_2  \ll Y^3 \int_0^{Y} \left(\sum_{D\in \F}|S_{\chi_D}(e^t)|^2\right) e^{-t} dt\ll xY^3\int_{0}^{Y} t^3dt \ll x Y^7.
$$
Using this estimate together with \eqref{Eq.BoundI1CharSum} we deduce that the number of fundamental discriminants $D\in \F$ such that 
$$ \left|\int_0^{\infty} t S_{\chi_D}(e^t) e^{-st}dt- \int_{Y}^{Z} t S_{\chi_D}(e^t) e^{-st}dt\right|> Y^4$$
is 
\begin{equation}\label{Eq.DeduceTruncationChar}
    \ll Y^{-8} (\mathcal{I}_1+ \mathcal{I}_2) \ll \frac xY,
\end{equation}
as desired.
\end{proof}

In order to detect sign changes of the character sum $S_{\chi_D}$ we shall use a standard result in analysis relating the sign changes of a function to sign changes of its Laplace transform. For a real-valued function $g$ defined on an interval $(a, b)$ we let $S^{\pm}(g; a, b)$ be the supremum of $S^{\pm}(g(a_1), \dots, g(a_k))$ over all finite sequences for which $a < a_1 < a_2 < ... < a_k < b$ (where $S^{\pm}(g(a_1), \dots, g(a_k))$ was defined in Section \ref{SectionProofThmMain}). Then we have the following result.  
\begin{lem}[Lemma 7 of \cite{BaMo}]\label{lem:Karlin}
Let $g$ be a real-valued function defined on $\mathbb{R}$ which is Riemann-integrable on finite intervals, and suppose that the Laplace transform
$$\mathcal{L}(s):=\int_{-\infty}^{\infty} g(x) e^{-sx} dx$$
converges for all $s>0$. Then
$$S^-(g;-\infty, \infty)\geq S^{+}(\mathcal{L}; 0, \infty).$$
\end{lem}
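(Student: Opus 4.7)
The plan is to proceed by contradiction, exploiting the classical fact that generalized exponential polynomials have few real zeros. Suppose that $n := S^+(\mathcal{L}; 0, \infty) > m := S^-(g; -\infty, \infty)$. Then there exist points $0 < s_0 < s_1 < \dots < s_n$ such that the sequence $\mathcal{L}(s_0), \mathcal{L}(s_1), \dots, \mathcal{L}(s_n)$ has exactly $n$ sign changes, i.e.\ $\varepsilon (-1)^i \mathcal{L}(s_i) > 0$ for some $\varepsilon \in \{\pm 1\}$ and all $i$. On the other hand, $g$ has at most $m < n$ sign changes, occurring at points $x_1 < x_2 < \dots < x_m$, with $g$ of constant sign on each of the $m+1$ intervals between (and outside) them.

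First I would establish the following elementary zero-counting fact: any nontrivial generalized exponential polynomial $P(x) = \sum_{i=0}^n c_i e^{-s_i x}$ with distinct exponents $s_0 < s_1 < \dots < s_n$ has at most $n$ real zeros. This follows by induction on $n$: the function $e^{s_0 x} P(x)$ has the same zero set, so by Rolle's theorem it suffices to bound the zeros of its derivative, which is (up to a factor $e^{-(s_1-s_0)x}$) a generalized exponential polynomial with only $n$ terms and exponents $s_i - s_1$, to which the inductive hypothesis applies.

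Next I would construct a test function $P(x) = \sum_{i=0}^n c_i e^{-s_i x}$ that vanishes at $x_1, \dots, x_m$ and whose coefficients satisfy the sign pattern $\varepsilon (-1)^i c_i > 0$. The vanishing imposes $m \le n-1$ linear conditions on $n+1$ unknowns, leaving an at least two-dimensional space of solutions; inside this space, the strict total positivity of the exponential kernel (i.e.\ positivity of the Vandermonde-type determinants $\det(e^{-s_i x_j})$ for any $s_0<\dots<s_n$ and $x_0<\dots<x_n$) guarantees that one may select coefficients with the prescribed alternating sign pattern. By the zero-counting lemma, $P$ has at most $n$ real zeros total; coupled with the $m$ prescribed zeros, careful use of the remaining freedom ensures (after a possible global sign flip) that $P$ has precisely $m$ zeros, namely $x_1, \dots, x_m$, and that its sign on each interval $(x_j, x_{j+1})$ matches that of $g$, so $g(x) P(x) \ge 0$ pointwise.

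The contradiction then follows from the Laplace identity
\[
\int_{-\infty}^{\infty} g(x) P(x)\, dx \;=\; \sum_{i=0}^n c_i\, \mathcal{L}(s_i),
\]
which is valid because the integral converges by the given hypothesis on $\mathcal{L}$. The left-hand side is $\ge 0$ by construction, while the right-hand side equals $\sum_i c_i \mathcal{L}(s_i)$ where every summand has the same strictly negative sign (by the coordinated choices $\varepsilon (-1)^i c_i > 0$ and $\varepsilon (-1)^i \mathcal{L}(s_i) > 0$, giving $c_i \mathcal{L}(s_i)$ a single sign determined by a parity we may pre-fix to be negative). The main obstacle will be precisely this sign-bookkeeping: ensuring simultaneously the interpolation at the $x_j$'s, the alternation of the $c_i$'s, and the pointwise sign coordination of $P$ with $g$. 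This is the technical heart of the Karlin-type variation-diminishing argument, and is where strict total positivity of $e^{-sx}$ does essentially all the work.
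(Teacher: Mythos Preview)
The paper does not supply its own proof of this lemma; it is quoted verbatim as Lemma 7 of Baker--Montgomery, who in turn attribute the underlying idea to Karlin's theory of total positivity. So I can only assess your argument on its own terms.

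Your overall strategy---test against an exponential polynomial $P$ and exploit the sharp zero bound for such polynomials---is the right framework, but the contradiction at the end does not materialise. You arrange $\varepsilon(-1)^i c_i>0$ and $\varepsilon(-1)^i \mathcal{L}(s_i)>0$; then
\[
c_i\,\mathcal{L}(s_i)=\bigl[\varepsilon(-1)^i c_i\bigr]\cdot\bigl[\varepsilon(-1)^i \mathcal{L}(s_i)\bigr]>0
\]
for every $i$, so $\sum_i c_i\,\mathcal{L}(s_i)>0$, which is perfectly compatible with $\int gP\ge 0$. There is no ``parity one may pre-fix to be negative''. If instead you reverse the alternation of the $c_i$ to force $\sum_i c_i\,\mathcal{L}(s_i)<0$, you lose control of $\mathrm{sgn}\,P$ on the outermost intervals: as $x\to+\infty$ the dominant term is $c_0e^{-s_0x}$ and as $x\to-\infty$ it is $c_ne^{-s_nx}$, so the sign pattern of $(c_i)$ is tied to the sign of $P$ on the unbounded pieces, which is already dictated by $g$. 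You do not have independent freedom to choose both.

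There is a second, related gap: you have not shown that a $P$ with all three desired properties (vanishing at $x_1,\dots,x_m$, agreeing in sign with $g$ on every interval, and having prescribed alternating coefficients) actually exists. The $m$ interpolation conditions leave $n+1-m\ge 2$ free parameters, but strict alternation of $n+1$ coefficients imposes $n$ open conditions, so a dimension count alone is insufficient. Strict total positivity of $e^{-sx}$ is indeed the mechanism that makes the argument work, but it enters through a sharper statement than the one you invoke---one that bounds the number of zeros of $\mathcal{L}$ (hence $S^+(\mathcal{L})$) directly, rather than through a test-function pairing. You also need to address the possibility that some $\mathcal{L}(s_i)=0$, since $S^+$ (unlike $S^-$) allows zeros in the sampled sequence.
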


\begin{proof}[Proof of Theorem \ref{ThmPartialSumsPositive}]
We will use Theorem \ref{Thm.SignChangesL'L} with $M=(\log_3x)^A$. As in this result we define
$$
R= \left\lfloor \frac{\alpha \log_2 x}{\log M} \right\rfloor
$$ 
and for $R/5\leq r\leq R$ we put
$$ s_r= \frac{1}{2} + \frac{1}{M^{3r}},$$
and let $K_r=M^{3r}=1/(s_r-1/2).$ Note that $(\log x)^{2\alpha/5} \leq K_r\leq (\log x)^{3\alpha}$ for all $R/5\leq r\leq R$. We also let 
$$ Y_1:= (\log x)^{\alpha/20} \ \text{ and } \ Y_2:= (\log x)^{4\alpha}.$$
Let $\delta$ be the constant in Theorem \ref{Thm.SignChangesL'L} and define $\mathcal{H}(x)$ to be the set of fundamental discriminants $D\in \F$ such that the following conditions hold:
\begin{itemize}
    \item [1.] We have $N_R(D)>\delta R/5$, where $N_R(D)$ is defined in Theorem \ref{Thm.SignChangesL'L}. 
    \item[2.] For all $R/5\leq r\leq R$ we have 
    $$\left|\log L(s_r, \chi_D)- \frac{\log K_r}{2}\right|\leq \frac{\log K_r}{4}.$$
    \item[3.] For all $R/5\leq r\leq R$ we have
    $$ \left|\int_0^{\infty} t S_{\chi_D}(e^t) e^{-s_rt}dt- \int_{Y_1}^{Y_2} t S_{\chi_D}(e^t) e^{-s_rt}dt\right|\leq  Y_1^4.$$
\end{itemize}
Then it follows from Propositions \ref{CLTLogL} and \ref{pro.TruncationLaplaceChar} and Theorem \ref{Thm.SignChangesL'L}, that 
\begin{equation}\label{Eq.OrderHx}
|\F\setminus \mathcal{H}(x)| \ll x  \exp\left(-(\log_3 x)^A\right).
\end{equation}
Let $Z_r(D)$ be as in Theorem \ref{Thm.SignChangesL'L}. Then it follows from our conditions above together with the identity \eqref{Eq.LaplaceCharacters} that for every $D\in \mathcal{H}(x)$ and all $R/5\leq r\leq R$ we have 
$$ \int_{Y_1}^{Y_2}t S_{\chi_D}(e^t) e^{-s_rt}dt> K_r,$$
if $Z_r(D)=1$,  while if $Z_r(D)=-1$ then we have 
$$ \int_{Y_1}^{Y_2}t S_{\chi_D}(e^t) e^{-s_rt}dt<- K_r.$$
Therefore, for all $D\in \mathcal{H}(x)$ we have $$S^{-}\left(\mathcal{L}_D(s), 0, \infty\right)>\frac{\delta}{5} R,$$
where 
$$\mathcal{L}_D(s)= \int_{Y_1}^{Y_2}t S_{\chi_D}(e^t) e^{-st}dt, $$
is the Laplace transform of the function
$$
g(t)=\begin{cases} t S_{\chi_D}(e^t) &\text{ if } Y_1\leq t\leq Y_2,\\
 0 &\text{ otherwise. }\end{cases}
 $$
Appealing to Lemma \ref{lem:Karlin} completes the proof.

\end{proof}


\section{Lower bounds on the number of real zeros of Fekete polynomials and the associated Theta functions}

\subsection{Real zeros of Fekete polynomials}

The proof of Theorem \ref{MainFek} proceeds similarly to that of Theorem \ref{ThmPartialSumsPositive}, but there are additional technical difficulties in this case. Indeed, although one can think of $F_D(e^{-t})$ as being close to a character sum of length approximately $1/t$, we do not have a result like Jutila's estimate \eqref{lem.Jutila} in this case, which is uniform in $t$. Instead, if $D^{-1}<t<D^{-1/2}$ (so $F_D(e^{-t})$ behaves like a character sum of length $>\sqrt{D}$) we shall use the Poisson summation formula to relate $F_D(e^{-t})$ to a dual character sum having length approximately $Dt<\sqrt{D}$. We prove the following lemma.
\begin{lem}\label{truncatedl}
Let $D \in \F$ be a positive discriminant and $1\leq T\leq D$ be a real number. Then we have 
$$  F_D\left(\exp\left(-\frac{T}{D}\right)\right) = \frac{2\sqrt{D}}{T}\sum_{n\geq 1} \frac{\chi_D(n)}{1+4\pi^2(n/T)^2}+ O\left(\frac{D}{T}e^{-T}\right).
$$
\end{lem}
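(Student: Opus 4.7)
The plan is to first extend the partial sum defining $F_D(e^{-T/D})$ to an infinite sum, and then apply twisted Poisson summation (via the Gauss sum of $\chi_D$) to transform it into the desired dual sum.

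First I would truncate. By definition
$$F_D\!\left(e^{-T/D}\right) = \sum_{n=1}^{D-1} \chi_D(n)\, e^{-nT/D} = \sum_{n=1}^{\infty} \chi_D(n)\, e^{-nT/D} \;-\; \sum_{n\geq D} \chi_D(n)\, e^{-nT/D}.$$
The second sum is bounded trivially by a geometric series:
$$\sum_{n\geq D} e^{-nT/D} = \frac{e^{-T}}{1-e^{-T/D}} \ll \frac{D}{T}\, e^{-T},$$
using that $1 \leq T \leq D$ forces $1-e^{-T/D} \gg T/D$. This matches exactly the claimed error term.

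The main step is to evaluate the completed sum $\Sigma := \sum_{n=1}^\infty \chi_D(n) e^{-nT/D}$. Since $D>0$, the character $\chi_D$ is even, so if I let $f(x) := e^{-T|x|/D}$ then $2\Sigma = \sum_{n\in\mathbb{Z}}\chi_D(n) f(n)$ (the $n=0$ contribution vanishes). For a primitive character $\chi$ modulo $q$, the standard twisted Poisson identity (obtained by splitting into residue classes modulo $q$ and applying Poisson summation, then recognizing the resulting Gauss sum $\tau_m(\chi)=\bar\chi(m)\tau(\chi)$) gives
$$\sum_{n\in\mathbb{Z}} \chi(n) f(n) = \frac{\tau(\chi)}{q}\sum_{m\in\mathbb{Z}} \bar\chi(m)\, \hat f(m/q).$$
For $\chi_D$ with $D>0$ a fundamental discriminant one has $\bar\chi_D = \chi_D$ and $\tau(\chi_D)=\sqrt{D}$, so
$$2\Sigma = \frac{1}{\sqrt{D}}\sum_{m\in\mathbb{Z}} \chi_D(m)\, \hat f(m/D).$$
Convergence of both sides is not an issue: $f\in L^1$, and the Fourier transform decays like $\xi^{-2}$, so both series are absolutely convergent.

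Finally I would compute $\hat f$ explicitly: with $a:=T/D$,
$$\hat f(\xi) = \int_{-\infty}^{\infty} e^{-a|x|} e^{-2\pi i x\xi}\,dx = \frac{2a}{a^2+4\pi^2\xi^2},$$
so $\hat f(m/D) = \dfrac{2TD}{T^2+4\pi^2 m^2}$. Substituting, using the evenness of $\chi_D$ to fold the sum onto $m\geq 1$, and pulling out $T^2$ yields
$$\Sigma = \frac{1}{2}\cdot\frac{1}{\sqrt D}\cdot 2\cdot\frac{2TD}{1}\sum_{m\geq 1}\frac{\chi_D(m)}{T^2+4\pi^2 m^2} = \frac{2\sqrt D}{T}\sum_{m\geq 1}\frac{\chi_D(m)}{1+4\pi^2(m/T)^2},$$
which combined with the truncation error from the first step gives the claim. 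The only mildly delicate point is confirming the value $\tau(\chi_D)=\sqrt{D}$ for positive fundamental discriminants (a classical computation due to Gauss) and the validity of Poisson summation for the continuous but non-smooth function $e^{-T|x|/D}$; since both $f$ and $\hat f$ are integrable and continuous, the absolute convergence of both sums makes the identity rigorous without any regularization.
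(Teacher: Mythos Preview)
Your proof is correct and follows essentially the same route as the paper: extend the sum to all of $\mathbb{Z}$ at the cost of the claimed error, apply the twisted Poisson summation formula (the paper cites this as Theorem~10.5 of Koukoulopoulos, with $f(t)=e^{-|t|}$ and scaling $N=D/T$), use $\tau(\chi_D)=\sqrt{D}$ for positive discriminants, and plug in the Fourier transform $\widehat{e^{-|\cdot|}}(\xi)=2/(1+4\pi^2\xi^2)$. Your remark on the validity of Poisson summation for the non-smooth $e^{-a|x|}$ is well taken and matches the hypothesis ``piecewise differentiable with $\hat f(x)\ll x^{-2}$'' in the lemma the paper invokes.
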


We deduce this result from the following classical Poisson summation-type formula.
\begin{lem}\label{lem:Poisson}
\cite[Theorem $10.5$]{Koukoul}.
Let $f$ be a piecewise differentiable function such that $\hat{f}(x) \ll \frac{1}{x^2}$, where $\hat{f}$ denotes the Fourier transform of $f$. For any primitive character modulo an integer $q\geq 2$ and $N >0$, we have 
$$ \sum_{n\in \mathbb{Z}} \chi(n)f\left(\frac{n}{N}\right) = \frac{\chi(-1)N}{\tau(\overline{\chi})} \sum_{n\in \mathbb{Z}} \overline{\chi}(n)\hat{f}\left(\frac{nN}{q}\right),
$$
where $\tau(\chi)$ denotes the usual Gauss sum associated to the primitive character $\chi$.
\end{lem}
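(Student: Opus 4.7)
The plan is to reduce $F_D(e^{-T/D})$ to an infinite sum with negligible tail and then invoke the Poisson summation formula of Lemma~\ref{lem:Poisson} with the test function $f(x) = e^{-|x|}$.

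First I would bound the tail. Since $|\chi_D(n)| \le 1$, the trivial geometric estimate gives
$$\Bigl|\sum_{n \ge D} \chi_D(n)\, e^{-nT/D}\Bigr| \le \sum_{n \ge D} e^{-nT/D} = \frac{e^{-T}}{1-e^{-T/D}} \ll \frac{D}{T}\, e^{-T},$$
where the last step uses $1-e^{-u} \gg u$ for $0 \le u \le 1$, which applies since $T \le D$. Recalling that $\chi_D$ is a primitive character mod $D$ (so $\chi_D(D)=0$ and the period-$D$ extension agrees with the truncated sum up to the tail), this yields
$$F_D(e^{-T/D}) = \sum_{n=1}^{\infty} \chi_D(n)\, e^{-nT/D} + O\!\left(\frac{D}{T}\, e^{-T}\right).$$

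For the main term, I use that $D > 0$ is a positive fundamental discriminant, so $\chi_D$ is a real primitive \emph{even} character mod $D$: in particular $\chi_D(-1) = 1$, $\overline{\chi_D} = \chi_D$, and $\chi_D(0) = 0$. This gives the symmetrization
$$\sum_{n=1}^{\infty} \chi_D(n)\, e^{-nT/D} = \tfrac{1}{2}\sum_{n \in \mathbb{Z}} \chi_D(n)\, e^{-|n|T/D}.$$
I then apply Lemma~\ref{lem:Poisson} with $q = D$, $\chi = \chi_D$, $N = D/T$, and $f(x) = e^{-|x|}$. The function $f$ is continuous and piecewise $C^1$, and a direct computation shows $\hat{f}(\xi) = 2/(1+4\pi^2\xi^2)$, which satisfies the required $O(\xi^{-2})$ decay. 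Since $n/N = nT/D$ and $nN/q = n/T$, the formula reads
$$\sum_{n \in \mathbb{Z}} \chi_D(n)\, e^{-|n|T/D} = \frac{D/T}{\tau(\chi_D)} \sum_{n \in \mathbb{Z}} \chi_D(n)\, \frac{2}{1+4\pi^2(n/T)^2}.$$
Using the classical evaluation $\tau(\chi_D) = \sqrt{D}$ for positive fundamental discriminants together with the evenness of $\chi_D$ (which collapses the dual $n \in \mathbb{Z}$ sum to $2$ times the sum over $n \ge 1$), the identity simplifies to
$$\sum_{n=1}^{\infty} \chi_D(n)\, e^{-nT/D} = \frac{2\sqrt{D}}{T}\sum_{n \ge 1} \frac{\chi_D(n)}{1+4\pi^2(n/T)^2}.$$
Combining with the tail estimate completes the proof.

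There is no serious obstacle; the argument is essentially a clean application of a named result. The only points that require a bit of care are (i) verifying that $f(x) = e^{-|x|}$ meets the mild smoothness/decay hypotheses of Lemma~\ref{lem:Poisson}, (ii) correctly tracking the two independent factors of $2$ arising from symmetrizing the original sum \emph{and} the dual sum via $\chi_D(-1)=1$, and (iii) recording the correct value $\tau(\chi_D) = \sqrt{D}$ (with positive sign) when $D > 0$.
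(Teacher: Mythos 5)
There is a fundamental mismatch here: the statement under review is Lemma \ref{lem:Poisson} itself, i.e.\ the character-twisted Poisson summation formula
$$\sum_{n\in \mathbb{Z}} \chi(n)f\left(\frac{n}{N}\right) = \frac{\chi(-1)N}{\tau(\overline{\chi})} \sum_{n\in \mathbb{Z}} \overline{\chi}(n)\hat{f}\left(\frac{nN}{q}\right),$$
but what you have written is a proof of Lemma \ref{truncatedl} (the asymptotic for $F_D(e^{-T/D})$), \emph{taking Lemma \ref{lem:Poisson} as an input}. As a proof of the stated lemma this is circular: you invoke the very identity you are supposed to establish. (In the paper, Lemma \ref{lem:Poisson} is not proved at all; it is quoted from Koukoulopoulos, Theorem 10.5, and the argument you give is essentially verbatim the paper's proof of Lemma \ref{truncatedl} — the tail bound, the symmetrization using $\chi_D(-1)=1$, the choice $f(x)=e^{-|x|}$ with $\hat f(\xi)=2/(1+4\pi^2\xi^2)$, and $\tau(\chi_D)=\sqrt{D}$. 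So your derivation of \emph{that} lemma is correct, but it is not the task at hand.)

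To actually prove Lemma \ref{lem:Poisson} you need the standard two-step argument. First, since $\chi$ is primitive modulo $q$, the Gauss sum expansion $\chi(n) = \tau(\overline{\chi})^{-1}\sum_{a \bmod q} \overline{\chi}(a)\, e(an/q)$ holds for \emph{all} integers $n$ (primitivity is essential here; for imprimitive characters this fails when $(n,q)>1$). Substituting this and interchanging the sums (justified by the hypothesis $\hat f(x)\ll x^{-2}$, which gives absolute convergence of the dual sum), you reduce to the classical Poisson summation formula $\sum_{n} e(an/q) f(n/N) = N\sum_{m}\hat f\big(N(mq-a)/q\big)$ for each fixed residue $a$. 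Writing $k=mq-a$, so that $a\equiv -k \pmod q$ and $\overline{\chi}(a)=\chi(-1)\overline{\chi}(k)$, and recombining over $a$ and $m$ yields exactly the claimed identity with the factor $\chi(-1)N/\tau(\overline{\chi})$. None of this appears in your write-up, so the proof of the statement as posed is missing in its entirety.
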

\begin{proof}[Proof of Lemma \ref{truncatedl}] Since $D$ is positive, $\chi_D$  is an even character. Hence
\begin{align*}
    F_D\left(\exp\left(-\frac{T}{D}\right)\right) &= \frac{1}{2}\sum_{1 \leq \vert n\vert  \leq  D-1} \chi_D(n)\exp\Big(-\frac{\vert n\vert T}{D}\Big)\\
    &= \frac{1}{2}\sum_{n\in \mathbb{Z}} \chi_D(n)\exp\Big(-\frac{\vert n\vert T}{D}\Big)+ O\left(\frac{D}{T}e^{-T}\right), 
\end{align*}
since 
$$\sum_{n\geq D} \exp\Big(-\frac{n T}{D}\Big) \ll \frac{D}{T}e^{-T}.$$
Note that $\tau(\chi_D)=\sqrt{D}$ since $D$ is a positive discriminant. Applying Lemma \ref{lem:Poisson} with $f(t)=e^{-|t|}$ and using that $\xi \to 2/(1+(2\pi \xi)^2)$ is the Fourier transform of $f$ completes the proof.
 \end{proof}
Using Lemma \ref{truncatedl} and proceeding as in the proof of Proposition \ref{pro.TruncationLaplaceChar} we establish the following result, which allows us to deduce Theorem \ref{MainFek} from Theorem \ref{Thm.SignChangesL'L}.
\begin{pro}\label{pro.TruncationLaplaceFek}
Let $\ep>0$ be small and fixed. Let $x$ be large and $s=\frac12+\frac1K$ with 
$ (\log x)^{\ep} \le  K \leq (\log x)/(\log\log x)^3.$ Let $1\leq Y\leq \exp(K^{1/5})$ and $Z\geq  \exp\left(K(\log K)^2\right)$. Then for all but $O(x/\log Y)$ fundamental discriminants $0<D\leq x$ we have
$$ \left|\int_{0}^{\infty}\frac{F_D(e^{-t})}{1-e^{-Dt}}t^{s-1}(\log t) dt-\int_{Z^{-1}}^{Y^{-1}} \frac{F_D(e^{-t})}{1-e^{-Dt}}t^{s-1}(\log t) dt\right|\leq (\log Y)^5.$$
\end{pro}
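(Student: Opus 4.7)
The plan is to follow the strategy of Proposition \ref{pro.TruncationLaplaceChar}: I bound the second moments of the two tail integrals and then apply Chebyshev's inequality. The key starting identity, obtained by expanding the geometric series $(1-e^{-Dt})^{-1}=\sum_{k\ge 0}e^{-kDt}$ and using the $D$-periodicity of $\chi_D$ together with the vanishing of $\chi_D$ on integers non-coprime to $D$, is
\[ G_D(t):=\frac{F_D(e^{-t})}{1-e^{-Dt}} = \sum_{m\ge 1}\chi_D(m)\, e^{-mt}. \]
Defining $\mathcal{I}_1 := \sum_{D\in\F} \bigl|\int_0^{Z^{-1}} G_D(t) t^{s-1}\log t\, dt\bigr|^2$ and $\mathcal{I}_2 := \sum_{D\in\F} \bigl|\int_{Y^{-1}}^{\infty} G_D(t) t^{s-1}\log t\, dt\bigr|^2$, Chebyshev's inequality with threshold $(\log Y)^5$ reduces the desired exceptional set bound to the estimate $\mathcal{I}_1+\mathcal{I}_2 \ll x(\log Y)^9$.

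For $\mathcal{I}_2$ I split the range at $t=1$. The contribution from $t\ge 1$ is $O(x)$ using the trivial bound $|G_D(t)|\le \sum_m e^{-mt}\ll e^{-t}$. For $t\in[Y^{-1},1]$, I apply Cauchy--Schwarz with the weight $w(t)=t^{2s-1}$, for which the deterministic factor is exactly computable: $\int_{Y^{-1}}^{1} t^{-1}(\log t)^2\, dt = \tfrac{1}{3}(\log Y)^3$. For the remaining factor $\sum_{D}\int_{Y^{-1}}^{1} |G_D(t)|^2 t^{2s-1}\, dt$, I truncate the Dirichlet series $G_D(t)$ at $M\approx(\log x)/t$ and apply Lemma \ref{BabyLargeSieve} with $a_n=e^{-nt}$, together with the elementary estimate $\sum_{mn=\square,\,m,n\le M}e^{-(m+n)t}\ll t^{-1}\log(1/t)$, obtaining $\sum_D|G_D(t)|^2\ll x\log(1/t)/t$; integrating then yields $\mathcal{I}_2\ll x(\log Y)^5$.

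For $\mathcal{I}_1$ the situation is more delicate since $G_D(t)$ need not be small as $t\to 0$. I interchange summation and integration,
\[ \int_0^{Z^{-1}} G_D(t) t^{s-1}\log t\, dt = \sum_{m\ge 1}\chi_D(m)\, a_m,\qquad a_m:= \int_0^{Z^{-1}} e^{-mt} t^{s-1}\log t\, dt, \]
and estimate $|a_m|\ll Z^{-s}\log Z$ for $m\le Z$ (since $e^{-mt}\asymp 1$ on the range of integration) and $|a_m|\ll m^{-s}\log m$ for $m>Z$ (via the change of variable $u=mt$). Splitting accordingly, the ``small $m$'' piece reduces by Abel summation to $a_0\cdot S_{\chi_D}(Z)$ plus a remainder of the same order; its second moment over $D$ is bounded by Jutila's estimate \eqref{lem.Jutila}, yielding $\ll xZ^{1-2s}(\log Z)^2(\log x)^{8}$, which is tiny thanks to the crucial saving $Z^{1-2s}=Z^{-2/K}\le e^{-2(\log K)^2}$ coming from the hypothesis $\log Z\ge K(\log K)^2$. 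The ``large $m$'' piece reduces to second-moment bounds on the tails $\sum_{m>Z}\chi_D(m)/m^s$ and $\sum_{m>Z}\chi_D(m)\log m/m^s$, which I handle by partial summation combined with Jutila's estimate for $u\in[Z,x]$ and P\'olya--Vinogradov for $u>x$, again benefiting from the $Z^{-2/K}$ savings.

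The hardest step will be obtaining the mean-square control of $\sum_{m>Z}\chi_D(m)/m^s$ for $D\in\F$ uniformly, since the sum is intrinsically of infinite length and the standard large-sieve bounds degrade. I plan to circumvent this by a careful splitting at $m=x$ combined with partial summation, so that the exponential savings $e^{-(\log K)^2}$ from $Z^{-2/K}$ absorb any polynomial-in-$\log x$ losses. As an alternative route, one could invoke Lemma \ref{truncatedl} to rewrite $F_D(e^{-t})$ as a dual character sum of effective length $\approx Dt\ll D/Z$ in the tiny-$t$ regime, bringing $\mathcal{I}_1$ into the standard range of the large sieve. In either case the hypothesis $Z\ge \exp(K(\log K)^2)$ is precisely what guarantees the saving needed to close the estimate.
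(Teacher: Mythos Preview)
Your approach is correct and constitutes a genuinely different route from the paper's. The treatment of $\mathcal{I}_2$ is essentially the same in both (Cauchy--Schwarz plus the baby large sieve on a truncated exponential sum), but for $\mathcal{I}_1$ the paper proceeds quite differently: it restricts to $D\in[x/(\log x)^2,x]$, splits the $t$-range into three zones, and uses P\'olya--Vinogradov for $t<(\log D)^2/D$, the baby large sieve \eqref{Eq.BabyLargeSieve} for $x^{-1/2}(\log x)^{-4}\le t\le Z^{-1}$, and crucially Lemma~\ref{truncatedl} (Poisson summation passing to a dual sum of effective length $\ll Dt<\sqrt{x}$) for the intermediate zone where the direct sum length exceeds $\sqrt{x}$. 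Your route avoids the Poisson step and the case analysis in $t$ entirely by passing to the coefficients $a_m$ and using Jutila's mean-square bound \eqref{lem.Jutila}; this is cleaner, at the price of appealing to Jutila (which the paper does not use in this proposition). Note also that your series identity $G_D(t)=\sum_{m\ge1}\chi_D(m)e^{-mt}$ makes the argument uniform in $D$, whereas the paper must discard small $D$ first.

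One point needs care. Your stated reduction of the large-$m$ piece to the tails $\sum_{m>Z}\chi_D(m)m^{-s}$ and $\sum_{m>Z}\chi_D(m)m^{-s}\log m$ is not immediate: writing $a_m=m^{-s}(\Gamma'(s)-\Gamma(s)\log m)+r_m$ one has $|r_m|\ll m^{-s}(\log m)e^{-m/(2Z)}$, and $\sum_{m>Z}|r_m|\asymp Z^{1/2}$ is large, so you cannot bound $\sum_{m>Z}\chi_D(m)r_m$ termwise. The clean fix is to bypass this decomposition and apply Abel summation directly to $\sum_{m>Z}\chi_D(m)a_m$, using $|a_m-a_{m+1}|\ll m^{-s-1}\log m$ (from $1-e^{-t}\ll t$); this gives
\[
\Big|\sum_{m>Z}\chi_D(m)a_m\Big|\ll |S_{\chi_D}(Z)|\,Z^{-s}\log Z+\sum_{m>Z}|S_{\chi_D}(m)|\,m^{-s-1}\log m,
\]
and then Cauchy--Schwarz together with \eqref{lem.Jutila} yields a second-moment bound of the shape $xK^{O(1)}Z^{-2/K}(\log Z)^2(\log x)^8$, which is $\ll x$ since $Z^{-2/K}\le e^{-2(\log K)^2}$ absorbs all polynomial-in-$\log x$ losses (here one uses $K\ge(\log x)^{\varepsilon}$). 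The interchange $\int_0^{Z^{-1}}\sum_m=\sum_m\int_0^{Z^{-1}}$ also requires a word, since Fubini fails absolutely; it is justified by truncating at $m\le M$ and showing via P\'olya--Vinogradov that the remainder integral is $\ll\sqrt{D}(\log D)M^{-s}\log M\to 0$.
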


 \begin{proof}[Proof of Proposition \ref{pro.TruncationLaplaceFek}]
 First, observe that we can restrict our attention to positive fundamental discriminants $D$ in the range $x/(\log x)^2\leq D\leq x$, since the number of those in the range $0< D<x/(\log x)^2$ is $\ll x/(\log x)^2\ll x/\log Y $. Let $\G(x)$ be the set of such discriminants. Similarly to the proof of Proposition \ref{pro.TruncationLaplaceChar} we will bound the second moments
$$
    \mathcal{J}_1:=\sum_{D\in \G(x)} \left|\int_0^{Z^{-1}} F_D(e^{-t})(1-e^{-Dt})^{-1}t^{s-1} (\log t)dt\right|^2, 
$$
and 
$$
    \mathcal{J}_2:=\sum_{D\in \G(x)} \left|\int_{Y^{-1}}^{\infty} F_D(e^{-t})(1-e^{-Dt})^{-1}t^{s-1} (\log t)dt\right|^2. 
$$
 We start by bounding $\mathcal{J}_2$ as it is simpler. Since $F_D(e^{-t})\ll e^{-t}$ for $t\geq 1$ we see that 
$$ \int_1^{\infty}F_D(e^{-t})(1-e^{-Dt})^{-1}t^{s-1} (\log t)dt \ll \int_1^{\infty} e^{-t} (\log t) dt =O(1). $$
Therefore, we deduce that 
\begin{equation}\label{Eq.FirstEstimateJ2}
    \mathcal{J}_2  \ll \sum_{D\in \G(x)} \left(\int_{Y^{-1}}^{1} |F_D(e^{-t})t^{-1/2} \log t|dt\right)^2 +x, 
\end{equation}
since $(1-e^{-Dt})^{-1} t^{s-1/2}= 1+o(1)$ for $Y^{-1}\leq t\leq 1$.  Now, by the Cauchy-Schwarz inequality we obtain 
\begin{align*}
\left(\int_{Y^{-1}}^{1} |F_D(e^{-t})t^{-1/2} \log t|dt\right)^2 &\leq \left(\int_{Y^{-1}}^1 \frac{(\log t)^2}{t} dt\right)\left(\int_{Y^{-1}}^{1}|F_D(e^{-t})|^2 dt\right) \\
& \ll (\log Y)^3 \int_{Y^{-1}}^{1}|F_D(e^{-t})|^2 dt.
\end{align*}
Furthermore, note that for $t\in (0,1)$ we have
\begin{equation}\label{Eq.TruncationFeketeT}
    F_D(e^{-t})= \sum_{n\leq D} \chi_D(n)e^{-nt}= \sum_{n\leq 2\log(1/t)/t} \chi_D(n)e^{-nt}+O(1).
\end{equation}
Inserting these estimates in \eqref{Eq.FirstEstimateJ2} gives 
\begin{equation}\label{Eq.SecondEstimateJ2}
    \mathcal{J}_2  \ll (\log Y)^3 \int_{Y^{-1}}^{1} \sum_{D\in \G(x)}\Big|\sum_{n\leq 2\log(1/t)/t} \chi_D(n)e^{-nt}\Big|^2dt +x(\log Y)^3. 
\end{equation}
We now use the large sieve inequality \eqref{Eq.BabyLargeSieve} which gives 
\begin{align}\label{Eq.BabySieveFekete}
  \sum_{D\in \G(x)} \Big|\sum_{n\leq 2\log(1/t)/t} \chi_D(n)e^{-nt}\Big|^2 &\ll \left(x + \frac{|\log t|^3}{t^2}\right) \sum_{\substack{m, n \leq 2\log(1/t)/t\\ mn= \square}} e^{-(n+m)t} \nonumber \\
  &\ll x\frac{(\log t)^4}{t},  
\end{align}
since $|\log t|^3/t^2\leq x$ in our range of $t$ and
$$\sum_{\substack{m, n \leq 2\log(1/t)/t\\ mn= \square}} e^{-(n+m)t}\leq \sum_{\substack{m, n \leq 2\log(1/t)/t \\ mn= \square}} 1\leq \sum_{j\leq 2\log(1/t)/t} \tau(j^2)\ll \frac{(\log t)^4}{t}.$$
 Combining the estimates \eqref{Eq.SecondEstimateJ2} and \eqref{Eq.BabySieveFekete} we derive 
\begin{equation}\label{Eq.ThirdEstimateJ2}
    \J_2 \ll x(\log Y)^3 \int_{Y^{-1}}^1 \frac{(\log t)^4}{t}dt + x(\log Y)^3 \ll x(\log Y)^8.
\end{equation}

 We now turn our attention to bounding $\mathcal{J}_1$.
 Using the P\'{o}lya-Vinogradov inequality and partial summation we obtain 
\begin{align*} F_D(e^{-t}) = \sum_{1 \leq n \leq D } \chi_D(n)e^{-nt} & =  \int_{1}^{D}\left(\sum_{1 \leq n \leq u} \chi_D(n) \right)  t e^{-tu} du \\
& \ll  \sqrt{D} (\log D) (1-e^{-Dt}). 
\end{align*} Hence we get
\begin{equation}\label{Eq.RangeTiny}
  \begin{aligned}
 \int_{0}^{\frac{(\log D)^2}{D}} F_D(e^{-t})(1-e^{- D t})^{-1}t^{s-1}(\log t) dt & \ll \sqrt{D}(\log D) \int_{0}^{(\log D)^2/D} t^{s-1}|\log t| dt \\
 & \ll \frac{(\log D)^{O(1)}}{D^{1/K}} = O(1),
\end{aligned}   
\end{equation}
 for all $D\in \G(x)$.
Next in the range $t\geq (\log D)^2/D$ we have  $(1-e^{-Dt})^{-1}\ll 1$ and hence
$$ \int_{(\log D)^2/D}^{Z^{-1}} F_D(e^{-t})(1-e^{-Dt})^{-1}t^{s-1} (\log t)dt \ll (\log x) Z^{-1/K} \int_{(\log D)^2/D}^{Z^{-1}} |F_D(e^{-t})| t^{-1/2}dt.$$
Therefore, by the Cauchy-Schwarz inequality we get 
\begin{equation}\label{Eq.L2Fekete}
\begin{aligned}
&\sum_{D\in \G(x)} \left|\int_{(\log D)^2/D}^{Z^{-1}} F_D(e^{-t})(1-e^{-Dt})^{-1}t^{s-1} (\log t)dt\right|^2 \\
& \ll(\log x)^2 Z^{-2/K} \sum_{D\in \G(x)} \left(\int_{(\log D)^2/D}^{Z^{-1}}  |F_D(e^{-t})|^2 dt\right)\left(\int_{(\log D)^2/D}^{Z^{-1}} \frac{1}{t}dt\right)  \\
&\ll (\log x)^3 Z^{-2/K}
 \sum_{D\in \G(x)} \int_{(\log D)^2/D}^{Z^{-1}}  |F_D(e^{-t})|^2 dt.
\end{aligned}
\end{equation}
We first handle the range $ x^{-1/2}(\log x)^{-4}\leq t\leq Z^{-1}$. In this range we use \eqref{Eq.TruncationFeketeT} and the large sieve inequality \eqref{Eq.BabyLargeSieve}. Similarly to \eqref{Eq.BabySieveFekete} this gives 
\begin{equation}\label{Eq.TruncationFeketeT2}
\begin{aligned}
    \sum_{D\in \G(x)}|F_D(e^{-t})|^2 &\ll \sum_{D\in \G(x)} \Big|\sum_{n\leq 2\log(1/t)/t} \chi_D(n)e^{-nt}\Big|^2 +x\\
    & \ll \left(x + \frac{|\log t|^3}{t^2}\right) \sum_{\substack{m, n \leq 2\log(1/t)/t\\ mn= \square}} e^{-(n+m)t}\ll \frac{x(\log x)^{15}}{t}.
\end{aligned}   
\end{equation}
Thus, we obtain 
\begin{equation}\label{Eq.FirstIntegralFekete}
    \sum_{D\in \G(x)} \int_{x^{-1/2}(\log x)^{-4}}^{Z^{-1}}  |F_D(e^{-t})|^2 dt \ll x(\log x)^{16}.
\end{equation}
Now in the remaining range $(\log D)^2/D\leq t\leq x^{-1/2}(\log x)^{-4}$ we shall use Lemma \ref{truncatedl}, but first we make a change of variables in the integral on the right hand side of \eqref{Eq.L2Fekete}. Indeed letting $u=1/(Dt)$ we obtain 
\begin{equation}\label{Eq.PoissonIntegralFekete}
\int_{(\log D)^2/D}^{ x^{-1/2}(\log x)^{-4}} |F_D(e^{-t})|^2 dt = \frac{1}{D}\int_{x^{1/2}(\log x)^4/D}^{1/(\log D)^2} \frac{|F_D(e^{-1/(Du)})|^2}{u^2} du.
\end{equation}
Now, by Lemma \ref{truncatedl} we obtain
$$ F_D(e^{-1/(Du)})= 2u\sqrt{D} \sum_{n\geq 1} \frac{\chi_D(n)}{1+4\pi^2(nu)^2}+ O(Due^{-1/u}).$$
Inserting this estimate in  \eqref{Eq.PoissonIntegralFekete} gives 
\begin{equation}\label{Eq.PoissonIntegralFekete2}
    \begin{aligned}
    \int_{(\log D)^2/D}^{ x^{-1/2}(\log x)^{-4}} |F_D(e^{-t})|^2 dt &\ll \int_{x^{-1/2}(\log x)^4}^{2/(\log x)^2} \Big|\sum_{n\geq 1} \frac{\chi_D(n)}{1+4\pi^2(nu)^2}\Big|^2 du+ D\int_{x^{-1/2}(\log x)^4}^{2/(\log x)^2} e^{-2/u} du\\
    & \ll \int_{x^{-1/2}(\log x)^4}^{2/(\log x)^2} \Big|\sum_{n\geq 1} \frac{\chi_D(n)}{1+4\pi^2(nu)^2}\Big|^2 du+1,
\end{aligned}
\end{equation}
since $e^{-2/u}\leq e^{-(\log x)^2}$ in our range of integration. Therefore, it only remains to bound the moment 
$$ \sum_{D\in \G(x)}\Big|\sum_{n\geq 1} \frac{\chi_D(n)}{1+4\pi^2(nu)^2}\Big|^2$$
for $u\in [x^{-1/2}(\log x)^4, 2/(\log x)^2]$.
We split the sum over $n$ into three parts: $n\leq 1/u$, $1/u<n\leq 1/u^2$ and $n>1/u^2$, and use the Cauchy-Schwarz inequality to reduce the problem to that of bounding the second moment of each of these three sums. Bounding $\chi_D(n)$ trivially, we find that the contribution of the last part is 
\begin{equation}\label{Eq.ThirdPartPoissonFek}
    \sum_{D\in \G(x)} \Big|\sum_{n> 1/u^2} \frac{\chi_D(n)}{1+4\pi^2(nu)^2}\Big|^2 \ll x\Big(\sum_{n> 1/u^2} \frac{1}{(nu)^2}\Big)^2 \ll x.
\end{equation}
We now use the large sieve inequality \eqref{Eq.BabyLargeSieve} to bound the contribution of the first part. This gives 
\begin{equation}\label{Eq.FirstPartPoissonFek}\sum_{D\in \G(x)} \Big|\sum_{n\leq  1/u} \frac{\chi_D(n)}{1+4\pi^2(nu)^2}\Big|^2 \ll \left(x+ \frac{(\log(1/u)}{u^2}\right)\sum_{\substack{m, n \leq 1/u\\ mn=\square}} 1 \ll \frac{x \log(1/u)^3}{u}.
\end{equation}
Finally, to bound the contribution of the second part we use the large sieve inequality \eqref{Eq.BabyLargeSieve2} which gives in this case
\begin{equation}\label{Eq.SecondPartPoissonFek}
\begin{aligned}\sum_{D\in \G(x)} \Big|\sum_{1/u<n\leq  1/u^2} \frac{\chi_D(n)}{1+4\pi^2(nu)^2}\Big|^2 &\ll 
x\sum_{\substack{1/u< m, n \leq 1/u^2\\ mn=\square}} \frac{1}{u^4(mn)^2}+ \log (1/u) \left(\sum_{1/u<n\leq 1/u^2} \frac{n^{1/2}}{n^2u^2}\right)^2\\
& \ll \frac{x}{u^4}\sum_{j> 1/u}\frac{\tau(j^2)}{j^4} + \frac{\log(1/u)}{u^3}\\
& \ll \frac{x \log(1/u)^3}{u},
\end{aligned}
\end{equation}
in our range of $u$.
Combining the bounds \eqref{Eq.ThirdPartPoissonFek}, \eqref{Eq.FirstPartPoissonFek} and \eqref{Eq.SecondPartPoissonFek} and using the Cauchy-Schwarz inequality implies that 
$$\sum_{D\in \G(x)} \Big|\sum_{n\geq 1} \frac{\chi_D(n)}{1+4\pi^2(nu)^2}\Big|^2 \ll  \frac{x \log(1/u)^3}{u}.$$
Inserting this estimate in \eqref{Eq.PoissonIntegralFekete2} we obtain 
$$\sum_{D\in \G(x)}\int_{(\log D)^2/D}^{ x^{-1/2}(\log x)^{-4}} |F_D(e^{-t})|^2 dt \ll x(\log x)^4.$$
Finally, we combine this estimate with \eqref{Eq.RangeTiny}, \eqref{Eq.L2Fekete} and \eqref{Eq.FirstIntegralFekete} to obtain 
$$ \mathcal{J}_1\ll x.$$
Using this bound together with \eqref{Eq.ThirdEstimateJ2} and proceeding as in \eqref{Eq.DeduceTruncationChar} completes the proof. 

 \end{proof}
 
\begin{proof}
Since the proof of Theorem \ref{MainFek} is exactly the same as that of Theorem \ref{ThmPartialSumsPositive} we only indicate where the main changes occur. Indeed, we choose the same parameters $R, M, Y_1, Y_2$, as well as the same points $(s_r)_{R/5\leq r\leq R}$ as in the proof of Theorem \ref{ThmPartialSumsPositive}. We then replace condition $3$ for our set of discriminants $\mathcal{H}(x)$ by the following condition: 
$$ \left|\int_{0}^{\infty}\frac{F_D(e^{-t})}{1-e^{-Dt}}t^{s_r-1}(\log t) dt-\int_{e^{-Y_2}}^{e^{-Y_1}} \frac{F_D(e^{-t})}{1-e^{-Dt}}t^{s_r-1}(\log t) dt\right|\leq Y_1^5,$$
for all $R/5\leq r\leq R$. We also replace the identity \eqref{Eq.LaplaceCharacters} by \eqref{Eq.IdentityLaplaceFekete} and note that
$\frac{\Gamma'}{\Gamma}(s) \ll 1$ for $1/2 \leq s \leq 1$. Then, it follows from Propositions \ref{CLTLogL} and \ref{pro.TruncationLaplaceFek} and Theorem \ref{Thm.SignChangesL'L}, that for all positive discriminants $D\in \F$, except for a set of size $O\left(x\exp\left(-(\log_3 x)^A\right)\right)$
we have 
$$S^{-}\left(\widetilde{\mathcal{L}}_D(s), 0, \infty\right)>\frac{\delta}{5} R,$$
where 
$$\widetilde{\mathcal{L}_D}(s)= \int_{e^{-Y_2}}^{e^{-Y_1}} \frac{F_D(e^{-t})}{1-e^{-Dt}}t^{s-1}(\log t) dt. $$
Using the change of variables $u=-\log t$ one can see that $\widetilde{\mathcal{L}_D}$ 
is the Laplace transform of the function
$$
\tilde{g}(u)=\begin{cases} -uF_D(e^{-e^{-u}}) \big(1-e^{-De^{-u}}\big)^{-1} &\text{ if } Y_1\leq u\leq Y_2,\\
 0 &\text{ otherwise. }\end{cases}
 $$
Appealing to Lemma \ref{lem:Karlin} and noting that $e^{-e^{-u}}=1-e^{-u}+O(e^{-2u})$ for large $u$ completes the proof. 
    
\end{proof}

\subsection{Real zeros of the Theta function associated to $\chi_D$}
A standard computation using Mellin transforms implies that for $\Re(s) >0,$
\begin{equation*}\Gamma(s/2)L(s,\chi_D)= \int_{0}^{\infty} \theta\left(\frac{tD}{\pi},\chi_D\right)t^{s/2} \frac{dt}{t}.\end{equation*}
 Upon taking logarithmic derivatives we deduce
\begin{equation}\label{Mellinder} \Gamma(s/2)L(s,\chi_D) \left(\frac{L'}{L}(s,\chi_D))+\frac{1}{2}\frac{\Gamma'}{\Gamma}(s/2)\right)=\int_{0}^{\infty} \theta\left(\frac{tD}{\pi},\chi_D\right)t^{s/2-1} (\log t)dt. \end{equation}
Clearly $\frac{\Gamma'}{\Gamma}(s/2) \ll 1$ for $1/4 \leq s \leq 1$ and the proof of Theorem \ref{Main} implies that the left hand side of \eqref{Mellinder} changes sign at least $\gg \log_2 x/\log_4 x$ times, for  all discriminants $D \in \F$  except for a set of size $\ll  x\exp\left(-(\log_3 x)^A\right).$ Noting that the factor $t^{s/2-1} (\log t)$ changes sign only once in $(0,\infty)$, the result follows from Lemma \ref{lem:Karlin}.

\section{Upper bounds on the number of zeros of Fekete polynomials}\label{upperbounds}

\subsection{Real zeros of polynomials}
One of our main tools is the following consequence of Jensen's formula. The number of zeros of a polynomial $P(z)$ inside the circle $\vert z-z_0\vert =r$ centered at $z_0$ does not exceed 

\begin{equation}\label{Jensen} \left(\log \frac{\max_{\vert z-z_0\vert =R} \vert P(z)\vert}{\vert P(z_0)\vert} \right)/ \log (R/r). \end{equation} We  also record a general result obtained by Borwein, Erd\'elyi and K\'{o}s.

\begin{lem}\cite[Theorem $4.2$]{ErdBor}\label{leftzeros} There exists an absolute constant $c$ such that every polynomial $p$ of the form
$$p(z)=\sum_{k=0}^{n}a_k z^k, \hspace{2mm} \vert a_j\vert \leq 1, \vert a_0\vert = 1, a_j \in \mathbb{C}$$ has at most $\frac{c}{a}$ zeros in $(-1+a,1-a)$ whenever $a\in (0,1)$. \end{lem}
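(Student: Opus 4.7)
The plan is to deduce the bound from the Jensen-type consequence recorded at \eqref{Jensen}, exploiting crucially the hypothesis $|a_0|=1$, which gives the ready-made lower bound $|p(0)|=1$ at the origin. First I would observe that the real interval $(-1+a,1-a)$ sits inside the disk $\{|z|\le 1-a\}$, so it suffices to bound the number of (complex) zeros of $p$ there, after which one applies \eqref{Jensen} with a single well-chosen center $z_0=0$.

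Concretely, I would take $z_0=0$, $r=1-a$ and $R=1-a/2$, which reduces matters to estimating $\max_{|z|=R}|p(z)|$. The triangle inequality together with the geometric series yields
\[
\max_{|z|=R}|p(z)| \;\le\; \sum_{k=0}^n|a_k|R^k \;\le\; \frac{1}{1-R} \;=\; \frac{2}{a}.
\]
Since an elementary computation gives $\log((1-a/2)/(1-a))\ge a/2$ for $a\in(0,1)$, substituting into \eqref{Jensen} already bounds the zero count by $(2/a)\log(2/a)$, i.e.\ of order $\log(1/a)/a$. This gives the right order of magnitude up to a logarithmic factor and would already be sufficient in many applications.

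The main obstacle is to remove this spurious $\log(1/a)$ and reach the cleaner bound $c/a$ claimed in the statement. I would attempt the following refinement: rather than using one large disk centred at the origin, cover $(-1+a,1-a)$ by $\asymp 1/a$ small disks of radius $\asymp a$ centred at equally spaced real points $x_j$, and apply \eqref{Jensen} to each individually, hoping for a bound of $O(1)$ zeros per disk. The genuine difficulty is producing a lower bound for $|p(x_j)|$ at these centres, since we only directly control $|p(0)|$. To circumvent this one can invoke the subharmonicity of $\log|p|$ together with the global Jensen identity on the unit circle, $\int_0^{2\pi}\log|p(e^{i\theta})|\,d\theta/(2\pi)\ge \log|a_0|=0$, to argue that $|p|$ cannot be too small on a positive proportion of the small real disks, so that after discarding a bounded fraction each remaining disk contributes $O(1)$ zeros via Jensen. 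An alternative route, used by Borwein--Erd\'elyi--K\'os, is to prove a Markov--Bernstein type derivative inequality on a real subinterval and then conclude the zero count via Rolle's theorem.
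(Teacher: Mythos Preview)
The paper does not prove this lemma at all; it simply quotes it as Theorem~4.2 of Borwein--Erd\'elyi--K\'os and uses it as a black box. So there is no ``paper's proof'' to compare against, only the original source.

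Your Jensen argument with $z_0=0$, $r=1-a$, $R=1-a/2$ is correct and cleanly yields the bound $\frac{2}{a}\log\frac{2}{a}$, i.e.\ $O\big((1/a)\log(1/a)\big)$. It is worth noting that this weaker bound would already suffice for the single place the lemma is invoked in the paper (the end of the proof of Theorem~\ref{Thmupperbnd}), since there $a\asymp x^{-1/4+\varepsilon}$ and the extra factor $\log(1/a)=O(\log x)$ is absorbed into the claimed $x^{1/4+o(1)}$.

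However, your proposed refinements to reach the full $c/a$ are only sketches, and the first one has a genuine gap. In the covering-by-small-disks approach you correctly identify the obstacle: one needs lower bounds for $|p(x_j)|$ at many real centres $x_j$. The Jensen identity $\int_0^{2\pi}\log|p(e^{i\theta})|\,d\theta/(2\pi)\ge 0$ controls $\log|p|$ on average over the unit \emph{circle}, and subharmonicity lets you push such averages inward to disks, but neither gives pointwise lower bounds on the one-dimensional real segment $(-1+a,1-a)$; nothing prevents $|p|$ from being extremely small on the whole real interval while being large elsewhere on the circle. So this route, as written, does not close. Your second suggestion (a Markov--Bernstein type inequality) is indeed in the spirit of what Borwein--Erd\'elyi--K\'os actually do: their argument goes through a Chebyshev-type extremal inequality for polynomials with constrained coefficients, a mechanism genuinely different from Jensen's formula, and you have not carried it out here.
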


Our goal in this section is to prove Theorem \ref{Thmupperbnd}.
In order to do so, we need to show that the Fekete polynomial is often not too ``small' at several well-chosen points before applying Jensen's formula \eqref{Jensen}. As with many non-vanishing problems, our approach relies on the computation of the first two moments.

Our starting point is an observation that the Fekete polynomial evaluated at $z_{\alpha}:=\exp(-1/x^{\alpha})$ resembles a character sum of length $\approx x^{\alpha}$. 

\subsection{Covering the real line with three circles}

To this end, we shall consider, for $\varepsilon>0$, the Fekete polynomial $F_D$ evaluated at the points
$\exp(-x^{-1/4+\varepsilon}),\exp(-x^{-1/2})$ and $\exp(-x^{1/4}/D)$ (this choice will become clear from the proof of Theorem \ref{Thmupperbnd}).

Using Lemma \ref{truncatedl}, we can pass from a sum of approximate length $D/x^{1/4}$ to a dual sum of approximate length $x^{1/4}$. Indeed, for $0<D\leq x$ we obtain

\begin{equation}\label{poissondev} F_D\left(\exp\left(-\frac{x^{1/4}}{D}\right)\right)= \frac{2\sqrt{D}}{x^{1/4}} \sum_{k\geq 1} \frac{\chi_D(k)}{1+4\pi^2(kx^{-1/4})^2} + O(1).\end{equation} 
Let $\F^+$ be the set of positive fundamental discriminants $D
\leq x$ and define for $j=1,2$ the following mixed moments

$$\mathcal{S}_j:=\sum_{D\in \F^+} \left(F_D(\exp(-x^{-1/4+\varepsilon}))F_D(\exp(-x^{-1/2}))F_D(\exp(-x^{1/4}/D))\right)^j.$$ Our main result will follow from a lower bound on the first moment and an upper bound on the second moment. 

\begin{pro}\label{thirdmoment}For large $x$ we have  $\mathcal{S}_1 \gg x^{7/4-\varepsilon/2}$.    \end{pro}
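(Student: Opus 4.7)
My plan is a first-moment calculation. First I apply Lemma \ref{truncatedl} (Poisson summation) to replace the third factor, initially a character sum of length $\asymp x^{3/4}$, by its shorter dual:
\[
F_D(\exp(-x^{1/4}/D)) = \frac{2\sqrt{D}}{x^{1/4}} \sum_{n\ge 1} \frac{\chi_D(n)}{1+4\pi^2 n^2/x^{1/2}} + O\bigl((D/x^{1/4}) e^{-x^{1/4}}\bigr).
\]
Expanding also the first two factors with weights $w_1(n) = e^{-n/x^{1/4-\varepsilon}}$, $w_2(n) = e^{-n/x^{1/2}}$, and setting $\widetilde{w}_3(n) = 1/(1+4\pi^2 n^2/x^{1/2})$, swapping orders of summation yields, up to an exponentially small error,
\[
\mathcal{S}_1 = \frac{2}{x^{1/4}} \sum_{m\ge 1} d_m\, T(m), \quad d_m := \sum_{n_1 n_2 n_3 = m} w_1(n_1) w_2(n_2) \widetilde{w}_3(n_3), \quad T(m) := \sum_{D \in \F^+} \sqrt{D}\,\chi_D(m).
\]
By Lemma \ref{Orthogonality} combined with Abel summation, $T(\ell^2) = \tfrac{4}{\pi^2} x^{3/2} \prod_{p\mid \ell} \tfrac{p}{p+1} + O(x\tau(\ell))$ in the square case, while $T(m) \ll x m^{1/4}\log m$ for $m \ne \square$; I split the $m$-sum accordingly into diagonal and off-diagonal parts.

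For the diagonal main term, I lower bound by restricting to $n_i = a_i^2$ with $a_1, a_2, a_3$ squarefree and pairwise coprime, lying in $[1, x^{1/8-\varepsilon/2}] \times [1, x^{1/4}] \times [1, x^{1/8}]$. On this range all three weights are $\Theta(1)$, and since $a_1 a_2 a_3$ is squarefree the Euler factor simplifies to $\varphi(a_1 a_2 a_3)/(a_1 a_2 a_3)$. A standard sieve count over squarefree coprime triples shows that the $\varphi/\cdot$-weighted sum is $\gg x^{1/2-\varepsilon/2}$, so the main term is $\gg (8/\pi^2) x^{5/4} \cdot x^{1/2-\varepsilon/2} = x^{7/4-\varepsilon/2}$. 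The $O(x\tau(\ell))$ error contributes only $O(x^{5/4-\varepsilon/2+o(1)})$ to the diagonal and is negligible.

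The main obstacle is the off-diagonal bound, since term-by-term P\'olya-Vinogradov would give $O(x^{2-5\varepsilon/4+o(1)})$, which exceeds the main term. I plan to apply Cauchy-Schwarz on $m$, using the two estimates
\[
\sum_{m\ge 1} d_m^2 \ll x^{1-\varepsilon+o(1)} \qquad\text{and}\qquad \sum_{\substack{m \le M \\ m \ne \square}} |T(m)|^2 \ll x^2 M (\log x)^{O(1)},
\]
the first following from $\sum_m d_m^2 \le \prod_{i=1}^3 \sum_{n_i} \tau_3(n_i) w_i(n_i)^2$ and the second from applying Abel summation to Jutila's estimate \eqref{Jutila}. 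Because $\widetilde{w}_3$ decays only polynomially, $d_m$ does not vanish for $m$ large, so I truncate at $M := x^{1-\varepsilon/2}$: the truncated part is bounded by Cauchy-Schwarz as $O(x^{7/4-3\varepsilon/4+o(1)})$, while the tail $m > M$ is handled by dyadic decomposition together with the pointwise bound $d_m \ll x^{2-2\varepsilon+o(1)}/m^2$ (valid for $m > M$ since in that range $\widetilde{w}_3(m/(n_1 n_2)) \le x^{1/2}(n_1 n_2)^2/m^2$ and one optimizes $w_i(n_i)\, n_i^2$ over $n_1, n_2$), yielding a tail contribution of $O(x^{7/4-3\varepsilon/2+o(1)})$. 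Both bounds are $o(x^{7/4-\varepsilon/2})$ for $\varepsilon > 0$, completing the proof.
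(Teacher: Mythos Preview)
Your approach is essentially identical to the paper's: both apply Lemma~\ref{truncatedl} to the third factor, expand into a weighted triple convolution $\frac{2}{x^{1/4}}\sum_m d_m T(m)$, split into square and non-square $m$, lower-bound the diagonal by restricting to individually-square factorizations, and control the off-diagonal by Cauchy--Schwarz combined with Jutila's estimate~\eqref{Jutila} (with the same dyadic treatment of the tail via the pointwise bound $d_m \ll x^{2-2\varepsilon+o(1)}/m^2$). Two harmless imprecisions: the error in passing from $\mathcal{S}_1$ to $\frac{2}{x^{1/4}}\sum_m d_m T(m)$ is not exponentially small --- extending the finite Fekete sums to infinite exponential sums costs a polynomial error (the paper records $O(x^{7/4-\varepsilon}(\log x)^2)$), though this is still $o(x^{7/4-\varepsilon/2})$; and $\prod_{p\mid \ell} p/(p+1)$ equals $\ell/\sigma(\ell)$, not $\varphi(\ell)/\ell$, but since $p/(p+1) > (p-1)/p$ your lower bound via $\varphi(\ell)/\ell$ remains valid.
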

\begin{proof} 
We first truncate the summation and write
\begin{equation}\label{eq:truncation1} F_D(\exp(-1/x^{1/2})) = \sum_{m\leq x^{1/2+o(1)}} \chi_D(m)\exp(-m/x^{1/2}) + O(1/x^{50}),\end{equation} and similarly 
\begin{equation}\label{eq:truncation2} F_D(\exp(-1/x^{1/4-\varepsilon})) = \sum_{n\leq x^{1/4-\varepsilon+o(1)}} \chi_D(n)\exp(-n/x^{1/4-\varepsilon}) + O(1/x^{50}).\end{equation} Using these estimates together with  \eqref{poissondev} we obtain
 \begin{align*} \mathcal{S}_1 = \frac{2}{x^{1/4}}\sum_{\ell }c(\ell) \sum_{D\in \F^+}\chi_D(\ell)\sqrt{D}  + O(x^{7/4-\varepsilon} (\log x)^2)\end{align*}  where the coefficients $c(\ell)$ are defined by
 \begin{equation}\label{def-coeffc}
     c(\ell):= \sum_{\substack{mnk=\ell\\ m \leq x^{1/2+o(1)}, \ n\leq x^{1/4-\varepsilon+o(1)}}} \frac{e^{-nx^{\varepsilon}/x^{1/4}}e^{-m/x^{1/2}}} {1+4\pi^2(kx^{-1/4})^2}. \end{equation}
 In particular we have
\begin{equation}\label{behavior-coeffc}
  \sum_{mnk= \ell \atop m \leq x^{1/2+o(1)}, \ n\leq x^{1/4-\varepsilon+o(1)}, k\leq x^{1/4}} 1 \ll   c(\ell) \ll \sum_{mnk= \ell \atop m \leq x^{1/2+o(1)}, \ n\leq x^{1/4-\varepsilon+o(1)}} \min(1,x^{1/2}/k^2).
\end{equation}


  We split the summation into two parts 
 $$\mathcal{S}_1^{sq}:=\sum_{ \ell=\square }c(\ell) \sum_{D\in \F^+}\chi_D(\ell)\sqrt{D}  \hspace{5mm}\text{ and }\hspace{5mm}\mathcal{S}_1^{nsq}:=\sum_{ \ell \neq \square }c(\ell) \sum_{D\in \F^+}\chi_D(\ell)\sqrt{D}$$ depending on whether $\ell$ is a square or not. First, observe that if $\ell=mnk \leq x$ is a square, we have by Lemma \ref{Orthogonality} and partial summation
 \begin{align}\label{eq:partortho}
\sum_{D\in \F^+} \chi_D(nmk) \sqrt{D} & \gg x^{3/2}\prod_{p\mid mnk}\frac{p}{p+1} + O(x (mnk)^{o(1)}) \nonumber \\
& \gg x^{3/2}\left\{\frac{\varphi(m)}{m}\frac{\varphi(n)}{n}\frac{\varphi(k)}{k} \right\}. \end{align}
It follows using \eqref{behavior-coeffc} that \begin{align*}\mathcal{S}_1^{sq} & \gg x^{3/2} \sum_{m \leq x^{1/2}, n\leq x^{1/4-\varepsilon}, k\leq x^{1/4}\atop m,n,k=\square} \left(\frac{\varphi(m)}{m}\frac{\varphi(n)}{n}\frac{\varphi(k)}{k} \right) \\
& \gg x^{3/2} \sum_{ m\leq x^{1/4}, n\leq x^{1/8-\varepsilon/2}, k\leq x^{1/8}} \frac{\varphi(n^2)}{n^2}\frac{\varphi(m^2)}{m^2} \frac{\varphi(k^2)}{k^2} \\
& \gg  x^{3/2}x^{1/4}x^{1/8-\varepsilon/2}x^{1/8} = x^{2-\varepsilon/2}.
\end{align*} Mutiplying by $2x^{-1/4}$, this gives the expected main contribution to $\mathcal{S}_1$.


We now want to show that the non-square part gives a negligible contribution, that is $\mathcal{S}_1^{nsq}=o(x^{2-\varepsilon/2})$. 
By partial summation we have  \begin{equation}\label{eq:partsumm}  \sum_{D \in \F^+} \chi_D(\ell) \sqrt{D} \ll x^{1/2} \sum_{D \in \F^+} \chi_D(\ell) + \int_{1}^{x}  \left(\sum_{D\in \mathcal{F}(t)^+} \chi_D(\ell)\right) t^{-1/2}dt.\end{equation}

We split the sum over $\ell$ as $\mathcal{S}_1^{nsq} = \mathcal{S}_{1}^{<}+\mathcal{S}_1^{>}$  depending on whether $\ell \leq x^{1-\varepsilon}$ or not. In the former case, we use the trivial bound $c(\ell) \leq \sum_{\ell=mnk} 1 \ll  \ell^{o(1)}$ and the Cauchy-Schwarz inequality to get
\begin{align}\label{firstpart-sum} x^{1/2} \sum_{\ell \neq \square \atop \ell \leq x^{1-\varepsilon}} c(\ell) \left\vert \sum_{D \in \F^+} \chi_D(\ell) \right\vert & \ll  x^{1/2+o(1)} \left( \sum_{\ell \leq x^{1-\varepsilon}} 1 \right)^{1/2}\left( \sum_{\ell \neq \square \atop \ell \leq x^{1-\varepsilon}} \left\vert \sum_{D \in \F^+} \chi_D(\ell) \right\vert^2\right)^{1/2} \nonumber \\
& \ll x^{2-\varepsilon+o(1)},\end{align} where we applied a suitable version of Lemma \ref{lemmaJutila} where $\F$ is replaced by $\F^+$  in the last step. Similarly, applying the Cauchy-Schwarz inequality twice and swapping the summation over $\ell$ and the integration we get 
\begin{align}\label{Eq.SumCSTwice}
 \sum_{\ell \neq \square \atop \ell \leq x^{1-\varepsilon}} c(\ell) \left\vert \int_{1}^{x}  \left( \sum_{D\in \mathcal{F}(t)^+} \chi_D(\ell)\right) \frac{1}{\sqrt{t}}dt \right\vert & \ll x^{\frac{1-\varepsilon}{2}+o(1)}\left( \sum_{\ell \neq \square \atop \ell \leq x^{1-\varepsilon}} \left\vert \int_{1}^{x}  \left( \sum_{D\in \mathcal{F}(t)^+} \chi_D(\ell)\right) \frac{1}{\sqrt{t}}dt \right\vert^2 \right)^{1/2} \nonumber\\
 & \ll x^{\frac{1-\varepsilon}{2}+o(1)}\left(   \int_{1}^{x} \sum_{\ell \neq \square \atop \ell \leq x^{1-\varepsilon}} \left| \sum_{D\in \mathcal{F}(t)^+} \chi_D(\ell)\right|^2 dt  \right)^{1/2}. 
\end{align} 
Appealing to Lemma \ref{lemmaJutila} implies that the right hand side of the above estimate is 
\begin{equation}\label{secondpart-sum} 
\ll x^{2-\varepsilon+o(1)}.  \end{equation}
Combining \eqref{eq:partsumm}, \eqref{firstpart-sum} and \eqref{secondpart-sum} we obtain that $\mathcal{S}_1^{<} \ll x^{2-\varepsilon+o(1)},$ which is an acceptable contribution.

We now turn our attention to the case $\ell \geq x^{1-\varepsilon}.$ Remark that under the conditions $m \leq x^{1/2+o(1)}, n\leq x^{1/4-\varepsilon+o(1)}$, we have 
$$
k=\frac{\ell}{mn} \geq \frac{\ell}{x^{3/4-\varepsilon+o(1)}},
$$
and thus in this case
$$ c(\ell) \ll \sum_{mnk= \ell \atop m \leq x^{1/2+o(1)}, \ n\leq x^{1/4-\varepsilon+o(1)}} \frac{x^{1/2}}{k^2} \ll x^{2-2\varepsilon+o(1)} \ell^{-2+o(1)}$$
by \eqref{behavior-coeffc}.  Therefore we deduce that  
the contribution to $\mathcal{S}_1^{nsq}$  of the positive integers $\ell\geq x^{1-\varepsilon}$ is 
\begin{equation}\label{contribSnq} \ll x^{2-2\varepsilon+o(1)} \sum_{\substack{\ell\geq x^{1-\varepsilon} \\ \ell\neq \square}} \frac{\ell^{o(1)}}{\ell^2} \left\vert \sum_{D \in \F^+} \chi_D(\ell) \sqrt{D}\right\vert.   \end{equation} 
By \eqref{eq:partsumm}, the sum on the right hand side of \eqref{contribSnq} is bounded by $x^{2-2\varepsilon+o(1)}(\mathcal{S}_{1,1} + \mathcal{S}_{1,2})$ where (after a dyadic summation)
\begin{equation}\label{defS11} \mathcal{S}_{1,1}=  x^{1/2}\sum_{j \geq \frac{(1-\varepsilon) \log x}{\log 2}}\frac{(2^{j})^{o(1)}}{2^{2j}}\sum_{\substack{2^j\leq \ell  < 2^{j+1} \\ \ell \neq \square}} \left\vert \sum_{D \in \F^+} \chi_D(\ell) \right\vert  \end{equation} and
\begin{equation}\label{defS12} \mathcal{S}_{1,2}=  \sum_{j \geq \frac{(1-\varepsilon) \log x}{\log 2}}\frac{(2^{j})^{o(1)}}{2^{2j}}\sum_{2^j \leq \ell <2^{j+1} \atop \ell \neq \square}  \left\vert \int_{1}^{x}  \left( \sum_{D\in\mathcal{F}(t)^+} \chi_D(\ell)\right) \frac{1}{\sqrt{t}}dt \right\vert. \end{equation}
Applying the Cauchy-Schwarz inequality and combining it with Lemma \ref{lemmaJutila} we get
\begin{align*}x^{1/2}\sum_{\substack{2^j\leq \ell  < 2^{j+1} \\ \ell \neq \square}} \left\vert \sum_{D \in \F^+} \chi_D(\ell) \right\vert & \ll  x^{1/2}\left(\sum_{2^j\leq \ell  < 2^{j+1}} 1\right)^{1/2} \left(\sum_{\substack{2^j\leq \ell  < 2^{j+1} \\ \ell \neq \square}} \left\vert \sum_{D \in \F^+} \chi_D(\ell)\right\vert^2\right)^{1/2} \\
& \ll  x^{1+o(1)} 2^{j(1+o(1))}.\end{align*}
Furthermore, using the same  argument leading to \eqref{Eq.SumCSTwice} we obtain \begin{align*}
 \sum_{2^j \leq \ell <2^{j+1} \atop \ell \neq \square}  \left\vert \int_{1}^{x}  \left( \sum_{D\in\mathcal{F}(t)^+} \chi_D(\ell)\right) \frac{1}{\sqrt{t}}dt \right\vert & \ll x^{1+o(1)}2^{j(1+o(1))}. 
\end{align*}

Summing over the dyadic intervals, we finally get
\begin{equation}\label{boundS112}
      \mathcal{S}_{1,1}+ \mathcal{S}_{1,2} \ll  \sum_{j \geq \frac{(1-\varepsilon) \log x}{\log 2}} \frac{x^{1+o(1)}}{2^{j(1+o(1))}} \ll x^{\varepsilon+o(1)}. \end{equation} Thus, by \eqref{contribSnq} and \eqref{boundS112}
      we have $\mathcal{S}_1^{>} \ll x^{2-2\varepsilon+o(1)} (\mathcal{S}_{1,1} + \mathcal{S}_{1,2}) \ll x^{2-\varepsilon+o(1)}$ which is acceptable.  This concludes the proof.

\end{proof} 
 The following proposition gives an almost optimal upper bound for the second moment.
\begin{pro}\label{thirdmoment-2}
For large $x$ we have $\mathcal{S}_{2} \ll x^{5/2+o(1)}. $
\end{pro}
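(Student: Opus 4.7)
The approach is to truncate the three Fekete polynomial evaluations (as in the proof of Proposition \ref{thirdmoment}), reduce the problem to a second moment involving the dual sum $T_D$, and estimate it via a dyadic decomposition in the summation variable of $T_D$ combined with Heath--Brown's large sieve (Lemma \ref{HB-largesieve}). First, I would apply \eqref{eq:truncation1}, \eqref{eq:truncation2} to replace $F_D(\exp(-x^{-1/2}))$ and $F_D(\exp(-x^{-1/4+\varepsilon}))$ by character sums $F_{D,2}$ and $F_{D,1}$ of lengths $x^{1/2+o(1)}$ and $x^{1/4-\varepsilon+o(1)}$ respectively, and invoke Lemma \ref{truncatedl} to write
\[
F_D(\exp(-x^{1/4}/D))=\frac{2\sqrt{D}}{x^{1/4}}\,T_D+O\!\left(\tfrac{D}{x^{1/4}}e^{-x^{1/4}}\right),\qquad T_D:=\sum_{k\geq 1}\frac{\chi_D(k)}{1+4\pi^2(k/x^{1/4})^2}.
\]
Substituting these approximations and bounding $D\leq x$ trivially gives
\[
\mathcal{S}_{2}\ll x^{1/2}\sum_{D\in \F^{+}}\bigl(F_{D,1}\,F_{D,2}\,T_D\bigr)^{2} + (\text{negligible error}).
\]

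Next, I would perform a dyadic decomposition $T_D=\sum_{j\geq 0}T_D^{(j)}$, where $T_D^{(j)}:=\sum_{2^j\leq k<2^{j+1}}\chi_D(k)(1+4\pi^2(k/x^{1/4})^2)^{-1}$ has coefficients of magnitude $\asymp \min(1,\, x^{1/2}/4^{j})$. For each $j$, the product $F_{D,1}F_{D,2}T_D^{(j)}$ is a character sum of length $L_j := 2^j\cdot x^{3/4-\varepsilon+o(1)}$ with coefficients bounded by $\tau_{3}(n)\,\min(1,\, x^{1/2}/4^j)\,n^{o(1)}$. Heath--Brown's large sieve combined with the standard divisor estimate $\sum_{m_1 m_2=\square,\, m_i\leq M}\tau_{3}(m_1)\tau_{3}(m_2)\ll M^{1+o(1)}$ then yields
\[
\sum_{D\in \F^{+}}\bigl(F_{D,1}F_{D,2}T_D^{(j)}\bigr)^{2}\ll (x+L_j)\cdot \min(1,\, x/16^{j})\cdot L_j^{1+o(1)}.
\]

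A case analysis according to the three regimes $2^j\leq x^{1/4}$, $x^{1/4}<2^j\leq x^{1/4+\varepsilon}$, and $2^j>x^{1/4+\varepsilon}$ shows that the right-hand side can be expressed as $x^{2-\varepsilon+o(1)}$ times a summable geometric factor in each regime, so that summing over $j$ gives a total contribution $\ll x^{2-\varepsilon+o(1)}$. The cross terms arising from the expansion $T_D^{2}=\sum_{j,j'}T_D^{(j)}T_D^{(j')}$ are handled by Cauchy--Schwarz applied to each pair $(j,j')$, losing only an $O((\log x)^{2})$ factor. Combining everything yields $\sum_{D\in \F^{+}}(F_{D,1}F_{D,2}T_D)^{2}\ll x^{2-\varepsilon+o(1)}$, and hence $\mathcal{S}_{2}\ll x^{5/2-\varepsilon+o(1)}$, which suffices.

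The main technical obstacle is the slow (polynomial) decay of the tail of $T_D$: truncating at a single scale would lose polynomial factors, and indeed the naive approach of using the trivial bound $T_D\ll x^{1/4+o(1)}$ pointwise leads to the weaker estimate $\mathcal{S}_{2}\ll x^{11/4-\varepsilon+o(1)}$. The dyadic decomposition overcomes this by exploiting, in each range $2^{j}\leq k<2^{j+1}$, the gain $x^{1/2}/4^{j}$ in the coefficients which exactly balances the growth of $L_j$, so that Heath--Brown's large sieve delivers a uniform bound $x^{2-\varepsilon+o(1)}$ across all scales. A secondary technicality is the verification that the convolution coefficients $c(n)=\sum_{n=n_1 m_1 k}w_1(n_1)w_2(m_1)w_3(k)$ of the merged character sum are bounded by a fixed-order divisor function, which is standard.
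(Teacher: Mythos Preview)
Your proposal is correct and follows essentially the same route as the paper: truncate the two short Fekete evaluations, apply the Poisson-type identity (Lemma~\ref{truncatedl}) to the long one, decompose the resulting dual sum dyadically in $k$, and bound each piece with Heath--Brown's large sieve (Lemma~\ref{HB-largesieve}). The only cosmetic differences are that the paper groups all $k\leq x^{1/4}$ into a single block $S_2^1$ (rather than your full dyadic split from $j=0$) and presents the remaining ranges as one family $S_2^i$ rather than two regimes; your slightly sharper bookkeeping yields $x^{5/2-\varepsilon+o(1)}$ in place of the paper's $x^{5/2+o(1)}$, but this is immaterial for the application.
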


\begin{proof}
First, we split the sum from \eqref{poissondev} using dyadic summation and write 
\begin{align*} &F_D(\exp(-x^{1/4}/D))\\
&= \frac{2 \sqrt{D}}{x^{1/4}} \left( \sum_{ 1\leq k\leq x^{1/4}} \chi_D(k) h(k) + \sum_{i \atop  x^{1/4} < 2^i \leq x^{10}}\sum_{ 2^i < k \leq 2^{i+1}} \chi_D(k) h(k) + O\left(x^{1/2}\sum_{k \geq x^{10}} \frac{1}{k^2}\right) \right)  \\
& = \frac{2 \sqrt{D}}{x^{1/4}} \left( \sum_{ 1\leq k\leq x^{1/4}} \chi_D(k) h(k) + \sum_{i \atop  x^{1/4} <2^i \leq x^{10}}\sum_{ 2^i < k \leq 2^{i+1}} \chi_D(k) h(k) \right) + O(1/x^9),
\end{align*} where $h$ is a function such that $h(k) \ll \min(1,x^{1/2}/k^2).$ By the Cauchy-Schwarz inequality (applied twice) we have 
\begin{align}\label{eq:dyadic} &\left( \sum_{ 1\leq k\leq x^{1/4}} \chi_D(k) h(k) + \sum_{i \atop  x^{1/4} < 2^i \leq x^{10}}\sum_{ 2^i < k \leq 2^{i+1}} \chi_D(k) h(k)\right)^2 \nonumber \\ & \ll  \left( \sum_{ 1\leq k\leq x^{1/4}} \chi_D(k) h(k)\right)^2   + x^{o(1)} \sum_{i \atop  x^{1/4} < 2^i \leq x^{10}} \left(\sum_{ 2^i < k \leq 2^{i+1}} \chi_D(k) h(k)\right)^2.\end{align}
Hence, it is sufficient to bound separately the contribution to $\mathcal{S}_2$ of each of these sums. 
 Expanding the square, using \eqref{eq:truncation1}, \eqref{eq:truncation2} and the Cauchy-Schwarz inequality,  we obtain  \begin{align*}  S_2^1:&=\sum_{D \in \F^+}  \left(F_D(\exp(-1/x^{1/2}))  F_D(\exp(-1/x^{1/4-\varepsilon}))\right)^2\left(\frac{2 \sqrt{D}}{x^{1/4}}\sum_{ 1\leq k\leq x^{1/4}} \chi_D(k) h(k)\right)^2 \\
 &\ll x^{1/2}\sum_{D \in \F^+} \left(\sum_{\ell }\chi_D(\ell) a(\ell) \right)^2 \ +  1
 \end{align*} where the coefficients $a(\ell)$ satisfy the bound
\begin{equation}\label{bound_a}
     a(\ell) \ll \sum_{mnk=\ell \atop m\leq x^{1/2+o(1)}, n\leq x^{1/4+o(1)},k \leq x^{1/4}} 1 .\end{equation}
Note that the divisor bound implies $a(\ell) \ll \ell^{o(1)}.$ Hence, by Lemma \ref{HB-largesieve} we obtain 
\begin{align}\label{boundS21}
    S_2^1  &\ll x^{3/2+o(1)} \sum_{\ell_1,\ell_2 \leq x^{1+o(1)} \atop \ell_1 \ell_2 = \square} a(\ell_1)a(\ell_2) \ +1 \ll x^{3/2+o(1)} \sum_{j \leq x^{1+o(1)}} \tau(j^2) \nonumber \\
    & \ll x^{5/2+o(1)}.\end{align}
    Similarly, we have 
 \begin{align*} S_2^i & := \sum_{D \in \F^+}  \left(F_D(\exp(-1/x^{1/2}))  F_D(\exp(-1/x^{1/4-\varepsilon})\right)^2\frac{D}{x^{1/2}}\left(\sum_{ 2^i \leq k <2^{i+1}} \chi_D(k) h(k)\right)^2 \\
&   \ll x^{1/2}\sum_{D \in \F^+} \left(\sum_{\ell }\chi_D(\ell) a_i(\ell) \right)^2 +1
 \end{align*} where the coefficients $a_i$ satisfy the bound
 $$ a_i(\ell) \ll x^{1/2} \sum_{mnk=\ell \atop m\leq x^{1/2+o(1)}, n\leq x^{1/4+o(1)},2^i <k \leq 2^{i+1}} 1/k^2 . $$ Again, by the divisor bound we obtain $a_i(\ell) \ll \frac{1}{2^{2i}} \ell^{o(1)}.$ Another application  of Lemma \ref{HB-largesieve} gives 

\begin{align}\label{boundS2i}
    S_2^i  &\ll x^{1/2+o(1)}(x+2^i x^{3/4+o(1)}) \sum_{\ell_1,\ell_2 \leq 2^i x^{3/4+o(1)} \atop \ell_1 \ell_2 = \square} a_i(\ell_1)a_i(\ell_2) \ +1\nonumber \\
    & \ll x^{3/2+o(1)} (x+2^i x^{3/4+o(1)}) 2^{-4i}\sum_{j \leq 2^ix^{3/4+o(1)}} \tau(j^2) \nonumber\\
    & \ll x^{3/2+o(1)} (x+2^i x^{3/4+o(1)}) 2^{-4i} (2^i x^{3/4+o(1)}) =  x^{13/4+o(1)}2^{-3i} + x^{3+o(1)}2^{-2i}.\end{align}
Using \eqref{eq:dyadic}, we now sum over the dyadic intervals incorporating the bounds \eqref{boundS21} and \eqref{boundS2i} to arrive at
\begin{align*}\mathcal{S}_2 &\ll S_2^1 +  \sum_{\substack{i\\ x^{1/4} <2^i \leq x^{10}}} S_2^i \ +1 \\
&\ll x^{5/2+o(1)} + x^{13/4+o(1)}  \sum_{\substack{i\\ x^{1/4} <2^i \leq x^{10}}} 2^{-3i} + x^{3+o(1)} \sum_{\substack{i\\ x^{1/4} <2^i \leq x^{10}}} 2^{-2i}  \\
&\ll x^{5/2+o(1)}.
\end{align*}
This concludes the proof.
    
\end{proof} We immediately deduce the following.
\begin{Cor}\label{cor3circ}
There exists at least $\gg x^{1-o(1)}$ fundamental discriminants $D\in\F^+$ such that
$$\min\{\left|F_D(\exp(-x^{-1/4+\varepsilon}))\right|,\left|F_D(\exp(-x^{-1/2}))\right|,\left|F_D(\exp(-x^{1/4}/D))\right|\} \gg x^{-100}.$$
  \end{Cor}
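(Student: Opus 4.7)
The plan is a standard Paley--Zygmund style deduction from the two moment estimates just proved. Write
$$G_D := F_D(\exp(-x^{-1/4+\varepsilon}))\,F_D(\exp(-x^{-1/2}))\,F_D(\exp(-x^{1/4}/D)),$$
so that $\mathcal{S}_1 = \sum_{D\in\F^+} G_D$ and $\mathcal{S}_2 = \sum_{D\in\F^+} G_D^2$. Denote by $\mathcal{B}$ the set of $D \in \F^+$ for which the minimum appearing in the corollary is smaller than $x^{-100}$; the goal is to show that $|\F^+\setminus \mathcal{B}| \gg x^{1-o(1)}$.

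For every $D\in\mathcal{B}$, at least one factor of $G_D$ has absolute value smaller than $x^{-100}$, while the two remaining factors are bounded trivially via
$$|F_D(y)| \leq \sum_{n=1}^{D-1} y^n \leq D \leq x \qquad (0<y<1).$$
Consequently $|G_D| \leq x^{-98}$ for $D\in\mathcal{B}$, and since $|\F^+| \ll x$ by \eqref{AsympF}, we obtain
$$\Big|\sum_{D\in\mathcal{B}} G_D\Big| \ll x \cdot x^{-98}=x^{-97},$$
which is negligible compared with the lower bound $\mathcal{S}_1 \gg x^{7/4-\varepsilon/2}$ furnished by Proposition \ref{thirdmoment}. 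Hence $\sum_{D\in \F^+\setminus \mathcal{B}} G_D \gg x^{7/4-\varepsilon/2}$.

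Applying the Cauchy--Schwarz inequality together with Proposition \ref{thirdmoment-2} then yields
$$ x^{7/2-\varepsilon} \ll \Big(\sum_{D\in \F^+\setminus \mathcal{B}} G_D\Big)^{\!2} \leq |\F^+\setminus\mathcal{B}|\cdot \sum_{D\in \F^+} G_D^2 \ll |\F^+\setminus\mathcal{B}|\cdot x^{5/2+o(1)},$$
so that $|\F^+\setminus\mathcal{B}| \gg x^{1-\varepsilon-o(1)}$. Since $\varepsilon>0$ may be taken arbitrarily small, this gives the claimed bound $\gg x^{1-o(1)}$. There is no substantive obstacle to this argument: it is a one-line Cauchy--Schwarz deduction once the two moments are in hand, and the only point requiring a moment of thought is verifying that the bad set $\mathcal{B}$ contributes negligibly to the first moment, which the trivial bound $|F_D(y)|\leq D$ settles with enormous room to spare.
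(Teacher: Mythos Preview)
Your proof is correct and follows essentially the same argument as the paper: after observing that the bad set contributes negligibly to $\mathcal{S}_1$ (the paper phrases this as $\mathcal{S}_1^*=\mathcal{S}_1(1+o(1))$), one applies Cauchy--Schwarz with the second-moment bound to conclude. Your version is slightly more explicit about the trivial bound $|F_D(y)|\leq D$ used to control the bad set, but the approach is identical.
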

\begin{proof}
Let us fix some $\varepsilon>0$. We define $\mathcal{L}(x)$ to be the set of fundamental discriminants $D\in\F^+$ such that $$\min\{\left|F_D(\exp(-x^{-1/4+\varepsilon}))\right|,\left|F_D(\exp(-x^{-1/2}))\right|,\left|F_D(\exp(-x^{1/4}/D))\right|\} \gg x^{-100}$$ and $\mathcal{S}_1^{*}$ be the sum $\mathcal{S}_1$  restricted to discriminants $D$ in $\mathcal{L}(x).$
Note that $\mathcal{S}_1^{*}= \mathcal{S}_1 (1+o(1))$ by Proposition \ref{thirdmoment}.
Applying the Cauchy-Schwarz inequality we get $$\mathcal{S}_1^{*} \ll \# \mathcal{L}(x)^{1/2} \mathcal{S}_2^{1/2}.$$ The conclusion now follows from Propositions \ref{thirdmoment} and \ref{thirdmoment-2}.
\end{proof}
We conclude this section by proving Theorem \ref{Thmupperbnd}.

\begin{proof}[Proof of Theorem \ref{Thmupperbnd}]
Let $\varepsilon>0$ and define $z_{1}=\exp(-x^{-1/4+\varepsilon})$, $z_2= \exp(-x^{-1/2})$ and $z_3= \exp(-x^{1/4}/D).$
 By Corollary \ref{cor3circ}, there exists at least $\gg x^{1-o(1)}$ fundamental discriminants $D\in\F^+$ such that 
\begin{equation}\label{mini} \min\{|F_D(z_1)|,|F_D(z_2)|,|F_D(z_3)|\} \gg x^{-100}.\end{equation}  Let $D$ be one such discriminant. We wish to show that $F_D$ has at most $O(x^{1/4+o(1)})$ real zeros.\\ 
We consider two concentric circles $C_{r_{1}},C_{R_{1}}$ centered at $z_{1}$ and of radii
 $$r_{1}=z_2-z_1 \,\,\,\textrm{ and } \,\,\, R_1=1-z_1.$$ 
Clearly we have $$\max_{z\in C_{R_{1}}} \vert F_D(z)\vert \leq \sum_{n\leq \vert D\vert} 1 \leq x$$ and 
$\log (R_{1}/r_{1}) \gg x^{-1/4-\varepsilon}$. By Jensen's formula \eqref{Jensen} and \eqref{mini}, the number of zeros inside $C_{r_{1}}$ is $O(x^{1/4+\varepsilon+o(1)})$. We further define two concentric circles $C_{r_{2}},C_{R_{2}}$ centered at $z_{2}$ and of radii $$r_{2}=z_3-z_2 \,\,\,\textrm{ and } \,\,\, R_2=1-z_2.$$ 
Similarly, we have $\max_{z\in C_{R_{2}}} \vert F_D(z)\vert \leq x$ and  $\log(R_{2}/r_{2}) \gg x^{-1/4}$. By \eqref{Jensen} and \eqref{mini}, we deduce that the number of real zeros inside $C_{r_{2}}$ of  $F_D$ is $O(x^{1/4+o(1)})$. Finally, we define the two circles $C_{r_{3}},C_{R_{3}}$ centered at  $z_{3}$ and of radii $$r_{3}=1-z_3 \,\,\,\textrm{ and } \,\,\, R_3=2r_3.$$ 
We have the bound $$\max_{z\in C_{R_{3}}} \vert F_D(z)\vert \ll \sum_{n=1}^{\vert D\vert} \left(1+\frac{1}{x^{3/4}}\right)^n \ll \exp(cx^{1/4}),$$ for some constant $c>0$. Similarly as above, this implies that the number of real zeros inside $C_{r_{3}}$ of $F_D$ is $O(x^{1/4+o(1)})$. Note that we have $$\left[z_{1},1\right] \subset C_{r_{1}}\cup C_{r_{2}}\cup C_{r_{3}}.$$ Moreover, Lemma~\ref{leftzeros} asserts that the number of zeros of $F_D$ in $(0,z_{1})$ is bounded by $O(1/(1-z_{1}))= O(x^{1/4})$. Combining all of the above and noting that the parameter $\varepsilon>0$ can be taken arbitrary small, the conclusion follows.\end{proof}

\section{Constructing Fekete polynomials with no zeros in $\left(0, 1-(\log x)^{-\sqrt{e}+\varepsilon}\right)$}
 In this section, we investigate the non-vanishing of Fekete polynomials in some subintervals of $(0,1)$ and prove Theorem \ref{NoZerosThm}. To this end, we shall only consider polynomials associated to fundamental discriminants $D$ with the nice property that $\chi_D(n)=1$ for all the ``small'' positive integers $n$. Indeed, we have the following lemma.

\begin{lem}\label{NoZerosLem}
Let $\varepsilon>0$ be a fixed small number. Let $x$ be large and $D\in \F$ be such that $\chi_D(n)=1$ for all $n\leq y$, where $y\to \infty$ as $x\to \infty$.  Then $F_D(z)$ does not have zeros in the interval $\left(0, 1-y^{-\sqrt{e}+\varepsilon}\right)$.
\end{lem}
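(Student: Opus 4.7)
The plan exploits two facts: complete multiplicativity of $\chi_D$, together with $\chi_D(p)=1$ for all primes $p\le y$, forces $\chi_D(n)=1$ on every $y$-smooth positive integer $n$; and the Dickman function satisfies $\rho(\sqrt{e})=1-\tfrac{1}{2}=\tfrac{1}{2}$, which is precisely the threshold dictating the exponent in the statement. Concretely, decompose
\[
F_D(z)=P(z)+Q(z),\qquad
P(z):=\sum_{\substack{1\le n\le |D|-1\\ P^{+}(n)\le y}} z^{n},\qquad
Q(z):=\sum_{\substack{1\le n\le |D|-1\\ P^{+}(n)>y}} \chi_D(n)z^{n},
\]
where $P^+(n)$ denotes the largest prime factor of $n$. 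For $z\in(0,1)$ every summand of $P(z)$ is positive and $|Q(z)|\le R(z)-P(z)$, with $R(z):=\sum_{n=1}^{|D|-1}z^{n}$; hence $F_D(z)\ge 2P(z)-R(z)$, and the lemma reduces to the ``majority is smooth'' inequality $P(z)/R(z)>1/2$ throughout $(0,1-y^{-\sqrt{e}+\varepsilon})$.

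Set $z=e^{-t}$, so $t>-\log(1-y^{-\sqrt{e}+\varepsilon})>y^{-\sqrt{e}+\varepsilon}$ throughout the range, and put $u_{0}:=\log(1/t)/\log y\le\sqrt{e}-\varepsilon$. Applying Abel summation to both $P$ and $R$ and invoking the Hildebrand--Tenenbaum asymptotic $\Psi(n,y)=n\rho(\log n/\log y)(1+o_{y\to\infty}(1))$, uniform while $\log n/\log y$ stays bounded, yields
\[
\frac{P(z)}{R(z)}=\ex\!\left[\rho\!\Bigl(\tfrac{\log N}{\log y}\Bigr)\right]+o(1),
\]
where $N$ is the integer-valued random variable with $\pr(N=n)\propto nz^{n}$. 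A direct moment computation gives $\ex[N]=(1+z)/(1-z)\asymp 1/t$ and $\mathrm{Var}(N)\asymp 1/t^{2}$, so by Chebyshev the mass of $N$ outside $[1/(Mt),M/t]$ is $O(M^{-2})$, while on the complementary event $\log N/\log y=u_{0}+O(\log M/\log y)$. Monotonicity and continuity of $\rho$ together with $\rho(\sqrt{e})=1/2$ then produce $\rho(u_{0})\ge\rho(\sqrt{e}-\varepsilon)=\tfrac{1}{2}+\delta$ for a fixed $\delta=\delta(\varepsilon)>0$; taking $M$ large and $y$ sufficiently large gives $\ex[\rho(\log N/\log y)]\ge\tfrac{1}{2}+\delta/2$, hence $P(z)/R(z)>1/2$ and $F_D(z)>0$.

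The main technical obstacle is making the Hildebrand--Tenenbaum asymptotic and the concentration of $N$ uniform in $z$ across the entire interval, so that the aggregate $o(1)$ truly beats the fixed gap $\delta(\varepsilon)$; the boundary contributions $\Psi(|D|-1,y)z^{|D|-1}$ and $(|D|-1)z^{|D|-1}$ are negligible in the regime $|D|t\to\infty$ that arises in the intended application of the lemma to Theorem~\ref{NoZerosThm}, where $y=(\log x)^{O(1)}$ and $|D|\asymp x$. For $z$ bounded away from $1$ the inequality is essentially free, since then $u_{0}\to 0$ and $\rho(u_{0})\to 1$ as $y\to\infty$, so only $z$ near the right endpoint $1-y^{-\sqrt{e}+\varepsilon}$ truly exercises the Dickman threshold.
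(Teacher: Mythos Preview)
Your approach is correct and rests on exactly the same arithmetic fact as the paper's proof: since $\rho(\sqrt{e})=\tfrac12$, the $y$-smooth integers form a strict majority below $y^{\sqrt{e}-\varepsilon}$, and your inequality $F_D(z)\ge 2P(z)-R(z)$ is the $z^n$-weighted analogue of the paper's pointwise bound $\sum_{n\le t}\chi_D(n)\ge 2\Psi(t,y)-t$. The paper, however, organizes the argument more elementarily. It first proves (via Vinogradov's trick, i.e.\ just Mertens' theorem) that $\sum_{n\le t}\chi_D(n)\gg_\varepsilon t$ for all $t\le y^{\sqrt{e}-\varepsilon/2}$; then it truncates $F_D(z)$ at a height $k\asymp 1/|\log z|$, bounds the tail $\sum_{n>k}\chi_D(n)z^n$ trivially by $z^{k+1}/(1-z)$, and handles $\sum_{n\le k}\chi_D(n)z^n$ by partial summation against the linear lower bound just obtained. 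This avoids invoking Hildebrand--Tenenbaum (and indeed you only ever need the range $u<2$, where the smooth-number estimate is nothing more than Mertens), and it removes the need for your probabilistic concentration step entirely. It also sidesteps the boundary issue you flagged: the truncation at $k\ll y^{\sqrt{e}-\varepsilon}$ means one never looks beyond that point, so no hypothesis on $|D|t$ is required. Your route is perfectly viable and arguably more conceptual, but to make it fully rigorous uniformly over the whole interval you would still need a short case split (for $|D|\le y^{\sqrt{e}-\varepsilon/2}$ the inequality $\Psi(u,y)>u/2$ holds for every $u\le |D|-1$ and $P/R>1/2$ is immediate; for larger $|D|$ your concentration argument goes through since then $|D|t\ge y^{\varepsilon/2}\to\infty$). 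The paper's version is shorter and entirely self-contained.
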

\begin{proof}
We first use Vinogradov's trick to show that 
\begin{equation}\label{VTrick}
\sum_{n\leq t} \chi_D(n)\gg \varepsilon t
\end{equation}
for all $t\leq y^{\sqrt{e}-\varepsilon/2}.$
The proof is standard but we include it for the sake of completeness. Note that we might assume that $y\leq t \leq y^{\sqrt{e}-\varepsilon/2}$, since the estimate is trivial otherwise.
Let $\Psi(t, y)$ denote the number of $y$-smooth (or $y$-friable) integers up to $t$. Since $y\leq t<y^2$ we obtain
$$ \sum_{n\leq t} \chi_D(n) \geq \Psi(t, y) -\sum_{\substack{ n\leq t \\ \exists p \mid n, \  p>y}} 1= \lfloor t\rfloor - 2\sum_{y<p \leq t} \left \lfloor \frac{t}{p} \right \rfloor.$$
Using Mertens' theorem the right hand side equals
$$ t \left(1-2\sum_{y<p \leq t} \frac{1}{p}\right) +O\left(\frac{t}{\log t}\right)=
t \left(1-2\log \left(\frac{\log t}{\log y} \right)\right) +O\left(\frac{t}{\log y}\right)\gg \varepsilon t,$$
which establishes \eqref{VTrick}. 

Next, note that for any positive integer $k\leq |D|-1$ and any real number $z\in (0, 1)$ we have
\begin{equation}\label{TruncFek}
F_D(z)\geq \sum_{n=1}^k \chi_D(n) z^n - \sum_{n=k+1}^{\vert D\vert-1} z^n\geq \sum_{n=1}^k \chi_D(n) z^n  - \frac{z^{k+1}}{1-z}.
\end{equation}
Let us first suppose that $0<z < 1-1/y$. We choose $k=\lfloor y\rfloor$ in this case to get 
$$ F_D(z)\geq  \sum_{n=1}^k z^n- \frac{z^{k+1}}{1-z}
= z\frac{1-2 z^k}{1-z} >0,$$
since
$ z^{k}\leq (1-1/y)^y <1/2, $
if $y$ is large enough.

We now consider the case where $1-y^{-1}\leq z\leq 1-y^{-\sqrt{e}+\varepsilon}$, and choose $k=\lfloor -A/ \log z \rfloor+1$ for some suitably large constant $A>0$. Hence we have $k \ll y^{\sqrt{e}-\varepsilon} \leq y^{\sqrt{e}-\varepsilon/2}$, if $y$ is large enough. Therefore, by partial summation and \eqref{VTrick} we obtain
\begin{equation}\label{PartialFek}
\begin{aligned}
 \sum_{n=1}^k \chi_D(n) z^n
 & = z^{k} \sum_{n\leq k} \chi_D(n) - \log z \int_{1}^k z^t \left(\sum_{n\leq t} \chi(n)\right) dt\\
 & \geq c_0\left(\varepsilon k z^k - \varepsilon   \int_{1}^k  t \big(z^t \log z\big)  dt\right)  \\
 & = c_0\varepsilon \int_{1}^k  z^t  dt = c_0\varepsilon\frac{z^{k}- z}{\log z},
 \end{aligned}
 \end{equation}
 for some constant $0<c_0<1$.
 Writing $h=1-z$ and using that $-\log (1-h) <2h$ if $0<h<1/2$  we deduce from \eqref{TruncFek} and \eqref{PartialFek} that 
 $$ F_D(z) \geq \frac{c_0\varepsilon}{2} \frac{z-z^k}{1-z} - \frac{z^{k+1}}{1-z} \geq z\frac{c_0\varepsilon/2-2e^{-A}}{1-z} >0,
 $$
 if $A$ is suitably large. This completes the proof.

\end{proof}
To complete the proof of Theorem \ref{NoZerosThm} we construct ``many'' fundamental discriminants $0<D\leq x$ such that $\chi_D(n)=1$ for all $n\leq \log x$. 
\begin{lem}\label{DiscriminantsChiD1} Let $x$ be large, and $2\leq y\leq (\log x)^2$ be a real number. The number of fundamental discriminants $0<D\leq x$ such that $\chi_D(n)=1$ for all $n\leq y$ is at least
$$ \left(\frac{1}{4}+o(1)\right)\frac{x}{2^{\pi(y)}(\log x)^2}.$$
\end{lem}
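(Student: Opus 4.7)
The plan is a standard indicator-expansion combined with a sieve and character-sum estimates. First, I would reduce the problem: since $\chi_D$ is completely multiplicative, and $\chi_D(n)=1$ requires $p \nmid D$ for every $p \mid n$, the condition $\chi_D(n)=1$ for all $n \le y$ is equivalent to $\chi_D(p)=1$ for every prime $p \le y$. For $p=2$ this forces $D$ to be odd squarefree with $D \equiv 1 \pmod 8$; for odd $p \le y$ it demands that $D$ be a nonzero quadratic residue modulo $p$. Set $P' := \prod_{3 \le p \le y} p$ and let $\mathcal{G}$ be the set of positive squarefree $D \le x$ with $D \equiv 1 \pmod 8$ and $\gcd(D,P')=1$; each $D \in \mathcal{G}$ is automatically a positive fundamental discriminant.

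Next, using $\mathbf{1}_{\chi_D(p)=1} = \tfrac{1}{2}(1+\chi_D(p))$ (valid when $p \nmid D$) and expanding the product over odd $p \le y$, I would write
\[
N \;=\; \frac{1}{2^{\pi(y)-1}} \sum_{\substack{m \mid P' \\ \mu^2(m)=1}} A(m), \qquad A(m) := \sum_{D \in \mathcal{G}} \chi_D(m).
\]
The main contribution comes from $m=1$: $A(1) = |\mathcal{G}|$. A direct Möbius inclusion–exclusion for the squarefreeness and the coprimality condition, combined with Mertens' theorem, gives
\[
|\mathcal{G}| \;=\; (1+o(1)) \cdot \frac{x}{\pi^2} \prod_{3 \le p \le y} \frac{p}{p+1} \;=\; (1+o(1)) \cdot \frac{x}{4 e^{\gamma} \log y}.
\]
Since $\log y \le 2\log\log x \ll (\log x)^2$ throughout the stated range, the main contribution to $N$ already exceeds $\bigl(\tfrac{1}{4}+o(1)\bigr) x/(2^{\pi(y)} (\log x)^2)$, because $1/(2e^{\gamma}) > 1/4$.

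For the error terms $m>1$, note that $m$ is squarefree and not a perfect square, and by quadratic reciprocity (using $D \equiv 1 \pmod 4$) the map $D \mapsto \chi_D(m)$ agrees with a nontrivial real Dirichlet character $\chi_m'(D)$ of conductor dividing $4m$. I would peel off the sieve conditions defining $\mathcal{G}$ by Möbius inversion, reducing $A(m)$ to a signed sum of character sums in arithmetic progressions, and bound these by combining Pólya–Vinogradov with Lemma~\ref{Orthogonality} (applied to $\sum_{D \in \mathcal{F}} \chi_D(m)$, so as to exploit the averaging over fundamental discriminants) to obtain $|A(m)| \ll \sqrt{x}\, m^{1/4} (\log x)^{O(1)}$. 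Using $\sum_{m \mid P',\,\mu^2(m)=1} m^{1/4} \le 2^{\pi(y)-1} (P')^{1/4}$ and $P' \le e^{y}$, the total error contribution to $N$ is $O(x^{3/4+o(1)})$, which is negligible compared with the main term throughout the range where the lemma's lower bound is nontrivial (namely $2^{\pi(y)}(\log x)^2 \le x$, essentially $y \ll \log x$; outside this range the claim is vacuous).

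The main obstacle I expect is obtaining the sharpening $|A(m)| \ll \sqrt{x}\, m^{1/4}(\log x)^{O(1)}$ uniformly in $m$: the direct Pólya–Vinogradov bound $|A(m)| \ll \sqrt{xm}\,(\log x)^{O(1)}$, once summed over the $2^{\pi(y)-1}$ divisors of $P'$, produces an error growing like $4^{\pi(y)}\sqrt{xP'}$ which cannot be controlled against the main term for $y$ anywhere near $\log x$. Reducing back to the averaged setting of Lemma~\ref{Orthogonality} via careful Möbius bookkeeping (while tracking the contribution of the conditions $D \equiv 1 \pmod 8$ and $\gcd(D,P')=1$) is therefore the essential technical point.
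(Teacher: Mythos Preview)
Your approach is genuinely different from the paper's and worth contrasting. The paper does not expand in characters at all: it \emph{constructs} the discriminants explicitly as $D=q_1q_2$ with $q_1<q_2$ primes in $(x^{1/3},x^{1/2})$, both $\equiv 1\pmod 8$, lying in the same ``bin'' $S_x(\mathbf v)$ determined by the Legendre-symbol pattern $\bigl((\tfrac{p_j}{q})\bigr)_{2<p_j\le y}$. Quadratic reciprocity then forces $\chi_D(p_j)=\bigl(\tfrac{p_j}{q_1}\bigr)\bigl(\tfrac{p_j}{q_2}\bigr)=1$. Since $\sum_{\mathbf v}|S_x(\mathbf v)|$ is simply the count of primes $q\equiv 1\pmod 8$ in $(x^{1/3},x^{1/2})$, Cauchy--Schwarz over the $2^{\pi(y)-1}$ bins gives $\sum_{\mathbf v}\binom{|S_x(\mathbf v)|}{2}\ge(\tfrac14+o(1))\,x/(2^{\pi(y)}(\log x)^2)$ with no character-sum error terms to control. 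Your main term is correct, and in fact sharper than the paper's by a factor $\asymp(\log x)^2/\log y$; what the construction buys is that it sidesteps the error analysis entirely.

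The gap in your proposal is exactly the point you flag: the bound $|A(m)|\ll\sqrt{x}\,m^{1/4}(\log x)^{O(1)}$ is not established, and invoking Lemma~\ref{Orthogonality} does not obviously help. That lemma controls $\sum_{D\in\mathcal F(x)}\chi_D(m)$, but to pass from $\mathcal F(x)$ to your set $\mathcal G$ you must impose $\gcd(D,P')=1$, and M\"obius inversion for this constraint reintroduces a sum over the $2^{\pi(y)-1-\omega(m)}$ divisors of $P'/m$. The $2^{\pi(y)}$ loss therefore resurfaces \emph{inside} each error term $A(m)$, not merely in the outer sum over $m$, and there is no evident cancellation mechanism that ``reducing back to the averaged setting of Lemma~\ref{Orthogonality}'' would exploit. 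If instead you use P\'olya--Vinogradov directly (which after the sieve bookkeeping yields $|A(m)|\ll\sqrt{xm}\,x^{o(1)}$), the total error becomes $\ll\sqrt{xP'}\,x^{o(1)}\asymp\sqrt{x}\,e^{y/2}x^{o(1)}$, and this beats the main term only for $y\le(1-\varepsilon)\log x$. That range suffices for the application in Theorem~\ref{NoZerosThm} (where $y=(\log x)/2$), so your argument can be completed there; but the claimed $m^{1/4}$ saving remains unjustified.
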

\begin{proof}
Let $\ell= \pi(y)-1$. Let $p_j$ denote the $j$-th prime, with $p_1=2$. For each $\mathbf{v}=(v_1, \dots, v_{\ell})\in \{-1, 1\}^{\ell}$ we define $S_x(\V)$ to be the set of prime numbers $q\equiv 1 \bmod 8$ with $q\in (x^{1/3}, x^{1/2})$ and such that 
$$ \left(\frac{p_j}{q}\right)=v_j \ \text{ for all } 2\leq j\leq \ell.$$ 
Since $q> p_j$ for all  $j\leq \ell$, then by the prime number theorem in arithmetic progressions we get 
\begin{equation}\label{Mean}
\sum_{\V\in \{-1, 1\}^{\ell}}\left|S_x(\V)\right|= \left(\frac{1}{2}+o(1)\right) \frac{\sqrt{x}}{\log x}.
\end{equation}
We now consider the set of fundamental discriminants 
$$ \mathcal{P}_2(x):= \big\{ D= q_1q_2 \  \text{ such that } q_1<q_2, \text{ and } (q_1, q_2) \in S_x(\V)^2 \text{ for some } \V \in  \{-1, 1\}^{\ell}\big\}.$$
Let $D=q_1q_2\in \mathcal{P}_2(x)$. Then $D\equiv 1\pmod 8$ and hence $\chi_D(2)=\left(\frac{D}{2}\right)=1$. Furthermore, it follows from the law of quadratic reciprocity that for any $2\leq j\leq \ell$ we have
$$ \chi_D(p_j)= \left(\frac{q_1}{p_j}\right)\left(\frac{q_2}{p_j}\right)= \left(\frac{p_j}{q_1}\right)^2=1, $$
since $q_1\equiv q_2 \equiv 1\pmod 4$, and $q_1, q_2\in S_x(\V)$ for some $\V \in  \{-1, 1\}^{\ell}$. Thus, by multiplicativity it follows that for all $D\in \mathcal{P}_2(x)$ we have $\chi_D(n)=1$ for all $n\leq y$.

To complete the proof, we need to establish the desired lower bound on $|\mathcal{P}_2(x)|$. Note that 
\begin{equation}\label{SetFundamental}
\left|\mathcal{P}_2(x)\right|= \sum_{\V\in \{-1, 1\}^{\ell}}\binom{\left|S_x(\V)\right|}{2} = \frac{1}{2} \sum_{\V\in \{-1, 1\}^{\ell}}\left|S_x(\V)\right|^2 +O\left(\frac{\sqrt{x}}{\log x}\right). 
\end{equation}
by \eqref{Mean}. Moreover, by the Cauchy-Schwarz inequality and \eqref{Mean} we obtain
$$
 \sum_{\V\in \{-1, 1\}^{\ell}}\left|S_x(\V)\right|^2 \geq \frac{1}{2^{\ell}} \left(\sum_{\V\in \{-1, 1\}^{\ell}}\left|S_x(\V)\right|\right)^2 \geq \left(\frac{1}{4}+o(1)\right)\frac{x}{2^{\ell} (\log x)^2}. 
$$
Inserting this bound in \eqref{SetFundamental} completes the proof.  
\end{proof}
\begin{proof}[Proof of Theorem \ref{NoZerosThm}]
By Lemma \ref{DiscriminantsChiD1} there are at least $x^{1-1/\log\log x}$ fundamental discriminants $0<D\leq x$ such that $\chi_D(n)=1$ for all $n\leq (\log x)/2 $. The result then follows from Lemma \ref{NoZerosLem}.
\end{proof}

\section*{Acknowledgements}
Y.L. is supported by a junior chair of the Institut Universitaire de France.  O.K. and M.M.  would like to thank the Max Planck Institute for
Mathematics (Bonn) for the hospitality and excellent working conditions during their work on this project. M.M. also acknowledges support by the Austrian Science Fund (FWF), stand-alone project P 33043  ``Character sums, L-functions and applications'' and by the Ministero della Istruzione e della Ricerca ``Young Researchers Program Rita Levi Montalcini''.  O.K. would like to thank Mittag-Leffler Institute for providing excellent working conditions during the final stages of preparation of the present manuscript. The authors are also grateful to the Heilbronn Focused Research grant for support.

\bibliographystyle{plain}
\bibliography{fekete}

\begin{thebibliography}{10}

\bibitem{Angelo-Xu}
R.~Angelo and M.~Xu.
\newblock On a {T}ur\'an conjecture and random multiplicative functions.
\newblock {\em Quart. J. of Math.}, 74(2):767--777, 2023.

\bibitem{Armon}
M.~V. Armon.
\newblock Averages of real character sums.
\newblock {\em J. Number Theory}, 77(2):209--226, 1999.

\bibitem{Aymone}
M.~Aymone, W.~Heap, and J.~Zhao.
\newblock Sign changes of the partial sums of a random multiplicative function.
\newblock {\em Bull. Lond. Math. Soc.}, 55(1):78--89, 2023.

\bibitem{BaMo}
R.~C. Baker and H.~L. Montgomery.
\newblock Oscillations of quadratic {$L$}-functions.
\newblock In {\em Analytic number theory ({A}llerton {P}ark, {IL}, 1989)},
  volume~85 of {\em Progr. Math.}, pages 23--40. Birkh\"{a}user Boston, Boston,
  MA, 1990.

\bibitem{BGHS}
W.~D. Banks, M.~Z. Garaev, D.~R. Heath‐Brown, and I.~E. Shparlinski.
\newblock Density of non‐residues in burgess‐type intervals and
  applications.
\newblock {\em Bull. Lond. Math. Soc.}, 40(1):88--96, 2008.

\bibitem{Bengo}
P.~Bengoechea.
\newblock Galois action on special theta values.
\newblock {\em J. Th\'{e}or. Nombres Bordeaux}, 28(2):347--360, 2016.

\bibitem{BGGK}
J.~Bober, L.~Goldmakher, A.~Granville, and D.~Koukoulopoulos.
\newblock The frequency and the structure of large character sums.
\newblock {\em J. Eur. Math. Soc. (JEMS)}, 20(7):1759--1818, 2018.

\bibitem{BC}
P.~Borwein and K.~S. Choi.
\newblock Explicit merit factor formulae for {F}ekete and {T}uryn polynomials.
\newblock {\em Trans. Amer. Math. Soc.}, 354(1):219--234, 2002.

\bibitem{borwein2001extremal}
P.~Borwein, K.~S. Choi, and S.~Yazdani.
\newblock An extremal property of {F}ekete polynomials.
\newblock {\em Proc. Amer. Soc.}, 129(1):19--27, 2001.

\bibitem{ErdBor}
P.~Borwein, T.~Erd\'{e}lyi, and G.~K\'{o}s.
\newblock Littlewood-type problems on {$[0,1]$}.
\newblock {\em Proc. London Math. Soc. (3)}, 79(1):22--46, 1999.

\bibitem{M-N-T-1}
S.~Chidambaram, J.~, Min\'{a}\v{c}, T.~Nguyen, and T.~Duy.
\newblock {F}ekete polynomials of principal {D}irichlet characters.
\newblock {\em https://arxiv.org/abs/2307.14896}, 2023.

\bibitem{Chow}
S.~Chowla.
\newblock Note on a {D}irichlet {L}- function.
\newblock {\em Acta Arith.}, 1(1):113--114, 1936.

\bibitem{CGPS}
B.~Conrey, A.~Granville, B.~Poonen, and K.~Soundararajan.
\newblock Zeros of {F}ekete polynomials.
\newblock {\em Ann. Inst. Fourier (Grenoble)}, 50(3):865--889, 2000.

\bibitem{ConreySound}
J.~B. Conrey and K.~Soundararajan.
\newblock Real zeros of quadratic {D}irichlet {$L$}-functions.
\newblock {\em Invent. Math.}, 150(1):1--44, 2002.

\bibitem{Dav}
H.~Davenport.
\newblock {\em Multiplicative number theory}, volume~74 of {\em Graduate Texts
  in Mathematics}.
\newblock Springer-Verlag, New York, third edition, 2000.
\newblock Revised and with a preface by Hugh L. Montgomery.

\bibitem{BMT}
R.~de~la Bret\`eche, M.~Munsch, and G.~Tenenbaum.
\newblock Small {G}\'{a}l sums and applications.
\newblock {\em J. Lond. Math. Soc. (2)}, 103(1):336--352, 2021.

\bibitem{Poonen}
A.~Dembo, B.~Poonen, Q.~Shao, and O.~Zeitouni.
\newblock Random polynomials having few or no real zeros.
\newblock {\em J. Amer. Math. Soc.}, 15(4):857--892, 2002.

\bibitem{ErdSurv}
T.~Erd\'{e}lyi.
\newblock Recent progress in the study of polynomials with constrained
  coefficients.
\newblock In {\em Trigonometric sums and their applications}, pages 29--69.
  Springer, Cham, 2020.

\bibitem{EO}
P.~Erd\H{o}s and A.~C. Offord.
\newblock On the number of real roots of a random algebraic equation.
\newblock {\em Proc. London Math. Soc. (3)}, 6:139--160, 1956.

\bibitem{FeketePolya}
M.~Fekete and G.~P{\'o}lya.
\newblock {\"U}ber ein problem von {L}aguerre.
\newblock {\em Rendiconti del Circolo Matematico di Palermo (1884-1940)},
  34(1):89--120, 1912.

\bibitem{G-H}
N.~Geis and G.~Hiary.
\newblock Counting sign changes of partial sums of random multiplicative
  functions.
\newblock {\em https://arxiv.org/abs/2311.16358}, 2023.

\bibitem{GrSo}
A.~Granville and K.~Soundararajan.
\newblock The distribution of values of {$L(1,\chi_d)$}.
\newblock {\em Geom. Funct. Anal.}, 13(5):992--1028, 2003.

\bibitem{G-S-max-character}
A.~Granville and K.~Soundararajan.
\newblock Large character sums: pretentious characters and the
  {P}\'{o}lya-{V}inogradov theorem.
\newblock {\em J. Amer. Math. Soc.}, 20(2):357--384, 2007.

\bibitem{Harper-short}
A.~Harper.
\newblock The typical size of character and zeta sums is $o(\sqrt{x})$.
\newblock {\em https://arxiv.org/abs/2301.04390}, 2023.

\bibitem{HarperMaks}
A.~Harper, A.~Nikeghbali, and M.~Radziwill.
\newblock A note on {H}elson's conjecture on moments of random multiplicative
  functions.
\newblock {\em Analytic Number Theory, Conf. in honor of Helmut Maier's 60th
  birthday}, 2015.

\bibitem{Harperhigh}
A.~J. Harper.
\newblock Moments of random multiplicative functions, {II}: {H}igh moments.
\newblock {\em Algebra Number Theory}, 13(10):2277--2321, 2019.

\bibitem{HBsieve}
D.~R. Heath-Brown.
\newblock A mean value estimate for real character sums.
\newblock {\em Acta Arith.}, 72:235--275, 1995.

\bibitem{Heilbr}
H.~Heilbronn.
\newblock On real characters.
\newblock {\em Acta Arith.}, 2:212--213, 1936.

\bibitem{Hoholdtmerit}
T.~Hoholdt and H.~E. Jensen.
\newblock Determination of the merit factor of {L}egendre sequences.
\newblock {\em IEEE Transactions on Information Theory}, 34(1):161--164, 1988.

\bibitem{AyeshaLamzouri}
A.~Hussain and Y.~Lamzouri.
\newblock The limiting distribution of legendre paths.
\newblock {\em Preprint, 20 pages. https://arxiv.org/abs/2304.13025}, 2023.

\bibitem{JKS}
J.~Jedwab, D.~J. Katz, and K-U. Schmidt.
\newblock Advances in the merit factor problem for binary sequences.
\newblock {\em Journal of Combinatorial Theory, Series A}, 120(4):882--906,
  2013.

\bibitem{jedwab2013littlewood}
J.~Jedwab, D.~J Katz, and K-U. Schmidt.
\newblock Littlewood polynomials with small ${L}^4$ norm.
\newblock {\em Advances in Mathematics}, 241:127--136, 2013.

\bibitem{JHH}
J.~M. Jensen, H.~E. Jensen, and T.~Hoholdt.
\newblock The merit factor of binary sequences related to difference sets.
\newblock {\em IEEE Transactions on Information Theory}, 37(3):617--626, 1991.

\bibitem{Jung}
J.~Jung.
\newblock On the sparsity of positive-definite automorphic forms within a
  family.
\newblock {\em J. Anal. Math.}, 129:105--138, 2016.
\newblock With an appendix by Jung and Sug Woo Shin.

\bibitem{Jut1}
M.~Jutila.
\newblock On character sums and class numbers.
\newblock {\em J. Number Theory}, 5:203--214, 1973.

\bibitem{Kalmynin}
A.~Kalmynin.
\newblock Quadratic characters with positive partial sums.
\newblock {\em Mathematika}, 69(1):90--99, 2023.

\bibitem{Kerr-Klurman}
B.~Kerr and O.~Klurman.
\newblock How negative can $\sum_{n\le x}\frac{f(n)}{n}$ be?
\newblock {\em https://arxiv.org/pdf/2203.09448.pdf}, 2022.

\bibitem{K-L-M}
O.~Klurman, Y.~Lamzouri, and M.~Munsch.
\newblock ${L}_q$ norms and {M}ahler measure of {F}ekete polynomials.
\newblock {\em https://arxiv.org/abs/2306.07156}, 2023.

\bibitem{Koukoul}
D.~Koukoulopoulos.
\newblock {\em The distribution of prime numbers}, volume 203 of {\em Graduate
  Studies in Mathematics}.
\newblock American Mathematical Society, Providence, RI, [2019] \copyright
  2019.

\bibitem{Lamzouri}
Y.~Lamzouri.
\newblock The distribution of large quadratic character sums and applications.
\newblock {\em To appear in Algebra Number Theory}, 2022.

\bibitem{LLR2}
Y.~Lamzouri, S.~Lester, and M.~Radziwi\l~\l.
\newblock Discrepancy bounds for the distribution of the {R}iemann
  zeta-function and applications.
\newblock {\em J. Anal. Math.}, 139(2):453--494, 2019.

\bibitem{LLR1}
Y.~Lamzouri, S.~Lester, and M.~Radziwill.
\newblock An effective universality theorem for the {R}iemann zeta function.
\newblock {\em Comment. Math. Helv.}, 93(4):709--736, 2018.

\bibitem{littlewood1968some}
J.~E. Littlewood.
\newblock {\em Some problems in real and complex analysis}.
\newblock DC Heath, 1968.

\bibitem{LO1}
J.~E. Littlewood and A.~C. Offord.
\newblock On the number of real roots of a random algebraic equation. {II}.
\newblock {\em Math. Proc. Cambridge Philos. Soc.}, 35(2):133--148, 1943.

\bibitem{LO2}
J.~E. Littlewood and A.~C. Offord.
\newblock On the number of real roots of a random algebraic equation. {III}.
\newblock {\em Rec. Math. [Mat. Sbornik] N.S.}, 12(54):277--286, 1943.

\bibitem{Lounumerics}
S.~R. Louboutin.
\newblock On {C}howla's hypothesis implying that {$L(s,\chi )\>0$} for {$s\>0$}
  for real characters {$\chi$}.
\newblock {\em Bull. Korean Math. Soc.}, 60(1):1--22, 2023.

\bibitem{Debrecen}
S.~R. Louboutin and M.~Munsch.
\newblock On positive real zeros of theta and {$L$}-functions associated with
  real, even and primitive characters.
\newblock {\em Publ. Math. Debrecen}, 83(4):643--665, 2013.

\bibitem{jnttheta}
S.~R. Louboutin and M.~Munsch.
\newblock The second and fourth moments of theta functions at their central
  point.
\newblock {\em J. Number Theory}, 133(4):1186--1193, 2013.

\bibitem{M-N-T}
J.~Min\'{a}\v{c}, T.~Nguyen, and T.~Duy.
\newblock On the arithmetic of generalized {F}ekete polynomials.
\newblock {\em To appear in Experimental {M}athematics}, 2022.

\bibitem{MR4490459}
J.~Min\'{a}\v{c}, T.~Nguyen, and T.~Duy.
\newblock Fekete polynomials, quadratic residues, and arithmetic.
\newblock {\em J. Number Theory}, 242:532--575, 2023.

\bibitem{Mont-large}
H.~L. Montgomery.
\newblock An exponential polynomial formed with the {L}egendre symbol.
\newblock {\em Acta Arith.}, 37:375--380, 1980.

\bibitem{Pol}
G.~P\'{o}lya.
\newblock Verschiedene {B}emerkung zur {Z}ahlentheorie.
\newblock {\em Jber. Deutsch. Math. Verein}, 28(3-4):31--40, 1919.

\bibitem{SarBach}
P.~Sarnak.
\newblock Letter to {B}achmat on positive definite {L}-functions.
\newblock {\em https://publications.ias.edu/sites/default/files/positive},
  2011.

\bibitem{Vaaler}
J.~D. Vaaler.
\newblock Some extremal functions in {F}ourier analysis.
\newblock {\em Bull. Amer. Math. Soc. (N.S.)}, 12(2):183--216, 1985.

\end{thebibliography}

\end{document}